\numberwithin{equation}{section}
\newtheorem{theorem}{Theorem}
\newtheorem{lemma}[theorem]{Lemma}
\newtheorem{proposition}[theorem]{Proposition}
\newtheorem{corollary}[theorem]{Corollary}
\theoremstyle{definition}
\theoremstyle{remark}
\newtheorem{remark}{Remark}
\numberwithin{theorem}{section}
\DeclareMathOperator*{\Tr}{Tr}
\DeclareMathOperator*{\Rea}{Re}
\DeclareMathOperator*{\Ima}{Im}
\newcommand{\N}[1]{ \left\| #1 \right\| }
\def\dH{\dot{\mathfrak{H}}}
\def\R{{\mathbb{R}}}
\def\g{{\gamma}}
\def\E{\mathcal{E}}
\def\L{\mathcal{L}}
\def\S{\mathfrak{S}}
\def\w{{w}}
\begin{document}
\title[Nonlinear Schr\"odinger system of infinitely many particles]{Global existence versus finite time blowup dichotomy for the system of nonlinear Schr\"odinger equations}

\author[Y. Hong]{Younghun Hong}
\email{younghun.hong@yonsei.ac.kr}
\address{Center for Mathematical Analysis \& Computation (CMAC), Yonsei University, Seoul 03722, Korea}

\author[S. Kwon]{Soonsik Kwon}
\email{soonsikk@kaist.edu}
\address{Department of Mathematical Sciences, Korea Advanced Institute of Science and Technology,
291 Daehak-ro Yuseong-gu, Daejeon 34141, South Korea}

\author[H. Yoon]{Haewon Yoon}
\email{hwyoon@ncts.ntu.edu.tw}
\address{National Center for Theoretical Sciences, No. 1 Sec. 4 Roosevelt Rd., National Taiwan University, Taipei, 10617, Taiwan}

\begin{abstract}
We construct an extremizer for the kinetic energy inequality (except the endpoint cases) developing the concentration-compactness technique for operator valued inequality in the formulation of the profile decomposition. Moreover, we investigate the properties of the extremizer, such as the system of Euler-Lagrange equations, regularity and summability. As an application, we study a dynamical consequence of a system of nonlinear Schr\"odinger equations with focusing cubic nonlinearities in three dimension when each wave function is restricted to be orthogonal. Using the critical element of the kinetic energy inequality, we establish a global existence versus finite time blowup dichotomy. This result extends the single particle result of   \cite{{HoRo07}} to infinitely many particles system.
\end{abstract}

\maketitle

\section{Introduction}
\subsection{Setup of the problem}
We consider the infinite system of coupled focusing 3d cubic nonlinear Schr\"odinger equations (CNLS)
\begin{equation}\label{CNLS}
\left\{
\begin{aligned}
i\partial_t\phi_j +\Delta \phi_j +\rho\phi_j&=0,&& j\in\mathbb{N},\\
\phi_j(0)&=\phi_{j,0},
\end{aligned}\right.
\end{equation}
where $\phi_j=\phi_j(t,x):\mathbb{R}\times\mathbb{R}^3\to \mathbb{C}$ and
$$\rho=\sum_{j=1}^\infty|\phi_j|^2$$
is the (total) density function. The following hypotheses are imposed on initial states:
\begin{enumerate}
\item initial states are mutually orthogonal, i.e.,
$$\langle \phi_{j,0}, \phi_{k,0}\rangle_{L^2(\mathbb{R}^3)}=\delta_{jk}\|\phi_{j,0}\|_{L^2(\mathbb{R}^3)}^2;$$
\item $\|\phi_{j,0}\|_{L^2(\mathbb{R}^3)}\to 0$ as $j\to\infty$;
\item initial data $\{\phi_{j,0}\}_{j=1}^\infty$ is normalized so that
$$\max_{j\in\mathbb{N}}\|\phi_{j,0}\|_{L^2(\mathbb{R}^3)}=1.$$ 
\end{enumerate}
Such a system arises in nonlinear optics to describe propagation of spatially incoherent light beams (see \cite{CCMS, MSCC, KSSEC, HLA} for instance). Similar systems with orthogonal initial data appear as the Hartree or the Hartree-Fock systems for the mean-field dynamics of fermions \cite{BGGM, EESY, FK, BPS, BJPSS}.

We define the Sobolev space $\vec{H}^1$ by the Hilbert space of sequences equipped with the norm
$$\left\|\{\phi_j\}_{j=1}^\infty\right\|_{\vec{H}^1}=\left\{\sum_{j=1}^\infty \|\phi_j\|_{H^1}^2\right\}^{1/2}.$$
The system \eqref{CNLS} is locally well-posed in $\vec{H}^1$, and its solution $\vec{\Phi}(t)=\{\phi_j(t)\}_{j=1}^\infty$ obeys the following conservation laws (see Section \ref{sec: local theory}):
\begin{itemize}
\item Orthogonality
\begin{equation}\label{orthogonality of waves}
\langle \phi_j(t), \phi_k(t)\rangle_{L^2(\mathbb{R}^3)}=\delta_{jk}\|\phi_{j,0}\|_{L^2(\mathbb{R}^3)}^2;
\end{equation}
\item Total mass
\begin{equation}\label{total mass}
N(\vec{\Phi})=\sum_{j=1}^\infty \|\phi_j\|_{L^2(\mathbb{R}^3)}^2;
\end{equation}
\item (Total) energy
\begin{equation}\label{energy}
\mathcal{E}(\vec{\Phi}):=\frac{1}{2}\sum_{j=1}^\infty \|\nabla\phi_j\|_{L^2(\mathbb{R}^3)}^2-\frac{1}{4}\sum_{j,k=1}^\infty\int_{\mathbb{R}^3}|\phi_j|^2|\phi_k|^2 dx.
\end{equation}
\end{itemize}
The total mass can be interpreted as the number of particles in that if $\phi_1,\cdots,\phi_N$ are $L^2$-normalized but if $\phi_j=0$ for all $j\geq N+1$, then their total mass equals to the number of particles.

Introducing an operator form by wave functions
\begin{equation}\label{spectral decomposition}
\gamma(t)=\sum_{j=1}^\infty |\phi_j(t)\rangle\langle\phi_j(t)|
\end{equation}
(see Section \ref{sec: Operator spaces} for the bra-ket notation), the system \eqref{CNLS} can be reformulated in a compact form. Indeed it is an operator equation of $ \gamma(t) $ as
\begin{equation}\label{CNLS'}
i\partial_t \gamma=[-\Delta{-}\rho_\gamma, \gamma],
\end{equation}
where $[A,B]=AB-BA$ and
$$\rho_{\gamma(t)}(x)=\gamma(t;x,x)$$
is the density function. Here, the two variable function $\gamma(x,x')$ denotes the integral kernel of the operator $\gamma$, i.e.,
$$(\gamma\phi)(x)=\int_{\mathbb{R}^3} \gamma(x,x')\phi(x')dx'.$$
In terms of wave functions, we have $\rho_{\gamma(t)}(x)= \sum_{j=1}^\infty  \overline{\phi_j(t,x)}\phi_j(t,x)$. Conversely, if $\gamma(t)$ is a compact self-adjoint operator on $L^2(\mathbb{R}^3)$ and it is a solution to the equation \eqref{CNLS'}, then it has a spectral decomposition of the form \eqref{spectral decomposition} and the sequence $\vec{\Phi}(t)=\{\phi_j(t)\}_{j=1}^\infty$ solves the system \eqref{CNLS}. Therefore, the two formulations \eqref{CNLS} and \eqref{CNLS'} are essentially equivalent. Later, either formulation will be taken depending on what is more convenient for our exposition.

By \eqref{spectral decomposition}, the norm of an operator 
$$\|\gamma\|_{\mathfrak{H}^1}:=\textup{Tr}|\sqrt{1-\Delta}\gamma\sqrt{1-\Delta}|,$$
where $\textup{Tr}|\cdot|$ is the trace norm. It corresponds to the $\vec{H}^1$-norm of the sequence $\{\phi_j\}_{j=1}^\infty$. The equation \eqref{CNLS'} is then locally well-posed in the operator space $\mathfrak{H}^1$, where $\mathfrak{H}^1$ is the Banach space of compact self-adjoint operators equipped with the norm $\|\cdot\|_{\mathfrak{H}^1}$. The assumptions on initial data and the conservation laws for the system \eqref{CNLS} can be translated as:
\begin{equation}\label{conservation laws}
\left\{
\begin{aligned}
&\textup{compactness, }\|\gamma_0\|_{\textup{op}}=1&&\textup{(initial data)}\\
&\textup{spectrum of }\gamma&&\textup{(orthogonality)}\\
&\mathcal{E}(\gamma)=\frac{1}{2}\textup{Tr}\sqrt{-\Delta}\gamma\sqrt{-\Delta}-\frac{1}{4}\int_{\mathbb{R}^3}(\rho_\gamma)^2 dx&&\textup{(energy)}\\
&N=\textup{Tr}(\gamma)&&\textup{(total mass)},
\end{aligned}\right.
\end{equation}
where $\|\cdot\|_{\textup{op}}$ denotes the operator norm.

The purpose of this article is to provide a precise description on a global versus blow-up dichotomy for the system (CNLS). In the single-particle case, i.e., for the focusing 3d cubic nonlinear Schr\"odinger equation (NLS)
$$i\partial_t u+\Delta u+|u|^2u=0,$$
Holmer and Roudenko \cite{HoRo07} proved that such a dichotomy is given in terms of the ground state $Q$ for the elliptic equation
$$-\Delta u +u-|u|^2u=0.$$
This result heavily relies on the fact that the best constant for the Gagliardo-Nirenberg inequality
\begin{equation}\label{GN inequality}
\|u\|_{L^4(\mathbb{R}^3)}\leq C_{GN}\|u\|_{L^2(\mathbb{R}^3)}^{1/4}\|\nabla u\|_{L^2(\mathbb{R}^3)}^{3/4}
\end{equation}
is attained at the ground state. Their dichotomy theorem provides the first step to give a sharp criteria for scattering (in other words, to develop a large data scattering theory) \cite{HoRo08, DHR} by the concentration-compactness approach \cite{CKSTT, KM1, KM2}.

The main result of this paper asserts that for the system (CNLS), the \textit{kinetic energy inequality} (see Theorem \ref{theorem: LT inequality} below) plays the role of the Gagliardo-Nirenberg inequality \eqref{GN inequality}. Precisely, we prove existence of an extremizer for the kinetic energy inequality, and then the global versus blow-up dichotomy for the system \eqref{CNLS} (equivalently \eqref{CNLS'}) is obtained using the properties of the critical element. This result is completely analogous to but also extends the single particle case \cite{HoRo07}.

\subsection{Extremizer for kinetic energy inequalities}
Consider a wave function of $ N$-particle system given by a Slater determinant $\frac{1}{\sqrt{N!}}\textup{det}\{\phi_j(x_k)\}_{j,k=1}^N$, where $\{\phi_j\}_{j=1}^N$ is a set of $L^2$-orthonormal functions in $H^1(\mathbb{R}^d)$. The fundamental kinetic energy inequality states that
\begin{equation}\label{LT inequality, even more special case}
\Big\|\sum_{j=1}^N|\phi_j|^2\Big\|_{L^{\frac{d+2}{d}}(\mathbb{R}^d)}^{\frac{d+2}{d}}\lesssim\sum_{j=1}^N\|\nabla \phi_j\|_{L^2(\mathbb{R}^d)}^2.
\end{equation}
This inequality is a generalization of the Gagliardo-Nirenberg inequality
$$\|u\|_{L^{\frac{2(d+2)}{d}}(\mathbb{R}^d)}^{\frac{2(d+2)}{d}}\lesssim\|u\|_{L^2(\mathbb{R}^d)}^{\frac{4}{d}}\|\nabla u\|_{L^2(\mathbb{R}^d)}^2.$$
An important feature of the kinetic energy inequality is that it captures a gain of summability from orthogonality of each state. Indeed, if we estimate the left hand side of \eqref{LT inequality, even more special case} simply by the triangle inequality and the Gagliardo-Nirenberg inequality, then its bound is $O(N^{\frac{d+2}{d}})$, while the bound in \eqref{LT inequality, even more special case} is $O(N)$.

The kinetic energy inequality has been first introduced in the celebrated article by Lieb and Thirring \cite{LiebThirring1}, where the authors established a simpler proof of stability of matter, as well as for a better lower bound constant, in the earlier work by Dyson and Lenard \cite{DysonLenard1, DysonLenard2}. The original proof of the kinetic energy inequality in \cite{LiebThirring1} is done indirectly via its dual formulation, now named the \textit{Lieb-Thirring inequality}, that is, an estimate on the sum of the negative eigenvalues of Schr\"odinger operators. There is a huge literature on these inequalities (see the monograph by Lieb and Seiringer  \cite{LiebSeringer}). Recently, direct proofs of the kinetic energy inequality are given by different approaches, for instance, by Rumin \cite{Rum11}, by Lundholm, Portmann and Solovej \cite{LPS} and by Sabin \cite{Sabin}.

Putting  \eqref{LT inequality, even more special case} into the density matrix formalism as well as without fixing the (endpoint) norm on the left hand side, the kinetic energy inequality can be extended as an inequality for operators as follows. In Section \ref{sec: kinetic energy inequality}, we give a proof of it following Rumin \cite{Rum11} for completeness of the paper.

\begin{theorem}[Kinetic energy inequality{\footnote{In many literatures, \eqref{eq:LT} is also called the Lieb-Thirring inequality as \eqref{eq:LT} is a dual estimate of it. In this article, following the terminology in \cite{Seiringer}, we call \eqref{eq:LT} the kinetic energy inequality merely to be consistent and to avoid the confusion.}}]\label{theorem: LT inequality}
Suppose that 
\begin{equation}
\left\{
\begin{aligned}
\tfrac{d+2}{d}&\leq q<\infty &&\textup{when }d=1,2,\\
\tfrac{d+2}{d}&\leq q\leq\tfrac{d}{d-2} &&\textup{when }d\geq 3,
\end{aligned}
\right.
\end{equation}
and let $\theta=\frac{d}{2q'}\in[\frac{d}{d+2},1]$. There exists a (optimal) constant $C_{KE}>0$ such that if $\gamma$ is bounded, self-adjoint on $L^2(\mathbb{R}^d)$ and $\sqrt{-\Delta}\gamma\sqrt{-\Delta}$ is of trace-class, then
\begin{equation}\label{eq:LT}
	\N{\rho_\gamma}_{L^q(\mathbb{R}^d)}\le C_{KE} \|\gamma\|_{\textup{op}}^{1-\theta}\|\gamma\|_{\dot{\mathfrak{H}}^1}^{\theta},
\end{equation}
where
$$\|\gamma\|_{\dot{\mathfrak{H}}^1}:=\textup{Tr}|\sqrt{-\Delta}\gamma\sqrt{-\Delta}|.$$
\end{theorem}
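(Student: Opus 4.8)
The plan is to reproduce Rumin's direct argument. I would first restrict to $\gamma\ge 0$, which is the case arising in the application, and by a standard truncation take $\gamma$ of finite rank; fix a spectral decomposition $\gamma=\sum_n\lambda_n|u_n\rangle\langle u_n|$ with $\{u_n\}$ orthonormal and $0\le\lambda_n\le\|\gamma\|_{\textup{op}}$, so that $\rho_\gamma=\sum_n\lambda_n|u_n|^2$. The one elementary ingredient is a \emph{frequency splitting}: for $R>0$ let $P_{<R},P_{\ge R}$ be the Fourier projections onto $\{|\xi|<R\}$ and $\{|\xi|\ge R\}$, and set $g_R:=\rho_{P_{\ge R}\gamma P_{\ge R}}$. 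Decomposing each $u_n=P_{<R}u_n+P_{\ge R}u_n$ and using $|a+b|^2\le 2|a|^2+2|b|^2$ yields, pointwise and for every (possibly $x$-dependent) $R$,
\[
\rho_\gamma(x)\ \le\ 2\,\rho_{P_{<R}\gamma P_{<R}}(x)+2\,g_R(x).
\]
Writing $(P_{<R}u_n)(x)=\langle \mathcal F^{-1}\mathbf 1_{\{|\xi|<R\}}(x-\cdot),\,u_n\rangle$ and applying Bessel's inequality to the orthonormal family $\{u_n\}$ bounds the low-frequency term by $\|\gamma\|_{\textup{op}}\,\|\mathbf 1_{\{|\xi|<R\}}\|_{L^2}^2/(2\pi)^d=c_d\,\|\gamma\|_{\textup{op}}R^d$. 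For the high-frequency part I record two consequences of $\|P_{\ge\mu}v\|_{L^2}\le\mu^{-1}\|\nabla P_{\ge\mu}v\|_{L^2}$: first $\int g_\mu\,dx=\Tr(\gamma P_{\ge\mu})\le\mu^{-2}\,\|\gamma\|_{\dH^1}$; and, combining this with the Gagliardo--Nirenberg inequality applied to each $P_{\ge\mu}u_n$ and then summing by the triangle inequality in $L^p$, $\ \|g_\mu\|_{L^p}\le C_d\,\mu^{-2(1-\frac{d}{2p'})}\,\|\gamma\|_{\dH^1}\ $ for all admissible $p$ ($1\le p<\infty$ if $d\le 2$, $1\le p\le\frac{d}{d-2}$ if $d\ge 3$).

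The crux is the endpoint $q=\frac{d+2}{d}$. I would insert $R=R_0(x):=\big(\rho_\gamma(x)/(4c_d\|\gamma\|_{\textup{op}})\big)^{1/d}$ into the pointwise bound: since then $c_d\|\gamma\|_{\textup{op}}R_0(x)^d=\tfrac14\rho_\gamma(x)$ and $\mu\mapsto g_\mu(x)$ is nonincreasing, every $\mu\le R_0(x)$ obeys $g_\mu(x)\ge\tfrac38\rho_\gamma(x)$, whence
\[
\int_0^\infty \mu\, g_\mu(x)\,d\mu\ \ge\ \tfrac38\,\rho_\gamma(x)\int_0^{R_0(x)}\mu\,d\mu\ =\ \tfrac{3}{16}\,\rho_\gamma(x)\,R_0(x)^2\ =\ c_d'\,\|\gamma\|_{\textup{op}}^{-2/d}\,\rho_\gamma(x)^{1+2/d}.
\]
Integrating this pointwise inequality in $x$ (Tonelli), and using the functional--calculus identity $\int_0^\infty\mu\,P_{\ge\mu}\,d\mu=\tfrac12(-\Delta)$ to turn $\int_0^\infty\mu\,\Tr(\gamma P_{\ge\mu})\,d\mu$ into $\tfrac12\|\gamma\|_{\dH^1}$, gives $\|\rho_\gamma\|_{L^{(d+2)/d}}^{(d+2)/d}\le C_d\,\|\gamma\|_{\textup{op}}^{2/d}\,\|\gamma\|_{\dH^1}$, which is exactly \eqref{eq:LT} at $q=\frac{d+2}{d}$ (where $\theta=\frac{d}{d+2}$).

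For $q>\frac{d+2}{d}$ I would pass to the distribution function. Taking $R=R_t$ with $c_d\|\gamma\|_{\textup{op}}R_t^d=\tfrac12 t$ shows $\{\rho_\gamma>t\}\subset\{g_{R_t}>\tfrac14 t\}$, so Chebyshev's inequality together with the $L^p$ bound for $g_{R_t}$ gives, for each admissible $p$,
\[
\big|\{\rho_\gamma>t\}\big|\ \le\ C\,\|\gamma\|_{\textup{op}}^{\,\frac{2p}{d}-p+1}\,\|\gamma\|_{\dH^1}^{\,p}\;t^{-(1+2p/d)}.
\]
Inserting this into $\int\rho_\gamma^q\,dx=q\int_0^\infty t^{q-1}\,|\{\rho_\gamma>t\}|\,dt$, splitting at a height $T$, using $p=1$ on $(0,T]$ and some $p=p_+>\tfrac{d}{2}(q-1)$ on $[T,\infty)$ — such $p_+$ being admissible precisely when $q<\frac{d}{d-2}$ for $d\ge 3$, and for all $q<\infty$ when $d\le 2$ — and optimizing over $T$ yields \eqref{eq:LT}; the powers of $\|\gamma\|_{\textup{op}}$ and $\|\gamma\|_{\dH^1}$ must emerge as $1-\theta$ and $\theta$ with $\theta=\frac{d}{2q'}$ because the entire construction is invariant under $\gamma\mapsto c\gamma$ and under $L^2$-dilations. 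Finally, the single remaining exponent $q=\frac{d}{d-2}$ for $d\ge 3$ ($\theta=1$) follows immediately from the triangle inequality and the Sobolev embedding $\dot H^1\hookrightarrow L^{2d/(d-2)}$: $\|\rho_\gamma\|_{L^{d/(d-2)}}\le\sum_n\lambda_n\|u_n\|_{L^{2d/(d-2)}}^2\lesssim\sum_n\lambda_n\|\nabla u_n\|_{L^2}^2=\|\gamma\|_{\dH^1}$.

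I expect the genuine difficulty to be the endpoint pointwise inequality: one must see that the crude low-frequency bound $\rho_{P_{<R}\gamma P_{<R}}\lesssim\|\gamma\|_{\textup{op}}R^d$ — apparently far too weak to control anything globally — reproduces the sharp power $1+2/d$ once it is tested against the measure $\mu\,d\mu$ and combined with the monotonicity of $g_\mu$, and that the resulting $\mu$-integral collapses to the kinetic energy $\Tr(-\Delta\gamma)$ by functional calculus. The rest is layer-cake bookkeeping; the only subtlety there is that for $d\ge 3$ this scheme stops just short of $q=\frac{d}{d-2}$, which has to be recovered from Sobolev embedding separately, while for $d\le 2$ the $p_+$-part of the argument reaches every finite $q$.
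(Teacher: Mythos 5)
Your proposal is correct and, like the paper's proof of Theorem \ref{theorem: general LT inequality}, it is a Rumin-style direct argument built around a high/low frequency split of the density; but you organize the $q$-dependence differently. The paper averages $\rho_{P_e\gamma P_e}$ over a small ball, uses the $\mathfrak{S}^2$ triangle inequality to obtain the $q$-dependent pointwise lower bound $\rho_e(x)\geq\bigl(\sqrt{\rho(x)}-c_1 e^{\frac{1}{2(q-1)}}\bigr)_+^2$, with the low-frequency tail estimated via $\||\nabla|^\beta\gamma|\nabla|^\beta\|_{\textup{op}}$ for $\beta=\tfrac d2-\tfrac1{q-1}$, and then inserts this into the layer-cake representation of $\Tr(-\Delta)\gamma$; since $\beta$ ranges over $[0,1]$ as $q$ ranges over the admissible interval, every $q$, including both endpoints, is handled in a single computation. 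Your version fixes $\beta=0$ in the low-frequency bound (pure Bessel, $\rho_{P_{<R}\gamma P_{<R}}\leq c_d\|\gamma\|_{\textup{op}}R^d$), which already gives the lower endpoint $q=\tfrac{d+2}{d}$ via $\int_0^\infty\mu\,g_\mu(x)\,d\mu$, and then you recover interior $q$ by Chebyshev on the sub-level sets against the crude bound $\|g_\mu\|_{L^p}\lesssim\mu^{-2(1-d/(2p'))}\|\gamma\|_{\dH^1}$, and the upper endpoint $q=\tfrac d{d-2}$ separately from Sobolev. Both are valid; your low-frequency estimate is a bit more elementary, at the cost of three cases where the paper has one. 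Two small points to fix: the assertion that $\mu\mapsto g_\mu(x)$ is pointwise nonincreasing is false in general (removing low frequencies can \emph{increase} a pointwise value through loss of cancellation), but it is also unnecessary — for $\mu\le R_0(x)$ the bound $g_\mu(x)\gtrsim\rho_\gamma(x)$ follows directly from the pointwise decomposition combined with the monotonicity of the Bessel bound $c_d\|\gamma\|_{\textup{op}}\mu^d$, so the argument stands; and the statement allows general self-adjoint $\gamma$, so one should preface the argument with the standard reduction $\gamma=\gamma_+-\gamma_-$ together with $\|\gamma_\pm\|_{\textup{op}}\le\|\gamma\|_{\textup{op}}$, $\|\gamma_\pm\|_{\dH^1}\le\|\gamma\|_{\dH^1}$, as the paper does.
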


{We are ready to state our first main theorem. We prove the existence of an extremizer for the kinetic energy inequality, derive the Euler-Lagrange system and its qualitative properties. We collect statements regarding to the exremizer in the following theorem. }

\begin{theorem}[Extremizer for the kinetic energy inequality \eqref{eq:LT}]\label{main theorem}
Suppose that
\begin{equation}\label{admissible q}
\left\{
\begin{aligned}
\tfrac{d+2}{d}&< q<\infty &&\textup{when }d=1,2,\\
\tfrac{d+2}{d}&<q<\tfrac{d}{d-2} &&\textup{when }d\geq 3,
\end{aligned}
\right.
\end{equation}
and let $\theta=\frac{d}{2q'}\in (\frac{d}{d+2},1)$. Then, the following hold.
\begin{enumerate}
\item (Existence) There exists an extremizer $\mathcal{Q}$ for the kinetic energy inequality \eqref{eq:LT}, that is,
\begin{equation}\label{sharp kinetic energy inequality}
\|\rho_{\mathcal{Q}}\|_{L^q(\mathbb{R}^d)}=C_{KE}\|\mathcal{Q}\|_{\textup{op}}^{1-\theta}\|\mathcal{Q}\|_{\dot{\mathfrak{H}}^1}^\theta,
\end{equation}
such that $\mathcal{Q}\geq0$, $\|\mathcal{Q}\|_{\textup{op}}=1$ and $\|\mathcal{Q}\|_{\dot{\mathfrak{H}}^1}=\theta\|\rho_{\mathcal{Q}}\|_{L^q(\mathbb{R}^d)}^q$.
\item (Structure) We denote by $\{-\mu_j\}_{j=1}^{J_-}$ the set of negative eigenvalues (counting multiplicities) for the Schr\"odinger operator $(-\Delta-\rho_{\mathcal{Q}}^{q-1})$ with the ordering
$$-\mu_1\leq -\mu_2\leq-\mu_3\leq\cdots <0,$$
and let $\phi_j^-$ be the $L^2$-normalized eigenfunction corresponding to the eigenvalue $-\mu_j$. Then, either 
$$\mathcal{Q}=\sum_{j=1}^{J_-}|\phi_j^-\rangle\langle\phi_j^-|$$
or there exists an $L^2$-orthogonal set $\{\phi_k^0\}_{k=1}^{K_0}\subset\textup{Ker}(-\Delta-\rho_{\mathcal{Q}}^{q-1})$ such that 
\begin{equation}\label{spectral decomposition for Q}
\mathcal{Q}=\sum_{j=1}^{J_-}|\phi_j^-\rangle\langle\phi_j^-|+\sum_{k=1}^{K_0}|\phi_k^0\rangle\langle\phi_k^0|.
\end{equation}
Here, $J_-, K_0 \in \mathbb{N} \cup \{\infty\}$.
\item (Euler-Lagrange equation) Each $\phi_j^-$ (resp., $\phi_k^0$ if it exists) is an $H^1$-weak solution to
\begin{equation}\label{EL}
(-\Delta-\rho_{\mathcal{Q}}^{q-1})\phi_j^-=-\mu_j\phi_j^-\ \left(\textup{resp., }(-\Delta-\rho_{\mathcal{Q}}^{q-1})\phi_k^0=0\right).
\end{equation}
\item (Regularity)
$$\textup{Tr}(-\Delta)\mathcal{Q}(-\Delta)<\infty.$$
\item (Pohozaev identities)
\begin{equation}\label{Po}
\|\mathcal{Q}\|_{\dot{\mathfrak{H}}^1}=\frac{\theta}{1-\theta}\sum_{j=1}^{J_-} \mu_j;\quad\|\rho_{\mathcal{Q}}\|_{L^q(\mathbb{R}^d)}^q=\frac{1}{1-\theta}\sum_{j=1}^{J_-} \mu_j.
\end{equation}
\item (Summability) If we further assume that $d\geq 3$ and $q>\frac{d^2+2d+4}{d^2}$, then $J_-$ is finite and $\textup{Tr}\mathcal{Q}<\infty$.
\end{enumerate}
\end{theorem}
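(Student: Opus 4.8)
The plan is to produce the extremizer by the concentration--compactness method in the profile--decomposition formulation, and then to read off items (2)--(6) from the associated Euler--Lagrange problem.

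\textbf{Existence (item (1)).} Write \eqref{eq:LT} as the lower bound $\mathcal{J}(\gamma):=\|\gamma\|_{\textup{op}}^{1-\theta}\|\gamma\|_{\dot{\mathfrak{H}}^1}^{\theta}/\|\rho_\gamma\|_{L^q(\mathbb{R}^d)}\geq C_{KE}^{-1}$ and minimize $\mathcal{J}$; without loss of optimality one restricts to $\gamma\geq 0$. The functional $\mathcal{J}$ is invariant under the two scaling symmetries $\gamma\mapsto c\gamma$ ($c>0$) and the $L^2$--unitary dilation $|\phi\rangle\langle\phi|\mapsto|\lambda^{d/2}\phi(\lambda\,\cdot)\rangle\langle\lambda^{d/2}\phi(\lambda\,\cdot)|$, so a minimizing sequence may be normalized to $\|\gamma_n\|_{\textup{op}}=\|\gamma_n\|_{\dot{\mathfrak{H}}^1}=1$ with $\|\rho_{\gamma_n}\|_{L^q}\to C_{KE}$; with these two normalizations the only remaining noncompactness is translation. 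The key step --- and the main technical point --- is an operator--valued profile decomposition: along a subsequence
\[
\gamma_n=\sum_{j\geq 1}\tau_{x_n^{(j)}}\gamma^{(j)}\tau_{x_n^{(j)}}^{*}+R_n^{(J)},\qquad 0\leq\gamma^{(j)},\quad |x_n^{(j)}-x_n^{(k)}|\to\infty\ (j\neq k),
\]
with $\|\rho_{R_n^{(J)}}\|_{L^q}\to 0$ as $J,n\to\infty$. Translation invariance of $-\Delta$ gives $\sum_j\|\gamma^{(j)}\|_{\dot{\mathfrak{H}}^1}\leq 1$, weak lower semicontinuity gives $\sup_j\|\gamma^{(j)}\|_{\textup{op}}\leq 1$, and the asymptotic spatial separation gives the decoupling $C_{KE}^q=\sum_j\|\rho_{\gamma^{(j)}}\|_{L^q}^q$. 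Plugging \eqref{eq:LT} into each profile and using the \emph{strict} superadditivity $\sum_j a_j^{\theta q}\leq\big(\sum_j a_j\big)^{\theta q}$ --- valid since $\theta q=\tfrac{d(q-1)}{2}>1$ precisely because $q>\tfrac{d+2}{d}$ (this is exactly why the endpoint must be excluded) --- forces equality throughout: a single profile survives, and $\mathcal{Q}:=\gamma^{(1)}\geq 0$ satisfies $\|\mathcal{Q}\|_{\textup{op}}=\|\mathcal{Q}\|_{\dot{\mathfrak{H}}^1}=1$, $\|\rho_{\mathcal{Q}}\|_{L^q}=C_{KE}$, so $\mathcal{Q}$ is an extremizer (and the normalized sequence converges strongly after translation). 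The identity $\|\mathcal{Q}\|_{\dot{\mathfrak{H}}^1}=\theta\|\rho_{\mathcal{Q}}\|_{L^q}^q$ in item (1) will drop out of the Pohozaev step below.

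\textbf{Structure and Euler--Lagrange system (items (2), (3)).} Since $0\leq\mathcal{Q}\leq 1$ and $\mathcal{Q}$ minimizes $\mathcal{J}$, a short comparison shows that $\mathcal{Q}$ minimizes $\textup{Tr}((-\Delta)\gamma)$ over $\{\gamma:0\leq\gamma\leq 1,\ \int_{\mathbb{R}^d}\rho_\gamma^q\,dx=\int_{\mathbb{R}^d}\rho_{\mathcal{Q}}^q\,dx\}$. The constraint $0\leq\gamma\leq 1$ is convex, and the first--variation condition reads: there is $\lambda>0$ with $\textup{Tr}((-\Delta-\lambda q\,\rho_{\mathcal{Q}}^{q-1})\,\delta)\geq 0$ for every admissible direction $\delta$. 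The operator bathtub principle (to minimize $\textup{Tr}(H\gamma)$ over $0\leq\gamma\leq 1$ one fills the negative spectral subspace of $H$ and is free on $\textup{Ker}\,H$) then identifies $\mathcal{Q}$ --- after rescaling $\lambda q$ to $1$ by the dilation symmetry, so that its Euler--Lagrange operator is exactly $H=-\Delta-\rho_{\mathcal{Q}}^{q-1}$ --- with the spectral projection of $H$ onto $(-\infty,0)$ together with a projection onto a subspace of $\textup{Ker}\,H$; this is \eqref{spectral decomposition for Q}. The corresponding eigenfunctions solve \eqref{EL} as $H^1$--weak solutions, since $\rho_{\mathcal{Q}}\in L^q$ gives $\rho_{\mathcal{Q}}^{q-1}\in L^{q'}$ with $q'>\tfrac d2$ (the subcritical regime), so $\rho_{\mathcal{Q}}^{q-1}\phi\in H^{-1}$ whenever $\phi\in H^1$. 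The delicate point here is to make the bathtub argument rigorous for the operator constraint $0\leq\gamma\leq 1$ in infinite dimensions, in particular to pin down the kernel contribution as a projection.

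\textbf{Regularity and Pohozaev identities (items (4), (5)).} Write $\mathcal{Q}=\sum_j|\phi_j\rangle\langle\phi_j|$, the $\phi_j$ being the $\phi_j^-$ and $\phi_k^0$ of item (2), with eigenvalues $\nu_j\in\{\mu_j\}\cup\{0\}$. Testing \eqref{EL} against $\phi_j$ and summing gives $\sum_j\mu_j\leq\int_{\mathbb{R}^d}\rho_{\mathcal{Q}}^q\,dx<\infty$, and $\mu_1\leq\|\rho_{\mathcal{Q}}\|_{L^q}^{q-1}\|\phi_1\|_{L^{2q}}^2<\infty$ by Sobolev embedding (using $q\leq\tfrac{d}{d-2}$ when $d\geq 3$), so $\sum_j\mu_j^2\leq\mu_1\sum_j\mu_j<\infty$; and from \eqref{EL} one gets, once $\rho_{\mathcal{Q}}\in L^{2q-1}$ is known,
\[
\textup{Tr}((-\Delta)\mathcal{Q}(-\Delta))=\sum_j\|(-\Delta)\phi_j\|_{L^2}^2\leq 2\int_{\mathbb{R}^d}\rho_{\mathcal{Q}}^{2q-1}\,dx+2\sum_j\mu_j^2<\infty .
\]
Thus item (4) reduces to the single claim $\rho_{\mathcal{Q}}\in L^{2q-1}(\mathbb{R}^d)$, i.e. to improving the a priori bound $\rho_{\mathcal{Q}}\in L^q$; this is an elliptic bootstrap on the density, using $\phi_j=(-\Delta+\nu_j)^{-1}(\rho_{\mathcal{Q}}^{q-1}\phi_j)$ (with a harmless spectral shift on the kernel part), Green--function estimates for $(-\Delta+\nu)^{-1}$, and Cauchy--Schwarz inside the $j$--sum, to upgrade $\rho_{\mathcal{Q}}\in L^p\Rightarrow\rho_{\mathcal{Q}}\in L^{p'}$ for some $p'>p$, iterated. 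For \eqref{Po}, test \eqref{EL} against $\phi_j$ and against $x\cdot\nabla\phi_j$ and sum over $j$ (integrations by parts justified by item (4)):
\[
\|\mathcal{Q}\|_{\dot{\mathfrak{H}}^1}-\|\rho_{\mathcal{Q}}\|_{L^q}^q=-\sum_j\mu_j,\qquad \tfrac{2-d}{2}\|\mathcal{Q}\|_{\dot{\mathfrak{H}}^1}+\tfrac{d}{2q}\|\rho_{\mathcal{Q}}\|_{L^q}^q=\tfrac d2\sum_j\mu_j .
\]
Eliminating $\sum_j\mu_j$ gives $\|\mathcal{Q}\|_{\dot{\mathfrak{H}}^1}=\tfrac{d(q-1)}{2q}\|\rho_{\mathcal{Q}}\|_{L^q}^q=\theta\|\rho_{\mathcal{Q}}\|_{L^q}^q$ (completing item (1)), and back--substituting yields the two identities in \eqref{Po}.

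\textbf{Summability (item (6)) and main obstacles.} Here $q>\tfrac{d^2+2d+4}{d^2}$ is exactly the condition $\tfrac{d(q-1)}{2}>\tfrac{d+2}{d}$. Apply \eqref{eq:LT} to $\mathcal{Q}$ at the endpoint exponent $\tfrac{d+2}{d}$ (allowed since $\|\mathcal{Q}\|_{\textup{op}}=1$ and $\|\mathcal{Q}\|_{\dot{\mathfrak{H}}^1}<\infty$) to get $\rho_{\mathcal{Q}}\in L^{(d+2)/d}$; interpolating with $\rho_{\mathcal{Q}}\in L^q$ and using $\tfrac{d+2}{d}\leq\tfrac{d(q-1)}{2}\leq q$ gives $\rho_{\mathcal{Q}}\in L^{d(q-1)/2}$, i.e. $\rho_{\mathcal{Q}}^{q-1}\in L^{d/2}$. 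The Cwikel--Lieb--Rozenblum bound then gives $J_-\leq C_d\int_{\mathbb{R}^d}(\rho_{\mathcal{Q}}^{q-1})^{d/2}\,dx<\infty$, and since $\rho_{\mathcal{Q}}^{q-1}\in L^{d/2}$ makes $(-\Delta)^{-1/2}\rho_{\mathcal{Q}}^{q-1}(-\Delta)^{-1/2}$ compact, $K_0=\dim\textup{Ker}(-\Delta-\rho_{\mathcal{Q}}^{q-1})<\infty$; hence $\textup{Tr}\,\mathcal{Q}=J_-+K_0<\infty$. Of the whole argument, the two places I expect genuine difficulty are the operator--valued profile decomposition with the decoupling of $\|\cdot\|_{\textup{op}}$, $\|\cdot\|_{\dot{\mathfrak{H}}^1}$ and $\|\rho_\cdot\|_{L^q}$ in the existence step, and the density bootstrap in item (4) when $\mathcal{Q}$ fails to be trace--class, where the possible accumulation of the $\mu_j$ at $0$ (i.e. $J_-=\infty$) must be absorbed into the $j$--sum rather than estimated eigenvalue by eigenvalue.
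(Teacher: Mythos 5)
Your overall architecture coincides with the paper's: existence via an operator profile decomposition plus strict superadditivity of $a\mapsto a^{q\theta}$ (the paper's ``algebraic/binding inequality'', Lemma \ref{algebraic inequality}, used in exactly the way you use it, with $q\theta>1\iff q>\tfrac{d+2}{d}$); structure and Euler--Lagrange by passing to the linear problem $\inf\{\Tr(|\nabla|\gamma|\nabla|)-\int\rho_{\mathcal Q}^{q-1}\rho_\gamma:0\le\gamma\le1\}$ and a bathtub characterization of its minimizers (Lemmas \ref{lem5.1}--\ref{lem5.2}); Pohozaev by testing against $\phi_j$ and $\nabla\chi_R\cdot\nabla\phi_j$; and CLR for the finiteness of $J_-$. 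Two points where you genuinely diverge are worth recording. For the trace-class conclusion in item (6) your argument (finite dimensionality of $\mathrm{Ker}(-\Delta-\rho_{\mathcal Q}^{q-1})$ from compactness of $(-\Delta)^{-1/2}\rho_{\mathcal Q}^{q-1}(-\Delta)^{-1/2}$ with $\rho_{\mathcal Q}^{q-1}\in L^{d/2}$, so that $\Tr\mathcal Q\le J_-+K_0$) is correct and arguably cleaner than the paper's, which instead proves the uniform bound $\|\phi_j\|_{L^2}\lesssim\|\nabla\phi_j\|_{L^2}$ by iterating $\||\nabla|^{s-\delta}\phi_j\|_{L^2}\lesssim\||\nabla|^{s}\phi_j\|_{L^2}$ and sums against $\|\mathcal Q\|_{\dot{\mathfrak H}^1}<\infty$. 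Your Lagrange-multiplier formulation of the EL step (constrained minimization of $\Tr((-\Delta)\gamma)$ at fixed $\int\rho_\gamma^q$, multiplier normalized by dilation) is equivalent to the paper's direct differentiation of $W((1-t)\mathcal Q+t\gamma)$ at $t=0$, though the latter avoids having to produce the multiplier on a nonconvex constraint set.

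The one place where your proposal is a sketch rather than a proof is item (4) for $d\ge3$. Your reduction of $\Tr(-\Delta)\mathcal Q(-\Delta)<\infty$ to $\rho_{\mathcal Q}\in L^{2q-1}$ is correct (and is exactly what the paper does for $d=1,2$, where $L^{2q-1}$ is directly reachable by the kinetic energy inequality), but for $d\ge3$ the exponent $2q-1$ can exceed $\tfrac{d}{d-2}$ (e.g.\ $d=3$, $q$ near $3$ gives $2q-1$ near $5$), so no interpolation with the a priori bounds can produce it: the gain must come from the equation. The ``elliptic bootstrap on the density'' you invoke is not carried out, and its two delicate points --- a quantified gain $L^p\Rightarrow L^{p'}$ with $p'>p$ at each step, and uniformity of the Green-function estimates over the possibly infinite family $\{\mu_j\}$ accumulating at $0$ (including the zero modes) --- are precisely what need to be checked. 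The paper sidesteps this by bootstrapping in the Sobolev scale instead: with $\delta>0$ fixed by $q=\tfrac{d-\delta}{d-2}$, one gets $\sum_j\|P\phi_j\|_{\dot H^{s+\delta}}^2\lesssim\|\rho_{\mathcal Q}\|_{L^{q}}^{2(q-1)}\sum_j\|\phi_j\|_{\dot H^{s}}^2$ uniformly in the truncation $P$ and in $\mu_j\ge0$, and iterates finitely many times from $s=1$ to reach $s=2$. You should either adopt that route or supply the missing density bootstrap in full.
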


\begin{remark}\label{remark 3}
{In Theorem \ref{main theorem}, the endpoint cases $q=\frac{d+2}{d}$ or $\frac{d}{d-2}$) are missing. This is due to that we have used the interpolation in the proof of the profile decomposition (Theorem \ref{profile decomposition}). This kind of defect appear naturally in various setting. }
\end{remark}

We prove existence of an extremizer developing the concentration-compactness principle adapted to the class of operators admissible to the kinetic energy inequality \eqref{eq:LT}. The concentration-compactness principle, describing all possible failures of compactness, was introduced by Lions \cite{Lions1,Lions2} for bounded functions, and it has been a fundamental tool in the area of calculus of variation. This principle is also known as a bubble decomposition in the study of minimal surfaces \cite{SaUl,BrCo}. For dispersive and wave equations, the method was intensively used by many authors in the study of critical equations, for proving blowup, mass concentration or scattering \cite{HmKe05,BG,KM1,KM2}. In \cite{Lio87}, the concentration-compactness principle for finitely many orthonormal functions was developed to study the Hartree-Fock  system of $N$ particles. In \cite{DFL, LL, ADS}, the method is adapted to the trace class operators to solve variational problems for operators. In this paper, the concentration-compactness principle is not only extended to the non-traceable class $\mathfrak{B}^1$ of operators (see \eqref{B definition}), but it is also formulated as a profile decomposition (for possible applications in the future work).

The kinetic energy inequality \eqref{eq:LT} is not compact due to translation invariance, which is a noncompact symmetry. Thus, taking up to a subsequence, a bounded sequence in $\mathfrak{B}^1$ can be expressed as a sum of orthogonally translating profiles and reminder term which is convergent in potential energy. Due to the profile decomposition, one actually see that the translation symmetry is responsible for the noncompactness of the embedding. Then, we combine this with a binding inequality of potential energy ($=L^q$-norm) to obtain that an extremizing sequence has the Palais-Smale condition. In our setting, since we handle bounded operators (although their kinetic terms are traceable), we need to make suitable modifications and also encounter several difficulties. For instance, contrary to the usual profile decompositions (see \cite{HmKe05}, for instance), asymptotic orthogonality of the profiles cannot be seen in the operator norm (see Theorem \ref{profile decomposition} (3) and Remark \ref{remark on the profile decomposition} (1)). Moreover, to get the desired estimate for the remainder (Theorem \ref{profile decomposition} (5)), we have to keep each profile is self-adjoint and non-negative. These will be taken in account in the proof of the profile decomposition.

Next, we derive the Euler-Lagrange equation for the extremizer when we express the operator as orthogonal states. In this step, we use the argument introduced in \cite{HLS, DFL, LL, ADS, FLLS13}. Indeed, we can transfer the variational problem associated to the kinetic energy to the minimization problem for a Schr\"odinger operator with a potential. As a consequence, we result in a system of \textit{self-consistent equations}, that is, the Euler-Lagrange equations. Then, using this elliptic system, we derive some qualitative properties of the extremizer.

\subsection{Global existence versus finite time blow-up dichotomy}
From now on, we fix $d=3$ and $q=2$, and consider the kinetic energy inequality
\begin{equation}\label{eq:LT PDE application}
\N{\rho_\gamma}_{L^2(\mathbb{R}^3)}\le C_{KE} \|\gamma\|_{\textup{op}}^{1/4}\|\gamma\|_{\dot{\mathfrak{H}}^1}^{3/4}.
\end{equation}
As an application of Theorem \ref{main theorem}, one can characterize the geometry of the dense subset
$$\tilde{\mathcal{S}}:=\Big\{\gamma\in\mathcal{L}:\ \|\gamma\|_{\textup{op}}=1\textup{ and }\|\gamma\|_{\dot{\mathfrak{H}}^1}<\infty\Big\}$$
of the unit sphere in the Banach space $(\mathcal{L}, \|\cdot\|_{\textup{op}})$ of bounded operators. Indeed, it follows from the sharp kinetic energy inequality \eqref{sharp kinetic energy inequality} and the Pohozaev identities \eqref{Po} that there is a forbidden region
$$\left\{(\alpha, y): y\leq f(\alpha):=\frac{1}{2}\alpha^2-\frac{1}{3\|\mathcal{Q}\|_{\dot{\mathfrak{H}}^1}^{1/2}}\alpha^3\right\},$$
on the $\alpha$-$y$ plane with $\alpha=\|\gamma\|_{\dot{\mathfrak{H}}^1}^{1/2}$ and $y=\mathcal{E}(\gamma)$:
Thus, if $\mathcal{E}(\gamma)<\mathcal{E}(Q)$, then either $\|\gamma\|_{\dot{\mathfrak{H}}^1}<\|\mathcal{Q}\|_{\dot{\mathfrak{H}}^1}$ or $\|\gamma\|_{\dot{\mathfrak{H}}^1}>\|\mathcal{Q}\|_{\dot{\mathfrak{H}}^1}$.

\begin{figure}[h!]
\centering
\def\Cubic(#1){(#1)^2/2-(#1)^3/18}
\begin{tikzpicture}
	\begin{axis}
	[xmin=-1, xmax=10, ymin=-1, ymax=7,
        axis lines = middle, axis line style = thick,  unit vector ratio = 1 1 1,
        xtick = {4.4451, 6, 7.3206},
        xticklabels = {\footnotesize$A$, \footnotesize$\|\mathcal{Q}\|_{\dH^1}$, \footnotesize$B$},
        ytick = {5,6},
        yticklabels = {\footnotesize$\E(\g_0)$, \footnotesize$\E(\mathcal{Q})$}]
        \addplot[domain=0:9, samples=200, fill=gray!15, draw=none] {\Cubic(x)};
        \addplot[mark=*,mark options={scale=0.8, solid}, dashed] coordinates {(4.4451, 0) (4.4451, 5)};
        \addplot[dashed] coordinates {(6, 0) (6, 1.75)};
        \addplot[dashed] coordinates {(6, 3.05) (6, 6)};
        \addplot[mark=*, mark options={scale=0.8, solid}] coordinates {(6, 0)};
        \addplot[mark=*, mark options={scale=0.8, solid}] coordinates {(6, 6)};
        \addplot[mark=*, mark options={scale=0.8, solid}, dashed] coordinates {(7.3206, 0) (7.3206, 5)};    
        \addplot[domain=0:10, dashed] {5};
        \addplot[domain=0:10, dashed] {6};
        \addplot[domain=-1:10, samples=200, mark=none, thick] {\Cubic(x)};
        \addplot[thick] coordinates {(0, 0) (9, 0)};
	\end{axis}
	\draw(5.1, 2.3)node[left]{\footnotesize\textsf{forbidden}};
	\draw(4.9, 1.95)node[left]{\footnotesize\textsf{region}};
	\draw(8.7, 0.6)node[left]{\footnotesize$\alpha=\|\g\|_{\dH^1}$};
	\draw(1.4, 5.25)node[left]{\footnotesize$y=\E(\g)$};
\end{tikzpicture}
\caption{Forbidden region on the $\alpha$-$y$ plane}\label{fig1}
\end{figure}

Coming back to the dynamical PDE problem, combined with the conservation law $\mathcal{E}$ in the above observation, we can prove the global existence versus finite time blow-up dichotomy.
\begin{theorem}[Global existence versus finite time blowup dichotomy for CNLS]\label{Dichotomy}
Let $\mathcal{Q}$ be an extremizer for the kinetic energy inequality \eqref{eq:LT PDE application} normalized so that $\|\mathcal{Q}\|_{\textup{op}}=1$. Suppose that $\gamma_0\in\mathfrak{H}^1$, $\gamma_0\geq 0$, $\|\gamma_0\|_{\textup{op}}=1$ and
$$\E(\gamma_0)<\E(\mathcal{Q}).$$
Let $\gamma\in C_t(I_{max}; \mathfrak{H}^1)$ be a solution to \eqref{CNLS'} with initial data $\gamma_0$, where $I_{max}$ is the maximal interval of existence. Then, the following hold.
\begin{enumerate}
\item If $\|\gamma_0\|_{\dot{\mathfrak{H}}^1}<\|\mathcal{Q}\|_{\dot{\mathfrak{H}}^1}$, then $\gamma(t)$ exists in $\mathfrak{H}^1$ globally in time. Moreover, $\|\gamma(t)\|_{\dot{\mathfrak{H}}^1}<\|\mathcal{Q}\|_{\dot{\mathfrak{H}}^1}$ on $I_{max}=\R$.
\item If $\|\gamma_0\|_{\dot{\mathfrak{H}}^1}>\|\mathcal{Q}\|_{\dot{\mathfrak{H}}^1}$, then $\|\gamma(t)\|_{\dot{\mathfrak{H}}^1}>\|\mathcal{Q}\|_{\dot{\mathfrak{H}}^1}$ on $I_{max}$. If we further assume finite variance of initial data, i.e., $\int_{\R^3}|x|^2\rho_{\g_0}dx<\infty$, then $I_{max}$ is finite and thus $\|\gamma(t)\|_{\dot{\mathfrak{H}}^1}$ blows up in finite time.
\end{enumerate}
\end{theorem}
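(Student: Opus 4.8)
The plan is to transplant the Holmer--Roudenko argument \cite{HoRo07} to the operator setting, using the sharp kinetic energy inequality \eqref{sharp kinetic energy inequality} and the Pohozaev identities \eqref{Po} in place of the sharp Gagliardo--Nirenberg inequality and its ground-state identities. First I record the reductions. Since $\gamma_0\ge 0$ and $\|\gamma_0\|_{\textup{op}}=1$, conservation of the spectrum of $\gamma(t)$ (the orthogonality law, see Section \ref{sec: local theory}) gives $\gamma(t)\ge 0$ and $\|\gamma(t)\|_{\textup{op}}=1$ throughout $I_{max}$; with $N_0:=\textup{Tr}\,\gamma_0<\infty$ (finite because $\gamma_0\in\mathfrak{H}^1$), conservation of the total mass then yields the identity $\|\gamma(t)\|_{\mathfrak{H}^1}=\textup{Tr}((1-\Delta)\gamma(t))=N_0+\|\gamma(t)\|_{\dot{\mathfrak{H}}^1}$ for all $t$, and $\mathcal{E}(\gamma(t))=\tfrac12\|\gamma(t)\|_{\dot{\mathfrak{H}}^1}-\tfrac14\|\rho_{\gamma(t)}\|_{L^2(\mathbb{R}^3)}^2$. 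Moreover, for $d=3$, $q=2$ one has $\theta=\tfrac34$, so \eqref{Po} and \eqref{sharp kinetic energy inequality} give $\mathcal{E}(\mathcal{Q})=\tfrac16\|\mathcal{Q}\|_{\dot{\mathfrak{H}}^1}$ and $C_{KE}^2=\tfrac43\|\mathcal{Q}\|_{\dot{\mathfrak{H}}^1}^{-1/2}$.

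\emph{Step 1 (forbidden region and trapping of $\|\gamma(t)\|_{\dot{\mathfrak{H}}^1}$).} Put $\alpha(t):=\|\gamma(t)\|_{\dot{\mathfrak{H}}^1}^{1/2}$ and $f(\alpha):=\tfrac12\alpha^2-\tfrac{1}{3\|\mathcal{Q}\|_{\dot{\mathfrak{H}}^1}^{1/2}}\alpha^3$. Plugging \eqref{eq:LT PDE application} with $\|\gamma(t)\|_{\textup{op}}=1$ into the energy identity yields $\mathcal{E}(\gamma(t))\ge f(\alpha(t))$ on $I_{max}$. The function $f$ is strictly increasing on $[0,\alpha^*]$ and strictly decreasing on $[\alpha^*,\infty)$ with $\alpha^*:=\|\mathcal{Q}\|_{\dot{\mathfrak{H}}^1}^{1/2}$, and $\max_{\alpha\ge0}f=f(\alpha^*)=\mathcal{E}(\mathcal{Q})$. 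Since $t\mapsto\alpha(t)$ is continuous (the solution lies in $C_t(I_{max};\mathfrak{H}^1)$ and $\|\gamma(t)\|_{\dot{\mathfrak{H}}^1}=\|\gamma(t)\|_{\mathfrak{H}^1}-N_0$) and $\mathcal{E}(\gamma(t))\equiv\mathcal{E}(\gamma_0)<\mathcal{E}(\mathcal{Q})=f(\alpha^*)$, the value $\alpha^*$ is never attained; hence $\alpha(t)-\alpha^*$ keeps the sign of $\alpha(0)-\alpha^*$ on $I_{max}$, which proves the ``moreover'' clauses in (1) and (2). In case (2), let $\alpha_+>\alpha^*$ be the unique root of $f(\alpha_+)=\mathcal{E}(\gamma_0)$ in $(\alpha^*,\infty)$ (it exists since $\mathcal{E}(\gamma_0)<\max f$ and $f$ decreases from $\max f$ to $-\infty$ there); then $f(\alpha(t))\le\mathcal{E}(\gamma_0)=f(\alpha_+)$ together with $\alpha(t)>\alpha^*$ forces $\alpha(t)\ge\alpha_+$, i.e. the uniform lower bound $\|\gamma(t)\|_{\dot{\mathfrak{H}}^1}\ge\kappa:=\alpha_+^2>\|\mathcal{Q}\|_{\dot{\mathfrak{H}}^1}$ on $I_{max}$.

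\emph{Step 2 (conclusions).} For (1): by Step 1 and the mass identity, $\|\gamma(t)\|_{\mathfrak{H}^1}=N_0+\|\gamma(t)\|_{\dot{\mathfrak{H}}^1}<N_0+\|\mathcal{Q}\|_{\dot{\mathfrak{H}}^1}$ uniformly on $I_{max}$, so $\gamma(t)$ cannot leave every bounded set of $\mathfrak{H}^1$; the blowup alternative of the local theory (Section \ref{sec: local theory}) gives $I_{max}=\mathbb{R}$. For (2), assume $\int_{\mathbb{R}^3}|x|^2\rho_{\gamma_0}\,dx<\infty$ and set $I(t):=\int_{\mathbb{R}^3}|x|^2\rho_{\gamma(t)}\,dx=\textup{Tr}(|x|^2\gamma(t))\ge 0$. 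Summing the virial identity for the individual equations of \eqref{CNLS} (whose common potential is $\rho_\gamma$), or computing directly from \eqref{CNLS'}, gives
$$\ddot I(t)=8\,\textup{Tr}(-\Delta\gamma(t))-6\|\rho_{\gamma(t)}\|_{L^2(\mathbb{R}^3)}^2=8\|\gamma(t)\|_{\dot{\mathfrak{H}}^1}-6\|\rho_{\gamma(t)}\|_{L^2(\mathbb{R}^3)}^2,$$
and eliminating $\|\rho_{\gamma(t)}\|_{L^2}^2$ by the energy identity and conservation of $\mathcal{E}$ yields $\ddot I(t)=24\,\mathcal{E}(\gamma_0)-4\|\gamma(t)\|_{\dot{\mathfrak{H}}^1}$. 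By Step 1, $\|\gamma(t)\|_{\dot{\mathfrak{H}}^1}\ge\kappa$, whereas $24\,\mathcal{E}(\gamma_0)<24\,\mathcal{E}(\mathcal{Q})=4\|\mathcal{Q}\|_{\dot{\mathfrak{H}}^1}<4\kappa$; hence $\ddot I(t)\le -c<0$ on $I_{max}$ with $c:=4(\kappa-\|\mathcal{Q}\|_{\dot{\mathfrak{H}}^1})>0$. A convexity argument ($I\ge0$, $\ddot I\le -c$) forces $I_{max}$ to be a bounded interval, and then the blowup alternative together with the mass identity gives $\|\gamma(t)\|_{\dot{\mathfrak{H}}^1}=\|\gamma(t)\|_{\mathfrak{H}^1}-N_0\to\infty$ as $t$ approaches an endpoint of $I_{max}$.

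\emph{Main obstacle.} The conceptual content is light once Theorem \ref{main theorem} supplies the constants appearing in \eqref{Po}; the genuine technical work is the virial step. For solutions merely in $C_t(I_{max};\mathfrak{H}^1)$ one must justify that finite variance is propagated and that $I\in C^2(I_{max})$ with the stated formula, which typically requires either a regularization argument or a truncated (Morawetz-type) virial identity (replacing $|x|^2$ by a smooth compactly supported weight) and then controlling the error terms as the truncation radius tends to infinity using the uniform $\dot{\mathfrak{H}}^1$ bound of Step 1 — along with verifying the precise coefficients $8$ and $6$ in the operator-valued virial identity. I expect this, rather than the trapping argument, to be where the care is needed.
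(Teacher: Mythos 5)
Your proposal is correct and follows essentially the same route as the paper's proof: the sharp kinetic energy inequality plus the Pohozaev identities yield the forbidden region $\mathcal E(\gamma)\ge f(\alpha)$ with $f$ maximized at $\alpha^*=\|\mathcal Q\|_{\dot{\mathfrak H}^1}^{1/2}$ and $f(\alpha^*)=\mathcal E(\mathcal Q)$, a continuity argument traps $\|\gamma(t)\|_{\dot{\mathfrak H}^1}$ on one side of $\|\mathcal Q\|_{\dot{\mathfrak H}^1}$, case (1) concludes via the blow-up alternative, and case (2) uses the virial identity $\ddot I=24\mathcal E(\gamma_0)-4\|\gamma\|_{\dot{\mathfrak H}^1}\le 24(\mathcal E(\gamma_0)-\mathcal E(\mathcal Q))<0$ together with Glassey's convexity argument. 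The introduction of $\kappa=\alpha_+^2$ is an unnecessary refinement (the paper just uses $\|\gamma(t)\|_{\dot{\mathfrak H}^1}\ge\|\mathcal Q\|_{\dot{\mathfrak H}^1}$ since $24(\mathcal E(\gamma_0)-\mathcal E(\mathcal Q))$ is already a fixed negative constant), and your closing caution about rigorously justifying propagation of finite variance and the $C^2$ regularity of $I(t)$ is a fair observation, but the paper itself proves the virial identity only formally and appeals implicitly to the same standard regularization, so this is not a gap relative to the paper.
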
 

\begin{remark}
In order to see the role of the kinetic energy inequality for the PDE problem, we consider the system \eqref{CNLS} of $N$ orthonormal functions $\{\phi_j(t)\}_{j=1}^N$. If we ignore orthogonality and use the triangle and the Gagliardo-Nirenberg inequalities, then one can find a forbidden region on the $\alpha$-$y$ plane as well as a dichotomy as above. However, as $N$ increases to infinity, this forbidden region shrinks to $(0,0)$ and thus much less will be known from the picture, while the dichotomy in Theorem \ref{Dichotomy} is valid uniformly in the number of orthonormal functions.
\end{remark}

\begin{remark}
In the dynamical result, we do not pursue to present full general results in various settings. For simplicity of presentation, we just illustrate a typical result which may be (most) interesting, that is , the case of three dimensional cubic nonlinearity case. As like in \cite{HoRo07}, we believe that the main results of this paper is extended to any model which is $L^2$-subcritical but $H^1$-subcritical in scaling, including the NLS-type
$$i\partial_t \gamma=[-\Delta+(\rho_\gamma)^a, \gamma]$$
and the nonlinear Hartree type
$$i\partial_t \gamma=[-\Delta+\tfrac{1}{|x|^a}*\rho_\gamma, \gamma].$$
\end{remark}

\subsection{Organization of the paper}
In Section 2, we prepare the proof of the main theorems introducing operators spaces and their properties as well as giving a proof of the kinetic energy inequality. In Section 3, we develop the profile decomposition for operators (Theorem \ref{profile decomposition}), which is the main analytic tool of this paper. In Section 4, using the profile decomposition, we construct an extremizer for the kinetic energy inequality (Theorem \ref{main theorem} (1)). In Section 5 and 6, we prove the properties of an extremizer including structures, the Euler-Lagrange equation, regularity and summability (Theorem \ref{main theorem} (2)-(6)). Finally, in Section 7, coming back to the PDE problem, we establish a global versus blow-up dichotomy for the infinitely coupled NLS \eqref{CNLS} (Theorem \ref{Dichotomy}).

\subsection{Notations}\label{notations}
For notational convenience, if there is no confusion, we omit the range of indices in sequences and sums. Precisely, we simply write 
$$\{a_j\}\quad(\textup{resp. }\sum a_j) \qquad \textup{for} \qquad  \{a_j\}_{j=1}^J \quad (\textup{resp.  } \sum_{j=1}^J a_j) $$
for  with $J\in \mathbb{N}\cup\{\infty\}$. Similarly, we denote 
$$a_n\to a$$
if $n$ is clearly sent to infinity.

\subsection{Acknowledgement}
Y.H. was partially supported by NRF of Korea (NRF-2017R1C1B1008215). S.K. was partially supported by NRF of Korea (NRF-2015R1D1A1A01058832). Y.H. would like to thank Prof. Mathieu Lewin for valuable comments and discussions while they visited The University of Texas at Austin in March 2017.

\vspace{10pt}

\section{Preliminaries}

In Section \ref{sec: Operator spaces}, we introduce the operator spaces that will be used in this paper, most importantly the Sobolev-type space $\mathfrak{B}^\alpha$ associated with the kinetic energy inequality. In Section \ref{properties of the space}, we give  some important properties of the space $\mathfrak{B}^\alpha$, focusing on density functions. Then, in Section \ref{sec: kinetic energy inequality}, we prove the kinetic energy inequality. Finally, in Section \ref{sec: Low frequency approximation}, as an application, we show the mid-frequency approximation for density functions. In fact, most materials in Section \ref{sec: Operator spaces}-\ref{sec: kinetic energy inequality} have been addressed in \cite{FLLS13} in a more complicated setting, but we present them here for completeness' sake. In the meantime, we amend and simplify some of the materials, since we are considering a simpler setup.

\subsection{Operator spaces}\label{sec: Operator spaces}
We denote by
$$\mathcal{L}=\mathcal{L}(L^2(\R^d))$$
the Banach space of bounded linear operators on $L^2(\R^d)$ with the operator norm $\|\cdot\|_{\textup{op}}$. For $1\leq p\leq\infty$, we define the \emph{Schatten $p$-class} $\S^p$ by the Banach space of compact operators on $L^2(\R^d)$ equipped with the norm
$$\|\gamma\|_{\mathfrak{S}^p}:=\left\{\begin{aligned}
&(\Tr |\gamma|^p)^{\frac{1}{p}}&&\textup{if }1\leq p<\infty,\\
&\|\gamma\|_{\textup{op}}&&\textup{if }p=\infty.
\end{aligned}\right.$$
We remark that $ \mathfrak{S}^1$ is the trace-class, $\mathfrak{S}^2 $ is the Hilbert-Schmidt class, and $\mathfrak{S}^\infty$ is the set of all compact operators.

We recall the physicists' bra-ket notation: given $\phi, \psi\in L^2(\mathbb{R}^d)$, $|\phi\rangle\langle\psi|$ denotes the integral operator with the kernel $\phi(x)\overline{\psi(x')}$, i.e., 
$$f(x)\mapsto\Big(|\phi\rangle\langle\psi|f\rangle\Big)(x) =  \phi(x)\int_{\mathbb{R}^d} \overline{\psi(x')}f(x') dx,$$
while 
$$\langle \phi|\psi\rangle=\int_{\mathbb{R}^d}\overline{\phi(x')}\psi(x')dx'.$$
With this bra-ket notation, a self-adjoint operator $\gamma$ in $\mathfrak{S}^p$ can be written as an eigenfunction expansion
	\[\gamma=\sum\lambda_j|\phi_j\rangle\langle \phi_j|,\]
where $\{\phi_j\}$ is an orthonormal set in $L^2(\mathbb{R}^d)$. Note that $\{\lambda_j\}$ is the set of eigenvalues of the operator $\gamma$. Thus, the Schatten $p$-norm of a self-adjoint operator is simply the $\ell^p$-norm of its eigenvalues,
$$\|\gamma\|_{\mathfrak{S}^p}=\left\|\{\lambda_j\}\right\|_{\ell^p}.$$

Now, we introduce the Sobolev spaces for self-adjoint operators that we will use. Given $\alpha\in\R$, we define the Sobolev space $\dot{\mathfrak{H}}^\alpha$ (resp. $\mathfrak{H}^\alpha$) by the collection of self-adjoint operators equipped with the norm
$$\|\gamma\|_{\dot{\mathfrak{H}}^\alpha}:=\N{|\nabla|^\alpha \gamma|\nabla|^\alpha}_{\mathfrak{S}^1}\quad\left(\textup{resp. }\|\gamma\|_{\mathfrak{H}^\alpha}:=\N{\langle\nabla\rangle^\alpha \gamma\langle\nabla\rangle^\alpha}_{\mathfrak{S}^1}\right)$$
where $|\nabla|^\alpha$ (resp. $\langle\nabla\rangle^\alpha$) is the Fourier multiplier operator with the symbol $|\xi|^{\alpha}$  (resp. $\langle\xi\rangle^\alpha=(1+|\xi|^2)^{\frac{\alpha}{2}}$). We also define the inhomogeneous Sobolev space $\mathfrak{B}^\alpha$ by the collection of bounded self-adjoint operators with the norm 
\begin{equation}\label{B definition}
\|\gamma\|_{\mathfrak{B}^\alpha}:=\|\gamma\|_{\textup{op}}+\|\gamma\|_{\dot{\mathfrak{H}}^\alpha}=\|\gamma\|_{\textup{op}}+\N{|\nabla|^\alpha \gamma|\nabla|^\alpha}_{\mathfrak{S}^1}, \end{equation}
which fits into the kinetic energy inequality. We remark that for technical conveniences (e.g. to use eigenfunction expansions), operators in $\mathfrak{B}^\alpha$, as well as those in $\dot{\mathfrak{H}}^\alpha$, are restricted to be self-adjoint, while Schatten-class operators are not necessarily self-adjoint.

We employ the weak-$\ast$ topology on the operator space $\mathfrak{B}^\alpha$ induced by the  intersection of the relative topologies of two weak-$\ast$ topologies on $\mathcal{L}$ and $\dH^\alpha$. In other words, we say that $\g_n\rightharpoonup^*\g\quad\textup{in }\mathfrak{B}^\alpha$ if $\g_n\rightharpoonup^* \g$ in $\mathcal{L}$ as well as in $\dot{\mathfrak{H}}^\alpha$, that is,
\begin{align*}
\left\{\begin{aligned}
\textup{Tr}(\g_n K)&\to\Tr(\g K)&&\forall K\in\mathfrak{S}^1,\\
\textup{Tr}(|\nabla|^\alpha \g_n |\nabla|^\alpha K')&\to\Tr(|\nabla|^\alpha\g|\nabla|^\alpha K')&&\forall K'\in\mathfrak{S}^\infty
\end{aligned}\right.
\end{align*}
by the duality relations $(\mathcal{L},\N{\cdot}_{\textup{op}})=(\S^1,\N{\cdot}_{\S^1})^\ast$ and $(\S^1,\N{\cdot}_{\S^1})=(\S^\infty,\N{\cdot}_{\textup{op}})^\ast$ (see \cite[Theorem VI.26]{ReSi}).

\subsection{Properties of the operator space $\mathfrak{B}^\alpha$}\label{properties of the space}
For non-negative operators in $\mathfrak{B}^\alpha$, density functions are properly defined in the following sense.
\begin{lemma}\label{lem:densitylocL1}
Let $\alpha>0$. If $\gamma\in\mathfrak{B}^\alpha$ and $\gamma\geq0$, then its density function $\rho_\gamma$ is locally in $L^1(\mathbb{R}^d)$. Moreover, if $\g_n\rightharpoonup^\ast0$ in $\mathfrak{B}^\alpha$ and $\gamma_n\geq0$, then $\rho_{\g_n}\to0$ in $L^1_{loc}(\R^d)$.
\end{lemma}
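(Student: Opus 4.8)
The plan is to bound the local mass of $\rho_\gamma$ by a Littlewood--Paley split in which the operator norm controls low frequencies \emph{locally} while the $\dH^\alpha$-norm controls high frequencies \emph{globally}. I would start from a spectral decomposition $\gamma=\sum\lambda_j|\phi_j\rangle\langle\phi_j|$ with $\lambda_j\ge 0$ and $\{\phi_j\}$ orthonormal, so that $\rho_\gamma=\sum\lambda_j|\phi_j|^2$ pointwise a.e. Let $P_{\le N}=\mathbf 1_{|\nabla|\le N}$ and $P_{>N}=1-P_{\le N}$. Splitting $\phi_j=P_{\le N}\phi_j+P_{>N}\phi_j$ and using $|a+b|^2\le 2|a|^2+2|b|^2$, together with $P_{\le N}\gamma P_{\le N}=\sum\lambda_j|P_{\le N}\phi_j\rangle\langle P_{\le N}\phi_j|$ (and similarly with $P_{>N}$), gives the pointwise bound
\[
\rho_\gamma\le 2\,\rho_{P_{\le N}\gamma P_{\le N}}+2\,\rho_{P_{>N}\gamma P_{>N}}.
\]

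Next I would estimate the two pieces. Since $\gamma\ge 0$ and $P_{>N}^2\le N^{-2\alpha}|\nabla|^{2\alpha}$ as Fourier multipliers, $\int_{\R^d}\rho_{P_{>N}\gamma P_{>N}}=\Tr(P_{>N}\gamma P_{>N})\le N^{-2\alpha}\Tr(|\nabla|^\alpha\gamma|\nabla|^\alpha)=N^{-2\alpha}\|\gamma\|_{\dH^\alpha}$. For the low-frequency piece on a ball $B_R$, writing $V:=\mathbf 1_{B_R}P_{\le N}$ one has $\int_{B_R}\rho_{P_{\le N}\gamma P_{\le N}}=\Tr(V\gamma V^*)=\|V\gamma^{1/2}\|_{\mathfrak S^2}^2\le\|\gamma\|_{\textup{op}}\|V\|_{\mathfrak S^2}^2$, and $\|V\|_{\mathfrak S^2}^2=\|\mathbf 1_{B_R}P_{\le N}\|_{\mathfrak S^2}^2\lesssim_d R^dN^d<\infty$, computed directly from the convolution kernel of $P_{\le N}$. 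Adding the two bounds proves $\rho_\gamma\in L^1_{loc}(\R^d)$; choosing $N$ to balance them even yields the quantitative estimate $\int_{B_R}\rho_\gamma\lesssim_d(R^d\|\gamma\|_{\textup{op}})^{\frac{2\alpha}{d+2\alpha}}\|\gamma\|_{\dH^\alpha}^{\frac{d}{d+2\alpha}}$, which I will not need but which already makes the convergence claim plausible.

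For the second assertion, I would first observe that $\gamma_n\rightharpoonup^*0$ in $\mathfrak B^\alpha$ forces $M:=\sup_n\|\gamma_n\|_{\mathfrak B^\alpha}<\infty$ by the uniform boundedness principle in each of the two dual pairings. Fix $R>0$ and $\varepsilon>0$, and choose $N$ with $2MN^{-2\alpha}<\varepsilon$; then the split above gives, for every $n$,
\[
\int_{B_R}\rho_{\gamma_n}\le 2\,\Tr(\gamma_n K_{N,R})+\varepsilon,\qquad K_{N,R}:=P_{\le N}\mathbf 1_{B_R}P_{\le N}=(\mathbf 1_{B_R}P_{\le N})^*(\mathbf 1_{B_R}P_{\le N}),
\]
and $K_{N,R}$, being a product of two Hilbert--Schmidt operators, is trace-class. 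Since $\gamma_n\rightharpoonup^*0$ in $\mathcal L=(\mathfrak S^1)^*$ and $K_{N,R}\in\mathfrak S^1$, we get $\Tr(\gamma_n K_{N,R})\to 0$, so $\limsup_n\int_{B_R}\rho_{\gamma_n}\le\varepsilon$; as $\varepsilon,R$ are arbitrary and $\rho_{\gamma_n}\ge 0$, this is exactly $\rho_{\gamma_n}\to 0$ in $L^1_{loc}(\R^d)$.

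I expect the only real care to be bookkeeping rather than a substantive obstacle: every trace identity above is a sum of non-negative terms, so Tonelli/monotone convergence legitimizes all the rearrangements (in particular $\rho_{P_{\le N}\gamma P_{\le N}}=\sum\lambda_j|P_{\le N}\phi_j|^2$ and the matching of the traces with the integrals, even before finiteness is established). The one genuine estimate, and the single place where \emph{both} norms defining $\mathfrak B^\alpha$ are used, is that $P_{\le N}\mathbf 1_{B_R}P_{\le N}$ is trace-class and not merely compact; the Hilbert--Schmidt computation for $\mathbf 1_{B_R}P_{\le N}$ is therefore the crux, and the rest is soft.
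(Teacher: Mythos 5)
Your proposal is correct, and it takes a genuinely different route from the paper's. The paper uses a \emph{fixed} frequency cutoff at scale $1$ (projections $\Pi_1^\pm$) together with a smooth compactly supported $\chi$, establishes trace-class for $\chi\Pi_1^-$ via the Birman--Solomjak inequality, puts the high-frequency piece in some $\mathfrak S^p$ via Kato--Seiler--Simon, and then for the convergence statement tests the low-frequency pieces against the $\mathcal L=(\mathfrak S^1)^*$ pairing and the high-frequency piece against the $\mathfrak S^1=(\mathfrak S^\infty)^*$ pairing, so that both components of the $\mathfrak B^\alpha$ weak-$*$ topology are used directly with no limiting argument in $N$. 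You instead use a \emph{variable} cutoff $N$, make the low-frequency test operator $K_{N,R}=(\mathbf 1_{B_R}P_{\le N})^*(\mathbf 1_{B_R}P_{\le N})$ trace-class by the elementary Plancherel/Hilbert--Schmidt computation (no Birman--Solomjak needed), absorb the high-frequency piece into a uniform-in-$n$ tail $2MN^{-2\alpha}$, and close with an $\varepsilon$-argument using only the $\mathcal L$ weak-$*$ pairing together with the uniform $\dH^\alpha$ bound from Banach--Steinhaus. What the paper's approach buys is a cleaner one-shot argument exploiting the full structure of the $\mathfrak B^\alpha$ weak-$*$ topology; what yours buys is that the only nontrivial Schatten input is the $p=2$ (Plancherel) case of Kato--Seiler--Simon, plus, as a side benefit, the quantitative bound $\int_{B_R}\rho_\gamma\lesssim_d(R^d\|\gamma\|_{\textup{op}})^{\frac{2\alpha}{d+2\alpha}}\|\gamma\|_{\dH^\alpha}^{\frac{d}{d+2\alpha}}$, which is a local endpoint of the kinetic energy inequality. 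All the technical points you flag (pointwise bound $\rho_\gamma\le 2\rho_{P_{\le N}\gamma P_{\le N}}+2\rho_{P_{>N}\gamma P_{>N}}$, identification of $\int_{B_R}\rho_\sigma$ with $\Tr(\mathbf 1_{B_R}\sigma\mathbf 1_{B_R})$ for $\sigma\ge0$, cyclicity under the Hilbert--Schmidt hypotheses, the operator inequality $P_{>N}^2\le N^{-2\alpha}|\nabla|^{2\alpha}$) are correct as stated.
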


\begin{proof}
It suffices to show that $\textup{Tr}(\chi\gamma\chi)<\infty$ and $\textup{Tr}(\chi\gamma_n\chi)\to 0$ for any compactly supported smooth cut-off $\chi \in C^\infty_0(\R^d)$, because by non-negativity,
$$\textup{Tr}(\chi\gamma\chi)=\int_{\mathbb{R}^d}\rho_{\chi \gamma\chi}dx=\int_{\mathbb{R}^d}\chi^2\rho_{\gamma}dx=\|\chi^2\rho_{\gamma}\|_{L^1}$$
and similarly, $\textup{Tr}(\chi\gamma_n\chi)=\cdots=\|\chi^2\rho_{\gamma_n}\|_{L^1}$.

Given $\chi \in C^\infty_0(\R^d)$, we decompose 
\begin{align*}
\textup{Tr}(\chi\gamma\chi)&=\textup{Tr}(\chi\Pi_1^-\gamma\chi)+\textup{Tr}(\chi\Pi_1^+\gamma\Pi_1^-\chi)+\textup{Tr}(\chi\Pi_1^+\gamma\Pi_1^+\chi)\\
&=\textup{Tr}\left(\chi\Pi_1^-\g\chi\right)+\textup{Tr}\left(\chi\Pi_1^+\g\Pi_1^-\chi\right)\\
&\quad+\textup{Tr}\left((\chi\Pi_1^+|\nabla|^{-\alpha})(|\nabla|^\alpha\g|\nabla|^{\alpha})(|\nabla|^{-\alpha}\Pi_1^+\chi)\right),
\end{align*}
where $\Pi_1^-\colonequals\mathbf{1}_{(-\Delta)\leq 1}$ and $\Pi_1^+:=1-\Pi_1^-$. Note that $\chi\Pi_1^+|\nabla|^{-\alpha}\in\mathfrak{S}^p$ for some $p\geq 2$ (so $\in\mathfrak{S}^\infty$) by the Kato-Seiler-Simon inequality \cite[Theorem 4.1]{Simon},
	\begin{equation}\label{Holder Schatten}
	\|f(x)g(|\nabla|)\|_{\mathfrak{S}^p}\leq (2\pi)^{-\frac{d}{p}}\|f\|_{L^p}\|g\|_{L^p},	\quad \forall p\geq 2,
	\end{equation}
and that $\chi\Pi_1^-$ is of trace-class by the Birman-Solomjak inequality \cite[Theorem 4.5]{Simon},
$$\|f(x)g(|\nabla|)\|_{\mathfrak{S}^1}\lesssim \|f\|_{2;1}\|g\|_{2;1},$$
where $\|f\|_{2;1}=\sum_{z\in\mathbb{Z}^d}\|f\|_{L^2(C_z)}$ and $C_z$ is the unit cube centered at $z$. Therefore, we show that
\begin{equation}\label{proof of lem:densitylocL1}
\begin{aligned}
\textup{Tr}(\chi\gamma\chi)&\leq\|\chi\Pi_1^-\|_{\mathfrak{S}^1}\|\g\|_{\textup{op}}\|\chi\|_{\textup{op}}+\|\chi\Pi_1^+\|_{\textup{op}}\|\g\|_{\textup{op}}\|\Pi_1^-\chi\|_{\mathfrak{S}^1}\\
&\quad+\|\chi\Pi_1^+|\nabla|^{-\alpha}\|_{\textup{op}}\||\nabla|^\alpha\g|\nabla|^{\alpha}\|_{\mathfrak{S}^1}\||\nabla|^{-\alpha}\Pi_1^+\chi\|_{\textup{op}}\\
&<\infty.
\end{aligned}
\end{equation}
Next, to show that $\textup{Tr}(\chi\gamma_n\chi)\to 0$, we decompose
\begin{align*}
\textup{Tr}(\chi\gamma_n\chi)&=\textup{Tr}\left(\chi\Pi_1^-\g_n\chi\right)+\textup{Tr}\left(\chi\Pi_1^+\g_n\Pi_1^-\chi\right)\\
&\quad+\textup{Tr}\left((\chi\Pi_1^+|\nabla|^{-\alpha})(|\nabla|^\alpha\g_n|\nabla|^{\alpha})(|\nabla|^{-\alpha}\Pi_1^+\chi)\right).
\end{align*}
Since each operator on the right hand side is of trace-class by the estimates in \eqref{proof of lem:densitylocL1}, using cyclicity of the trace, we may write
\begin{align*}
\textup{Tr}(\chi\gamma_n\chi)&=\textup{Tr}\left(\g_n(\Pi_1^-\chi^2)\right)+\textup{Tr}\left(\g_n(\Pi_1^-\chi^2\Pi_1^+)\right)\\
&\quad+\textup{Tr}\left((|\nabla|^\alpha\g_n|\nabla|^{\alpha})(|\nabla|^{-\alpha}\Pi_1^+\chi^2\Pi_1^+|\nabla|^{-\alpha})\right).
\end{align*}
Here, on the right hand side, $\gamma_n$ is tested against trace-class operators but $|\nabla|^\alpha\g_n|\nabla|^{\alpha}$ is tested against a compact operator. Therefore, from the weak-$\ast$ convergence $\g_n\rightharpoonup^\ast0$ in $\mathfrak{B}^\alpha$, we conclude that $\textup{Tr}(\chi\gamma_n\chi)\to 0$.
\end{proof}

Next, we prove that a non-negative operator in $\mathcal{B}^\alpha$ can be approximated by a compactly supported smooth finite-rank operator.

\begin{lemma}\label{density} Let $\alpha>0$. If $\g\in\mathfrak{B}^\alpha$ and $\gamma\geq 0$, then there exists a sequence $\{\g_n\} \subset \mathfrak{B}^\alpha$ of compactly supported smooth finite-rank non-negative operators such that $\|\gamma_n\|_{\textup{op}}\leq\|\gamma\|_{\textup{op}}$, $\g_n\to\g$ in the strong operator topology, $\g_n\to \g$ in $\dot{\mathfrak{H}}^1$ and $\rho_{\g_n}\to\rho_\g$ in $L^1_{\text{loc}}(\R^d)$.
\end{lemma}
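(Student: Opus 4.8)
The plan is to construct the approximating sequence in three stages — truncate the spectral decomposition to finite rank, mollify each eigenfunction, then spatially cut off — controlling the operator norm, the $\dot{\mathfrak{H}}^1$-norm, and the local $L^1$-convergence of densities at each stage. Since $\gamma \geq 0$ lies in $\mathfrak{B}^\alpha \subset \mathfrak{S}^\infty$, it is compact and self-adjoint, hence admits an eigenfunction expansion $\gamma = \sum_j \lambda_j |\phi_j\rangle\langle\phi_j|$ with $\lambda_j \geq 0$, $\lambda_j \to 0$, and $\{\phi_j\}$ orthonormal; moreover $\sum_j \lambda_j \||\nabla|^\alpha \phi_j\|_{L^2}^2 = \|\gamma\|_{\dot{\mathfrak{H}}^\alpha} < \infty$, so each $\phi_j$ with $\lambda_j > 0$ lies in $\dot{H}^\alpha \cap L^2$.

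\textbf{Step 1: Finite-rank truncation.} First I would set $\gamma^{(N)} := \sum_{j=1}^N \lambda_j |\phi_j\rangle\langle\phi_j|$. Then $\|\gamma^{(N)}\|_{\textup{op}} = \max_{1\le j\le N}\lambda_j \leq \|\gamma\|_{\textup{op}}$, and $\gamma^{(N)} \to \gamma$ in the strong operator topology (indeed $\|\gamma^{(N)} - \gamma\|_{\textup{op}} = \sup_{j > N}\lambda_j \to 0$). Since $\|\gamma - \gamma^{(N)}\|_{\dot{\mathfrak{H}}^1} = \sum_{j > N}\lambda_j\|\nabla\phi_j\|_{L^2}^2 \to 0$ as the tail of a convergent series, we have $\gamma^{(N)} \to \gamma$ in $\dot{\mathfrak{H}}^1$. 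For densities, $\|\rho_\gamma - \rho_{\gamma^{(N)}}\|_{L^1(K)}$ on a compact set $K$ equals $\textup{Tr}(\chi(\gamma - \gamma^{(N)})\chi)$ for a cutoff $\chi = 1$ on $K$, which tends to $0$ by the estimate \eqref{proof of lem:densitylocL1} applied to the non-negative tail operator $\gamma - \gamma^{(N)}$ (whose $\textup{op}$- and $\dot{\mathfrak{H}}^\alpha$-norms both vanish in the limit). So it suffices to approximate a single non-negative finite-rank operator.

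\textbf{Steps 2--3: Mollification and spatial cutoff.} Given $\gamma^{(N)} = \sum_{j=1}^N \lambda_j |\phi_j\rangle\langle\phi_j|$, I would replace each $\phi_j$ by $\phi_j^\varepsilon := \chi_R \cdot (\phi_j * \eta_\varepsilon)$, where $\eta_\varepsilon$ is a standard mollifier and $\chi_R$ a smooth cutoff to the ball of radius $R$. As $\varepsilon \to 0$ and $R \to \infty$, $\phi_j^\varepsilon \to \phi_j$ in $H^1$ (in particular in $L^2$ and $\dot{H}^1$), and each $\phi_j^\varepsilon$ is a compactly supported $C^\infty$ function. Setting $\widetilde{\gamma} := \sum_{j=1}^N \lambda_j |\phi_j^\varepsilon\rangle\langle\phi_j^\varepsilon|$ gives a compactly supported smooth finite-rank non-negative operator; convergence in $\dot{\mathfrak{H}}^1$ and in $L^1_{\text{loc}}$ of densities follows from the $H^1$-convergence of each of the finitely many $\phi_j^\varepsilon$ (for the density, $\rho_{\widetilde\gamma} = \sum_j \lambda_j|\phi_j^\varepsilon|^2 \to \sum_j\lambda_j|\phi_j|^2$ in $L^1_{\text{loc}}$ since $|\phi_j^\varepsilon|^2 \to |\phi_j|^2$ in $L^1$). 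The only subtle point is the operator-norm constraint: mollification and multiplication by a cutoff do not exactly preserve orthonormality, so $\widetilde{\gamma}$ may have operator norm slightly exceeding $\|\gamma\|_{\textup{op}}$. I would fix this by rescaling — replace $\widetilde{\gamma}$ by $\big(\|\gamma\|_{\textup{op}}/\|\widetilde\gamma\|_{\textup{op}}\big)\widetilde\gamma$ if necessary — noting that $\|\widetilde\gamma\|_{\textup{op}} \to \|\gamma^{(N)}\|_{\textup{op}} \le \|\gamma\|_{\textup{op}}$ as $\varepsilon \to 0$, so the rescaling factor is at most $1 + o(1)$ and does not spoil any of the convergences. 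A diagonal argument over $N$, $\varepsilon$, $R$ then produces the desired sequence $\{\gamma_n\}$.

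\textbf{The main obstacle} is the bookkeeping that keeps \emph{all four} requirements simultaneously: non-negativity and the operator-norm bound pull against smoothness and compact support. Non-negativity is preserved automatically by always writing the approximants as $\sum \lambda_j |\psi_j\rangle\langle\psi_j|$ with $\lambda_j \ge 0$ (rather than, say, truncating the kernel directly); the operator-norm bound is the delicate one and is handled by the harmless rescaling above. Everything else — strong operator convergence, $\dot{\mathfrak{H}}^1$-convergence, $L^1_{\text{loc}}$-convergence of densities — reduces, after the finite-rank truncation of Step 1, to elementary $H^1$-approximation of finitely many functions together with the trace estimate \eqref{proof of lem:densitylocL1} from the previous lemma.
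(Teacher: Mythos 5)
There is a genuine gap at the very first line of your argument. You assert that a non-negative $\gamma\in\mathfrak{B}^\alpha$ is compact ("$\mathfrak{B}^\alpha\subset\mathfrak{S}^\infty$") and therefore admits an atomic eigenfunction expansion $\gamma=\sum_j\lambda_j|\phi_j\rangle\langle\phi_j|$ with $\lambda_j\to 0$. This is false: $\mathfrak{B}^\alpha$ is by definition the space of \emph{bounded} self-adjoint operators with $|\nabla|^\alpha\gamma|\nabla|^\alpha$ trace-class, and the latter condition imposes no compactness on $\gamma$ itself. For a concrete counterexample take an orthonormal system $\{\psi_k\}_{k\ge1}$ in $L^2(\R^d)$ with $\hat\psi_k$ supported in the shrinking annuli $\{2^{-k-1}\le|\xi|\le 2^{-k}\}$, so that $\| |\nabla|^\alpha\psi_k\|_{L^2}^2\sim 2^{-2\alpha k}$. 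The operator $\gamma=\sum_{k\ge1}|\psi_k\rangle\langle\psi_k|$ is an infinite-rank orthogonal projection, hence not compact and with eigenvalue $1$ of infinite multiplicity (so $\lambda_j\not\to 0$), yet $\Tr\big(|\nabla|^\alpha\gamma|\nabla|^\alpha\big)=\sum_k\| |\nabla|^\alpha\psi_k\|_{L^2}^2<\infty$, so $\gamma\in\mathfrak{B}^\alpha$. Worse, one can conjugate a bounded multiplication operator on $L^2([0,1])$ by an isometry $V$ with $|\nabla|^\alpha V$ Hilbert--Schmidt and range spanned by such $\psi_k$, producing $\gamma=V M_t V^*\geq 0$ in $\mathfrak{B}^\alpha$ with purely continuous spectrum; then the expansion $\gamma=\sum_j\lambda_j|\phi_j\rangle\langle\phi_j|$ simply does not exist, and your Step~1 has nothing to truncate.

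The paper circumvents this by performing the reduction in the opposite order: it first applies the mid-frequency cut-offs $\gamma_n=P_n\gamma P_n$ with $P_n=\mathbf{1}_{1/n\le(-\Delta)\le n}$, shows directly that each $\gamma_n$ is trace-class by factoring $P_n\gamma P_n=(P_n|\nabla|^{-\alpha})(|\nabla|^\alpha\gamma|\nabla|^\alpha)(|\nabla|^{-\alpha}P_n)$ with the middle factor in $\mathfrak{S}^1$ and the outer factors bounded, and then establishes the three convergences for this sequence (SOT by a direct estimate, the homogeneous Sobolev norm by the operator dominated convergence theorem, and the densities via Lemma~\ref{lem:densitylocL1}). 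Only \emph{after} this does it invoke an eigenfunction expansion, which is now legitimate because the truncated operator is trace-class, hence compact. Your Steps~2--3 (mollify, cut off, rescale the operator norm) are fine in spirit and essentially carry out what the paper sketches at the end of its proof; your rescaling device to restore $\|\gamma_n\|_{\textup{op}}\le\|\gamma\|_{\textup{op}}$ is a valid alternative to the paper's suggestion of approximating "keeping orthogonality." But without something like the frequency truncation to first force compactness, your Step~1 is unfounded and the argument does not get off the ground. (As a separate minor remark, several occurrences of $\dot{\mathfrak{H}}^1$ in your write-up should read $\dot{\mathfrak{H}}^\alpha$ for general $\alpha$; the paper's statement carries the same slip, but for $\alpha\ne 1$ the quantity $\|\gamma-\gamma^{(N)}\|_{\dot{\mathfrak{H}}^1}$ need not be finite, so your Step~1 convergence claim should be in the $\dot{\mathfrak{H}}^\alpha$ norm.)
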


\begin{proof}
Given a non-negative operator $\gamma\in\mathfrak{B}^\alpha$, we define a sequence of non-negative operators $\{\gamma_n\}$ by $\g_n:=P_n \g P_n$, where $P_n:=\mathbf{1}_{\frac{1}{n}\leq(-\Delta)\leq n}$ is the Fourier multiplier operator with the symbol $\mathbf{1}_{\frac{1}{n} \leq |\xi|^2\leq n}$ ($\Rightarrow\|\gamma_n\|_{\textup{op}}\leq\|\gamma\|_{\textup{op}}$). In the following, we will prove that $\{\g_n\}$ is a sequence of trace-class operators with the desired convergences. If it is proved, approximating each trace-class operator $\gamma_n=\sum|\phi_{j,n}\rangle\langle\phi_{j,n}|$ by a finite-rank operator $\sum_{j=1}^{J_n}|\phi_{j,n}\rangle\langle\phi_{j,n}|$ with $J_n<\infty$, and then approximating $\{\phi_{j,n}\}_{j=1}^{J_n}$ by compactly supported smooth functions keeping orthogonality, we deduce the lemma.

Obviously, $\g_n$'s are trace-class operators, because
$$\|\g_n\|_{\S^1}\leq \|P_n|\nabla|^{-\alpha} \|_{\textup{op}}\| |\nabla|^\alpha\g|\nabla|^\alpha\|_{\S^1}\||\nabla|^{-\alpha} P_n\|_{\textup{op}}<\infty.$$
They converge to $\gamma$ in the strong operator topology, since for any $\phi\in L^2$,
\begin{align*}
\N{\g\phi-P_n\g P_n\phi}_{L^2} &\leq\N{(1-P_n)\g\phi}_{L^2}+\N{P_n\g (1-P_n)\phi}_{L^2}\\
 &\leq\N{(1-P_n)\g\phi}_{L^2}+\|P_n\|_{\textup{op}}\|\g\|_{\textup{op}}\|(1-P_n)\phi\|_{L^2}\to 0.
\end{align*}
Moreover, convergence in $\dot{\mathfrak{H}}^1$ follows from the dominated convergence theorem \cite[Theorem 2.16]{Simon}, because $||\nabla|P_n\g P_n|\nabla||\leq |\nabla|\g|\nabla|$ and $|\nabla|P_n\g P_n|\nabla|\rightharpoonup^* |\nabla|\g|\nabla|$ in $\mathfrak{S}^1$. Combining convergences in these two topologies, we get $\g_n\rightharpoonup^*\g$ (equivalently, $\gamma-\g_n(\geq 0)\rightharpoonup^*0)$ in $\mathfrak{B}^\alpha$. Therefore, it follows from Lemma \ref{lem:densitylocL1} that $\rho_{\g_n}\to\rho_\g$ in $L^1_{\text{loc}}$.
\end{proof}

\subsection{Kinetic energy inequality}\label{sec: kinetic energy inequality}

We now give a proof of the following kinetic energy inequality.

\begin{theorem}[Kinetic energy inequality]\label{theorem: general LT inequality}
Suppose $ \alpha>0$,
\begin{align*}
\left\{\begin{aligned}
\tfrac{d+2\alpha}{d}&\leq q<\infty &&\textup{when }\alpha\geq\tfrac{d}{2},\\
\tfrac{d+2\alpha}{d}&\leq q\leq\tfrac{d}{d-2\alpha} &&\textup{when }0<\alpha<\tfrac{d}{2}
\end{aligned}\right.
\end{align*}
and $\frac{1}{2q}=\frac{1}{2}-\frac{\theta\alpha}{d}$. Then, we have
\begin{equation}\label{eq: general LT}
\N{\rho_\gamma}_{L^q(\mathbb{R}^d)}\lesssim\|\gamma\|_{\textup{op}}^{1-\theta}\|\gamma\|_{\dot{\mathfrak{H}}^\alpha}^{\theta},\quad\forall \g\in\mathfrak{B}^\alpha.
\end{equation}
\end{theorem}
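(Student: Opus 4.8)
The plan is to follow Rumin's direct argument, as advertised in the excerpt, reducing the operator inequality to a pointwise estimate on the density obtained by splitting the operator at each point into low- and high-frequency parts. Fix $x \in \R^d$ and a frequency threshold $R = R(x) > 0$ to be chosen. Write $\gamma = \gamma_{<R} + \gamma_{\geq R}$ via the Fourier multipliers $\mathbf{1}_{|\nabla| < R}$ and $\mathbf{1}_{|\nabla| \geq R}$ applied on both sides (more precisely, I would first use that $\rho_\gamma(x)$ can be written using the integral kernel $\gamma(x,x)$, which is well-defined pointwise a.e.\ for non-negative $\gamma \in \mathfrak{B}^\alpha$ by Lemma~\ref{lem:densitylocL1} and a Lebesgue-point argument). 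For the low-frequency part I would use $\gamma_{<R} \leq \|\gamma\|_{\textup{op}} \, \mathbf{1}_{|\nabla|<R}$ as operators, so that
$$
\rho_{\gamma_{<R}}(x) \leq \|\gamma\|_{\textup{op}} \cdot \mathbf{1}_{|\nabla|<R}(x,x) = c_d \|\gamma\|_{\textup{op}} R^d.
$$
For the high-frequency part I would bound $\rho_{\gamma_{\geq R}}(x) \leq |\nabla|^{-\alpha}$-conjugation: since $\gamma_{\geq R} = (\mathbf{1}_{|\nabla|\geq R}|\nabla|^{-\alpha}) (|\nabla|^\alpha \gamma |\nabla|^\alpha) (|\nabla|^{-\alpha}\mathbf{1}_{|\nabla|\geq R})$, the diagonal of the kernel is controlled by the operator norm of $|\nabla|^{-\alpha}\mathbf{1}_{|\nabla|\geq R}$, which is $R^{-2\alpha}$, times the local trace density of $|\nabla|^\alpha\gamma|\nabla|^\alpha$. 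This yields a pointwise bound of the schematic form $\rho_\gamma(x) \lesssim \|\gamma\|_{\textup{op}} R^d + R^{-2\alpha} \mathcal{K}(x)$, where $\mathcal{K}$ is an $L^1$-function with $\int \mathcal{K} = \|\gamma\|_{\dot{\mathfrak{H}}^\alpha}$ (after being careful about which operator is diagonalized where — this is the step most likely to need a density/approximation argument, using Lemma~\ref{density} to reduce to smooth finite-rank $\gamma$ where all kernels are honest functions).

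Next I would optimize in $R = R(x)$ pointwise: choosing $R(x)^{d+2\alpha} \sim \mathcal{K}(x)/\|\gamma\|_{\textup{op}}$ balances the two terms and gives
$$
\rho_\gamma(x) \lesssim \|\gamma\|_{\textup{op}}^{\frac{2\alpha}{d+2\alpha}} \, \mathcal{K}(x)^{\frac{d}{d+2\alpha}}.
$$
Raising to the power $q$ and integrating, with the relation $\frac{1}{2q} = \frac12 - \frac{\theta\alpha}{d}$, i.e.\ $\theta = \frac{d}{2\alpha}\left(1-\frac1q\right) = \frac{d}{2\alpha q'}$, one checks that $\frac{dq}{d+2\alpha} = \theta q$ exactly when $q = \frac{d+2\alpha}{d}$; for larger $q$ one must instead first obtain the endpoint $L^{(d+2\alpha)/d}$ estimate and then upgrade. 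So the cleaner route for $q > \frac{d+2\alpha}{d}$ is: prove \eqref{eq: general LT} at the endpoint $q_0 = \frac{d+2\alpha}{d}$ (where $\theta_0 = 1$) by the above, giving $\|\rho_\gamma\|_{L^{q_0}}^{q_0} \lesssim \|\gamma\|_{\dot{\mathfrak{H}}^\alpha}$, and separately the trivial bound $\|\rho_\gamma\|_{L^1} = \|\gamma\|_{\mathfrak{S}^1}$ is not available here since $\gamma$ need not be trace-class; instead I would pair the endpoint estimate with an $L^\infty$-type bound $\|\rho_\gamma\|_{L^\infty} \lesssim \|\gamma\|_{\textup{op}}^{1-\alpha/\cdots}\cdots$ — actually the honest interpolation is between $L^{q_0}$ and the estimate obtained by taking $R$ large, $\rho_\gamma(x) \lesssim R^{-2\alpha}\mathcal{K}(x)$ with $\mathcal K \in L^1$, which is useless alone; so I would run the pointwise optimization directly for each admissible $q$, noting that the constraint $q \leq \frac{d}{d-2\alpha}$ when $\alpha < d/2$ is precisely what makes the Sobolev-type exponent bookkeeping consistent, and for $\alpha \geq d/2$ there is no upper restriction.

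The main obstacle I anticipate is the rigorous justification of the pointwise splitting — that $\rho_\gamma(x) \leq \rho_{\gamma_{<R}}(x) + \rho_{\gamma_{\geq R}}(x)$ and that the high-frequency diagonal is genuinely bounded by $R^{-2\alpha}$ times an $L^1$ density — since for general bounded $\gamma$ the relevant kernels are only distributions. The fix is to work first on the dense subclass of compactly supported smooth finite-rank non-negative operators furnished by Lemma~\ref{density}, prove the inequality there with the diagonal manipulations fully legitimate, and then pass to the limit: the right-hand side is lower semicontinuous (indeed continuous) along the approximating sequence by the convergences $\gamma_n \to \gamma$ in $\dot{\mathfrak{H}}^\alpha$ and $\|\gamma_n\|_{\textup{op}} \leq \|\gamma\|_{\textup{op}}$, and $\rho_{\gamma_n} \to \rho_\gamma$ in $L^1_{\text{loc}}$ gives $\|\rho_\gamma\|_{L^q} \leq \liminf \|\rho_{\gamma_n}\|_{L^q}$ by Fatou. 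A secondary technical point is tracking the Schatten-class and Kato--Seiler--Simon bounds \eqref{Holder Schatten} to make sure the high-frequency operator is genuinely trace-class on the approximants, but this is routine given the tools already assembled in Section~\ref{properties of the space}.
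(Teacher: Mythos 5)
Your high-level strategy (reduce to compactly supported smooth finite-rank non-negative $\gamma$ via Lemma~\ref{density}, split at a frequency scale, estimate the two pieces, and pass to the limit) matches the scaffolding of the paper's argument, which also follows Rumin. But the central high-frequency estimate you propose is false, and this is not merely a technical issue to be patched. You claim that $\rho_{P_{\geq R}\gamma P_{\geq R}}(x) \lesssim R^{-2\alpha}\,\mathcal{K}(x)$ where $\mathcal{K}=\rho_{|\nabla|^\alpha\gamma|\nabla|^\alpha}$. Writing $T=|\nabla|^\alpha\gamma|\nabla|^\alpha$ and $A=P_{\geq R}|\nabla|^{-\alpha}$, the left side is $\|T^{1/2}A^*\delta_x\|_{L^2}^2$; this is \emph{not} bounded by $\|A\|_{\textup{op}}^2\|T^{1/2}\delta_x\|_{L^2}^2 = R^{-2\alpha}\rho_T(x)$, since $T^{1/2}$ and $A^*$ do not commute. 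For a rank-one $\gamma=|\phi\rangle\langle\phi|$ the claim reads $|P_{\geq R}\phi(x)|^2\lesssim R^{-2\alpha}\,|(|\nabla|^\alpha\phi)(x)|^2$, which fails at any zero of $|\nabla|^\alpha\phi$ (such zeros are generic, e.g.\ for real radial $\phi$) where the right side vanishes but $P_{\geq R}\phi(x)$ generically does not. The only pointwise bound one can extract from the conjugation $\gamma_{\geq R}=ATA^*$ uses $\|T\|_{\textup{op}}$ rather than $\rho_T$, and is $x$-independent; optimizing then produces an $L^\infty$ bound on $\rho_\gamma$, not the $L^q$ bound. In addition, you note yourself that the single-scale optimization only lands on the endpoint $q_0=\frac{d+2\alpha}{d}$ and that the interpolation you sketch to reach general $q$ does not close; in particular the constraint $q\leq\frac{d}{d-2\alpha}$ never actually enters your argument, which is a symptom of the gap.

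What the paper does instead is qualitatively different at the key step. Rather than a single-scale decomposition optimized pointwise in $x$, it writes the kinetic energy via the layer-cake identity $\Tr(-\Delta)^\alpha\gamma=\int_0^\infty\Tr(P_e\gamma P_e)\,de$, and proves the pointwise lower bound $\rho_{P_e\gamma P_e}(x)\geq\bigl(\sqrt{\rho_\gamma(x)}-c_1 e^{\frac{1}{2(q-1)}}\bigr)_+^2$. This comes from the reverse triangle inequality for $\frac{1}{|B|^{1/2}}\|\mathbf{1}_B P_e\gamma^{1/2}\|_{\mathfrak{S}^2}$, and crucially the complementary (low-frequency) contribution $\frac{1}{|B|}\|\mathbf{1}_B P_e^\perp\gamma^{1/2}\|_{\mathfrak{S}^2}^2$ is bounded by $\frac{1}{|B|}\|\mathbf{1}_B P_e^\perp|\nabla|^{-\beta}\|_{\mathfrak{S}^2}^2\,\||\nabla|^\beta\gamma|\nabla|^\beta\|_{\textup{op}}$ with $\beta=\frac{d}{2}-\frac{\alpha}{q-1}$: an $x$-independent constant (no density of $|\nabla|^\alpha\gamma|\nabla|^\alpha$ appears), estimated via operator norms and an interpolation $\||\nabla|^\beta\gamma|\nabla|^\beta\|_{\textup{op}}\leq\|\gamma\|_{\dot{\mathfrak{H}}^\alpha}^{\beta/\alpha}$ after normalizing $\|\gamma\|_{\textup{op}}=1$. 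Integrating the resulting lower bound over $e$ and $x$ gives the inequality for the full range of $q$ at once, with the conditions $\frac{d+2\alpha}{d}\leq q$ and $q\leq\frac{d}{d-2\alpha}$ being exactly what ensures $0\leq\beta\leq\alpha$. If you want to rescue your version, the fix is to replace your high-frequency density bound with the operator-norm bound on the low-frequency part and then integrate over all scales as in the layer-cake, rather than picking a single $R(x)$.
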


\begin{proof}
We claim that it is enough to show \eqref{eq: general LT} for compactly supported smooth finite-rank non-negative operators. To see this, we assume that \eqref{eq: general LT} is known for such operators, and let $\gamma\in\mathfrak{B}^\alpha$. If $\gamma\geq 0$, we take a sequence $\{\gamma_n\}$ from Lemma \ref{density}. Then, for any smooth cut-off $\chi\in C_c^\infty(\mathbb{R}^d)$, we have
\begin{align*}
\N{\chi\rho_\gamma}_{L^q}&\leq \N{\rho_{\gamma_n}}_{L^q}+\N{\chi(\rho_\gamma-\rho_{\gamma_n})}_{L^q}\\
&\lesssim\|\gamma_n\|_{\textup{op}}^{1-\theta}\|\gamma_n\|_{\dot{\mathfrak{H}}^\alpha}^{\theta}+o_n(1)\leq \|\gamma\|_{\textup{op}}^{1-\theta}\|\gamma\|_{\dot{\mathfrak{H}}^\alpha}^{\theta}+o_n(1),
\end{align*}
where $o_n(1)\to 0$ as $n\to\infty$. Since $\chi$ is arbitrary, it proves \eqref{eq: general LT} for non-negative $\gamma\in\mathfrak{B}^\alpha$. If $\g\in\mathfrak{B}^\alpha$ is not non-negative, we decompose $\gamma=\gamma_+-\gamma_-$ with $\gamma_\pm\geq 0$. Then, using the kinetic energy inequality for non-negative operators and the trivial inequalities $\N{\g_\pm}_{\textup{op}}\leq\|\gamma\|_{\textup{op}}$ and $\N{\g_\pm}_{\dot{\mathfrak{H}}^\alpha}\leq\|\gamma\|_{\dot{\mathfrak{H}}^\alpha}$, we prove the kinetic energy inequality,
\begin{align*}\|\rho_\gamma\|_{L^q}&\leq\|\rho_{\g_+}\|_{L^q}+\|\rho_{\g_-}\|_{L^q}\\
&\lesssim \|\gamma_+\|_{\textup{op}}^{1-\theta}\|\gamma_+\|_{\dot{\mathfrak{H}}^\alpha}^{\theta}+\|\gamma_-\|_{\textup{op}}^{1-\theta}\|\gamma_-\|_{\dot{\mathfrak{H}}^\alpha}^{\theta}\\
&\lesssim \|\gamma\|_{\textup{op}}^{1-\theta}\|\g\|_{\dH^\alpha}^\theta.
\end{align*}

Suppose that $\gamma$ is a compactly supported smooth finite-rank non-negative operator. Replacing $\gamma$ by $\frac{\gamma}{\|\gamma\|_{\textup{op}}}$, we may assume that $\|\gamma\|_{\textup{op}}=1$. Then, using the layer-cake representation of $(-\Delta)^\alpha$, i.e., $(-\Delta)^\alpha =\int_0^\infty P_ede$,
where $P_e$ is a spectral projection $\mathbf{1}_{\{(-\Delta)^\alpha\geq e\}}$, we write
	\begin{equation}\label{layer-cake}
	\Tr (-\Delta)^\alpha\gamma=\int_0^\infty \Tr(P_e\gamma P_e)\:de=\int_0^\infty\left(\int_{\mathbb{R}^d} \rho_e(x)\:dx\right)de,
	\end{equation}
where $\rho_e=\rho_{P_e\gamma P_e}$ is the density of the operator $P_e\gamma P_e$.

We claim that $\rho_e(x)$ has a lower bound, 
	\begin{equation}\label{LT claim}
	\rho_e(x)\geq \left( \sqrt{\rho(x)}-c_1 e^{\frac{1}{2(q-1)}}\right)_+^2,
	\end{equation}
where $A_+=\max\{A,0\}$, $\beta=\frac{d}{2}-\frac{\alpha}{q-1}$, and $c_1=(c\|\gamma\|_{\dot{\mathfrak{H}}^\alpha})^{\frac{\beta}{2\alpha}}$ for some constant $c>0$. Indeed, for a ball $B=B_r(x)$ of radius $r$ centered at $x$ in $\mathbb{R}^d$, by the triangle inequality, we have
	\begin{align}
	\frac{1}{|B|}\int_B\rho_e(y)\:dy&=\frac{1}{|B|}\Tr(\mathbf{1}_BP_e\gamma P_e\mathbf{1}_B)=\frac{1}{|B|}\big\|\mathbf{1}_{B}P_e\gamma^{\frac{1}{2}}\big\|_{\mathfrak{S}^2}^2\nonumber\\
	&\geq\bigg\{\frac{1}{|B|^\frac{1}{2}}\big\|\mathbf{1}_{B}\gamma^{\frac{1}{2}}\big\|_{\mathfrak{S}^2}-\frac{1}{|B|^\frac{1}{2}}\|\mathbf{1}_{B}P_e^\perp\gamma^{\frac{1}{2}}\big\|_{\mathfrak{S}^2}\bigg\}_+^2,\label{triangle inequality}
	\end{align}
where $|B|$ denotes the volume of the ball $B$ and $P_e^\perp:=\mathbf{1}_{\{(-\Delta)^\alpha< e\}}$ is the frequency projection orthogonal to $P_e$.
For the first term in the bracket $\{\cdots\}_+^2$ in \eqref{triangle inequality}, we use 
	\[\frac{1}{|B|}\big\|\mathbf{1}_{B}\gamma^{\frac{1}{2}}\big\|_{\mathfrak{S}^2}^2=\frac{1}{|B|}\Tr(\mathbf{1}_B\gamma\mathbf{1}_B)=\frac{1}{|B|}\int_B\rho_\gamma(y)\:dy\underset{r\to 0}\longrightarrow\rho_\gamma(x).\]
For the second term, by trivial inequalities, we prove that 
	\begin{align*}
	\frac{1}{|B|}\|\mathbf{1}_{B}P_e^\perp\gamma^{\frac{1}{2}}\|_{\mathfrak{S}^2}^2&=\frac{1}{|B|}\Tr(\mathbf{1}_BP_e^\perp\gamma P_e^\perp\mathbf{1}_{B})\\
	&\leq\frac{1}{|B|}\|\mathbf{1}_{B}P_e^\perp|\nabla|^{-\beta}\|_{\mathfrak{S}^2}^2\||\nabla|^\beta\gamma|\nabla|^\beta\|_{\textup{op}}\\
	&=\||\xi|^{-\beta}\mathbf{1}_{\{|\xi|\leq e^{\frac{1}{2\alpha}}\}}\|_{L^2_\xi}^2\||\nabla|^\beta\gamma|\nabla|^\beta\|_{\textup{op}}\\
	&\sim_{d,\alpha,q} e^{\frac{1}{q-1}}\||\nabla|^\beta\gamma|\nabla|^\beta\|_{\textup{op}}.
	\end{align*}
Note that $\beta=\frac{d}{2}-\frac{\alpha}{q-1}$ is non-negative, since we assumed that $q\geq 1+\frac{2\alpha}{d}$. When $\beta=0$, we have
	$$\frac{1}{|B|}\|\mathbf{1}_{B}P_e^\perp\gamma^{\frac{1}{2}}\|_{\mathfrak{S}^2}^2\lesssim_{d,\alpha} e^{\frac{1}{q-1}}.$$	
When $\beta>0$, interpolating the above two inequalities and the trivial embedding $\mathfrak{S}^1\hookrightarrow \mathcal{L}$, we prove that 
	$$\frac{1}{|B|}\|\mathbf{1}_{B}P_e^\perp\gamma^{\frac{1}{2}}\|_{\mathfrak{S}^2}^2\lesssim_{d,\alpha} e^{\frac{1}{q-1}}\N{|\nabla|^\alpha\gamma|\nabla|^\alpha}_{\textup{op}}^{\frac{\beta}{\alpha}}\leq e^{\frac{1}{q-1}}\|\gamma\|_{\dot{\mathfrak{H}}^\alpha}^{\frac{\beta}{\alpha}}.$$
Thus, sending $r\to0$ in \eqref{triangle inequality}, we prove the claim \eqref{LT claim}.

Going back to \eqref{layer-cake}, inserting the pointwise estimate \eqref{LT claim} and by the choice of $\alpha$ and $\beta$ ($\Rightarrow\frac{\beta(q-1)}{\alpha}=q\theta-1$), we get
	\begin{align*}
	\Tr (-\Delta)^\alpha\gamma&\geq \int_0^\infty\left(\int_{\mathbb{R}^d}\left( \sqrt{\rho(x)}-c_1 e^{\frac{1}{2(q-1)}}\right)_+^2\:dx\right)de\\
	&=\int_{\mathbb{R}^d}\bigg[\int_0^{\big(\frac{\rho(x)}{c_1^2}\big)^{q-1}}\rho(x)-2c_1\sqrt{\rho(x)}e^{\frac{1}{2(q-1)}}+c_1^2e^{\frac{1}{q-1}}\:de\bigg]\:dx\\
	&\sim\frac{1}{c_1^{2(q-1)}} \int_{\mathbb{R}^d}\rho(x)^qdx\sim \|\gamma\|_{\dot{\mathfrak{H}}^\alpha}^{-(q\theta-1)} \int_{\mathbb{R}^d}\rho(x)^qdx.
	\end{align*}
Thus, the kinetic energy inequality \eqref{eq: general LT} is proved.
\end{proof}

\subsection{Mid-frequency approximation for density functions}\label{sec: Low frequency approximation}

We close this section showing the mid-frequency approximation for density functions. This lemma will be used in the proof (Step 2) of the profile decomposition (Proposition \ref{profile decomposition}).

\begin{lemma}\label{mid-frequency approximation}
Suppose $ \alpha>0$ and
\begin{align*}
\left\{\begin{aligned}
\tfrac{d+2\alpha}{d}&<q<\infty &&\textup{when }\alpha\geq\tfrac{d}{2},\\
\tfrac{d+2\alpha}{d}&<q<\tfrac{d}{d-2\alpha} &&\textup{when }0<\alpha<\tfrac{d}{2}.
\end{aligned}\right.
\end{align*}
For large $A\geq 1$, we denote by $P_{med}=P_{med;A}$ the mid-frequency projection, that is, the Fourier multiplier with the symbol $\chi(\frac{\xi}{A})-\chi(A\xi)$, where $\chi \in C_c^\infty(\mathbb{R}^d)$ is a smooth cut-off such that $\chi(x)=1$ on $\{|x|\leq1\}$ but $\chi(x)=0$ on $\{|x|\geq2\}$. Then, there exists $\delta>0$ such that if $\gamma\in\mathfrak{B}^\alpha$ and $A>0$ is large enough, then
$$\|\rho_{\gamma}-\rho_{P_{med}\gamma P_{med}}\|_{L^q(\mathbb{R}^d)}\lesssim A^{-\delta},$$
where the implicit constant depends only on $\|\gamma\|_{\mathfrak{B}^\alpha}$.
\end{lemma}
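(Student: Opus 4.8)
The plan is to split the frequency-localization error into a low-frequency piece and a high-frequency piece and estimate each by the kinetic energy inequality applied to a suitably modified operator, exploiting the strict inequalities in the hypotheses to gain a genuine power of $A$. Write $P_{low} = \chi(A\cdot)$ (Fourier multiplier with symbol $\chi(A\xi)$, supported in $\{|\xi| \lesssim A^{-1}\}$) and $P_{high} = 1 - \chi(\cdot/A)$ (supported in $\{|\xi| \gtrsim A\}$), so that $\gamma - P_{med}\gamma P_{med}$ decomposes, via the algebraic identity $\gamma - P_{med}\gamma P_{med} = (1-P_{med})\gamma + P_{med}\gamma(1-P_{med})$ and $1 - P_{med} = P_{low} + P_{high}$, into four terms each of which is $\gamma$ sandwiched between projections at least one of which is $P_{low}$ or $P_{high}$. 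For the density functions I will use that $\rho_{A\gamma B}$ is controlled once $A\gamma B$ is a nice operator; more precisely, for each such term $\gamma_{\sharp} = \Pi \gamma \Pi'$ I will bound $\|\rho_{\gamma_\sharp}\|_{L^q}$ by reducing to $\rho$ of a nonnegative operator $\Pi \gamma \Pi^* \le \|\gamma\|_{\mathrm{op}} \Pi \Pi^*$, or by a Cauchy–Schwarz/Hölder manipulation $\|\rho_{\Pi\gamma\Pi'}\|_{L^q} \lesssim \|\rho_{\Pi\gamma\Pi^*}\|_{L^q}^{1/2}\|\rho_{\Pi'\gamma(\Pi')^*}\|_{L^q}^{1/2}$, so that only the diagonal terms $P_{low}\gamma P_{low}$ and $P_{high}\gamma P_{high}$ really need to be estimated.

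For the high-frequency term, the point is that $P_{high}\gamma P_{high}$ is an admissible operator for Theorem~\ref{theorem: general LT inequality}: it is bounded self-adjoint with $\|P_{high}\gamma P_{high}\|_{\mathrm{op}}\le\|\gamma\|_{\mathrm{op}}$, and moreover
$$
\|P_{high}\gamma P_{high}\|_{\dot{\mathfrak{H}}^\alpha} = \||\nabla|^\alpha P_{high}\gamma P_{high}|\nabla|^\alpha\|_{\mathfrak{S}^1} \le \|P_{high}|\nabla|^{-\alpha'}\|_{\mathrm{op}}^2 \, \||\nabla|^{\alpha+\alpha'}\gamma|\nabla|^{\alpha+\alpha'}\|_{\mathfrak{S}^1}
$$
is not directly useful since we only control $\|\gamma\|_{\dot{\mathfrak{H}}^\alpha}$. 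Instead I will interpolate: apply \eqref{eq: general LT} with a slightly smaller exponent $\tilde q \in (\frac{d+2\alpha}{d}, q)$, for which $\tilde\theta < \theta$, so that $\|\rho_{P_{high}\gamma P_{high}}\|_{L^{\tilde q}} \lesssim \|\gamma\|_{\mathfrak{B}^\alpha}$; simultaneously, on the support of $P_{high}$ one has $|\nabla|^{\alpha}\ge A^{\varepsilon}|\nabla|^{\alpha-\varepsilon}$-type gains, giving $\|\rho_{P_{high}\gamma P_{high}}\|_{L^{q_1}}\lesssim A^{-\sigma}\|\gamma\|_{\mathfrak{B}^\alpha}$ for some exponent $q_1$ obtained from \eqref{eq: general LT} with a larger Sobolev exponent $\alpha' > \alpha$ whose $\dot{\mathfrak{H}}^{\alpha'}$-norm of $P_{high}\gamma P_{high}$ costs a power $A^{\alpha'-\alpha}$ but is then beaten by... — rather, the cleanest route is: $\||\nabla|^{\alpha}P_{high}\gamma P_{high}|\nabla|^{\alpha}\|_{\mathfrak{S}^1}$ with a factor $A^{-2\delta}$ extracted equals $\||\nabla|^{\alpha+\delta}(A^{-\delta}|\nabla|^{-\delta}P_{high})\gamma(A^{-\delta}|\nabla|^{-\delta}P_{high})|\nabla|^{\alpha+\delta}\|_{\mathfrak{S}^1}$ and $\|A^{-\delta}|\nabla|^{-\delta}P_{high}\|_{\mathrm{op}}\le 1$, so $\|P_{high}\gamma P_{high}\|_{\dot{\mathfrak{H}}^{\alpha+\delta}} \lesssim A^{-2\delta}\|\gamma\|_{\dot{\mathfrak{H}}^\alpha}$; plug this into \eqref{eq: general LT} at Sobolev level $\alpha+\delta$ with the matching exponent $q$ (which is still admissible for $\delta$ small since the hypotheses are strict), and one gets $\|\rho_{P_{high}\gamma P_{high}}\|_{L^q}\lesssim A^{-2\delta\theta'}\|\gamma\|_{\mathfrak{B}^\alpha}$.

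For the low-frequency term, a dual gain is available: on the support of $P_{low}$ one has $|\nabla|^{\alpha}\le A^{-\delta}|\nabla|^{\alpha-\delta}$ is false, but $|\nabla|^{-\alpha}P_{low}$ is a bounded operator, and the relevant gain comes from the $L^q$-norm itself — low frequencies are spread out, so $\|\rho_{P_{low}\gamma P_{low}}\|_{L^q}$ is small. Concretely I will write $\|\rho_{P_{low}\gamma P_{low}}\|_{L^q}\le \|\rho_{P_{low}\gamma P_{low}}\|_{L^{q_0}}^{1-\eta}\|\rho_{P_{low}\gamma P_{low}}\|_{L^\infty}^{\eta}$ for a small $q_0<q$ where \eqref{eq: general LT} applies, controlling the $L^{q_0}$ factor by $\|\gamma\|_{\mathfrak{B}^\alpha}$; and bound $\|\rho_{P_{low}\gamma P_{low}}\|_{L^\infty}$ pointwise: by Bessel/Cauchy–Schwarz, $\rho_{P_{low}\gamma P_{low}}(x) \le \|\gamma\|_{\mathrm{op}}\,\mathrm{Tr}(P_{low}|\delta_x\rangle\langle\delta_x| P_{low})$-type bound, i.e. $\le \|\gamma\|_{\mathrm{op}}\|P_{low}(x,\cdot)\|_{L^2}^2 = \|\gamma\|_{\mathrm{op}}\,\|\widehat{\chi(A\cdot)}\|_{L^2}^2/(2\pi)^d \sim A^{-d}\|\gamma\|_{\mathrm{op}}$ by the Kato–Seiler–Simon estimate \eqref{Holder Schatten}. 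Interpolating gives $\|\rho_{P_{low}\gamma P_{low}}\|_{L^q}\lesssim A^{-d\eta}\|\gamma\|_{\mathfrak{B}^\alpha}$. Finally, for the cross terms I use the Cauchy–Schwarz bound mentioned above to dominate them by the geometric mean of a diagonal gain term and a bounded term, so each cross term also carries a positive power $A^{-\delta'}$. Collecting the four estimates and taking $\delta$ to be the smallest exponent produced finishes the proof.

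The main obstacle I anticipate is the bookkeeping in the high-frequency estimate: one must verify that moving from Sobolev level $\alpha$ to $\alpha+\delta$ keeps $q$ inside the admissible range of Theorem~\ref{theorem: general LT inequality} (this is exactly why the hypotheses here are strict inequalities, not $\le$, matching Remark~\ref{remark 3}) and that the operator-norm inequality $\|A^{-\delta}|\nabla|^{-\delta}P_{high}\|_{\mathrm{op}}\le 1$ together with the Birman–Solomjak/dominated-convergence machinery of Section~\ref{properties of the space} genuinely yields $\|P_{high}\gamma P_{high}\|_{\dot{\mathfrak{H}}^{\alpha+\delta}}\lesssim A^{-2\delta}\|\gamma\|_{\dot{\mathfrak{H}}^{\alpha}}$ for \emph{all} $\gamma\in\mathfrak{B}^\alpha$, including those not of trace class — here a density argument via Lemma~\ref{density} reduces everything to compactly supported smooth finite-rank operators, for which the trace manipulations are unambiguous.
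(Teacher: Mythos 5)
Your overall scheme---split $1-P_{med}$ into low- and high-frequency pieces, reduce to diagonal terms via a Cauchy--Schwarz argument, and estimate each by applying the kinetic energy inequality at a perturbed Sobolev exponent $\alpha\pm\epsilon$, exploiting the strict inequalities in the hypotheses---is indeed the same skeleton as the paper's proof. Your low-frequency argument is a genuine variant (the paper shifts the Sobolev index up to $\alpha+\epsilon$ and uses $\||\nabla|^{\epsilon}P_{low}\|_{\mathrm{op}}\lesssim A^{-\epsilon}$, whereas you interpolate the density against an $L^\infty$ bound $\|\rho_{P_{low}\gamma P_{low}}\|_{L^\infty}\lesssim A^{-d}\|\gamma\|_{\mathrm{op}}$ coming from the $L^2$-norm of the kernel of $P_{low}$); both are correct and of comparable difficulty.

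The gap is in the high-frequency estimate, where the direction of the Sobolev shift is wrong. You claim $\|P_{high}\gamma P_{high}\|_{\dot{\mathfrak{H}}^{\alpha+\delta}}\lesssim A^{-2\delta}\|\gamma\|_{\dot{\mathfrak{H}}^{\alpha}}$, but this is false: taking $\gamma=|\phi\rangle\langle\phi|$ with $\widehat{\phi}$ supported on $\{|\xi|\sim 2A\}$, one has $P_{high}\gamma P_{high}=\gamma$ and $\|\gamma\|_{\dot{\mathfrak{H}}^{\alpha+\delta}}\sim A^{2\delta}\|\gamma\|_{\dot{\mathfrak{H}}^{\alpha}}$, off from your bound by $A^{4\delta}$. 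The algebraic identity you write down,
\[
A^{-2\delta}\,\||\nabla|^{\alpha}P_{high}\gamma P_{high}|\nabla|^{\alpha}\|_{\mathfrak{S}^1}
=\||\nabla|^{\alpha+\delta}\bigl(A^{-\delta}|\nabla|^{-\delta}P_{high}\bigr)\gamma\bigl(A^{-\delta}|\nabla|^{-\delta}P_{high}\bigr)|\nabla|^{\alpha+\delta}\|_{\mathfrak{S}^1},
\]
is an identity for the modified operator $\widetilde{\gamma}=A^{-2\delta}|\nabla|^{-\delta}P_{high}\gamma P_{high}|\nabla|^{-\delta}$, not for $P_{high}\gamma P_{high}$ itself; since $\rho_{\widetilde{\gamma}}\neq\rho_{P_{high}\gamma P_{high}}$, feeding $\widetilde{\gamma}$ into the kinetic energy inequality says nothing about $\|\rho_{P_{high}\gamma P_{high}}\|_{L^q}$. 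On high frequencies the operator $|\nabla|^{\delta}P_{high}$ is \emph{unbounded}, so no smallness can be extracted by raising the Sobolev index. The correct move (and what the paper does) is to \emph{lower} the Sobolev index: since $\||\nabla|^{-\epsilon}P_{high}\|_{\mathrm{op}}\leq A^{-\epsilon}$, one gets
\[
\|P_{high}\gamma P_{high}\|_{\dot{\mathfrak{H}}^{\alpha-\epsilon}}
=\||\nabla|^{-\epsilon}P_{high}\,|\nabla|^{\alpha}\gamma|\nabla|^{\alpha}\,P_{high}|\nabla|^{-\epsilon}\|_{\mathfrak{S}^1}
\leq A^{-2\epsilon}\|\gamma\|_{\dot{\mathfrak{H}}^{\alpha}},
\]
and one then applies the kinetic energy inequality at level $\alpha-\epsilon$ with the \emph{same} $q$, which is admissible for small $\epsilon$ precisely because of the strict inequality $q<\tfrac{d}{d-2\alpha}$ (upper end). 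Symmetrically, the strict lower bound $q>\tfrac{d+2\alpha}{d}$ is what justifies moving up to $\alpha+\epsilon$ in the low-frequency piece. You correctly intuited the role of the strictness of the hypotheses, but attributed the upper-end strictness to the high-frequency estimate with the shift in the wrong direction; with the sign corrected, the rest of your argument goes through.
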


\begin{proof}
Repeating the reduction in the beginning of the proof of Theorem \ref{theorem: general LT inequality}, we may assume that $\gamma$ is a compactly supported smooth finite-rank non-negative operator. We denote $P_{\leq A}:=\chi(\frac{\sqrt{-\Delta}}{A})$ and $P_{>A}:=1-P_{\leq A}$. First, we claim that 
\begin{equation}\label{med frequency approx claim}
\|\rho_{\gamma}-\rho_{P_{\leq A}\gamma P_{\leq A}}\|_{L^q}\lesssim A^{-\delta}.
\end{equation}
To show the claim, by the eigenfunction expansion $\gamma=\sum|\phi_j\rangle\langle\phi_j|=\sum_{j=1}^J|\phi_j\rangle\langle\phi_j|$, where $J<\infty$ and $\{\phi_j\}_{j=1}^J$ is an $L^2$-orthogonal set of compactly supported smooth functions, we write
$$(\rho_{\gamma}-\rho_{P_{\leq A}\gamma P_{\leq A}})(x)=\sum|\phi_j(x)|^2-|P_{\leq A}\phi_j(x)|^2.$$
We observe that by Cauchy-Schwarz, 
\begin{align*}
|(\rho_{\gamma}-\rho_{P_{\leq A}\gamma P_{\leq A}})(x)|&\leq\sum |P_{>A}\phi_j(x)|\left(|\phi_j(x)|+|P_{\leq A}\phi_j(x)|\right)\\
&\leq2\sum |P_{>A}\phi_j(x)||\phi_j(x)|\\
&\leq2\left\{\sum |P_{>A}\phi_j(x)|^2\right\}^{1/2}\left\{\sum |\phi_j(x)|^2\right\}^{1/2}\\
&=2\sqrt{\rho_{P_{>A}\gamma P_{>A}}(x)}\sqrt{\rho_{\gamma}(x)}.
\end{align*}
Hence, by the kinetic energy inequality \eqref{eq: general LT}, we get the bound, 
$$\|\rho_{\gamma}-\rho_{P_{\leq A}\gamma P_{\leq A}}\|_{L^q}\leq 2\|\rho_{P_{>A}\gamma P_{>A}}\|_{L^q}^{1/2}\|\rho_{\gamma}\|_{L^q}^{1/2}\lesssim \|\rho_{P_{>A}\gamma P_{>A}}\|_{L^q}^{1/2}\|\gamma\|_{\textup{op}}^{\frac{1-\theta}{2}}\|\gamma\|_{\dot{\mathfrak{H}}^\alpha}^{\frac{\theta}{2}}.$$
Thus, for the claim, it suffices to show that $\|\rho_{P_{>A}\gamma P_{>A}}\|_{L^q}\lesssim A^{-2\delta}$. Indeed, by the assumption $\frac{1}{q}>\max\{\frac{d-2\alpha}{d},0\}$, there exists $\epsilon>0$ such that $\frac{1}{2q}=\frac{1}{2}-\frac{\theta_\epsilon(\alpha-\epsilon)}{d}$ and $\theta_\epsilon:=\frac{d}{2q'(\alpha-\epsilon)}<1$. Then, by the kinetic energy inequality \eqref{eq: general LT} again but with $\theta_\epsilon>\theta$, we get
\begin{align*}
\N{\rho_{P_{>A}\gamma P_{>A}}}_{L^q}&\lesssim\|P_{>A}\gamma P_{>A}\|_{\textup{op}}^{1-\theta_\epsilon}\|P_{>A}\gamma P_{>A}\|_{\dot{\mathfrak{H}}^{\alpha-\epsilon}}^{\theta_\epsilon}\\
&\leq\|\gamma\|_{\textup{op}}^{1-\theta_\epsilon}A^{-2\epsilon\theta_\epsilon}\|\gamma\|_{\dot{\mathfrak{H}}^{\alpha}}^{\theta_\epsilon}.
\end{align*}
Therefore, taking $\delta=\epsilon\theta_\epsilon$, we prove the claim.

Repeating the above argument, we write
\begin{align*}
\|\rho_{P_{\leq A}\gamma P_{\leq A}}-\rho_{P_{med}\gamma P_{med}}\|_{L^q}&\leq 2\|\rho_{P_{\leq1/A}\gamma P_{\leq1/A}}\|_{L^q}^{1/2}\|\rho_{P_{\leq A}\gamma P_{\leq A}}\|_{L^q}^{1/2}\\
&\lesssim \|\rho_{P_{\leq1/A}\gamma P_{\leq1/A}}\|_{L^q}^{1/2}\|\gamma\|_{\textup{op}}^{\frac{1-\theta}{2}}\|\gamma \|_{\dot{\mathfrak{H}}^\alpha}^{\frac{\theta}{2}}.
\end{align*}
For the former factor, we take $\tilde{\theta}_\epsilon:=\frac{d}{2q'(\alpha+\epsilon)}$, which is non-negative for small $\epsilon>0$ by the assumption $q>\frac{d+2\alpha}{d}$, and then apply the kinetic energy inequality with $\tilde{\theta}_\epsilon$, 
\begin{align*}
\|\rho_{P_{\leq A}\gamma P_{\leq A}}-\rho_{P_{med}\gamma P_{med}}\|_{L^q}&\lesssim\|P_{\leq 1/A}\gamma P_{\leq 1/A}\|_{\textup{op}}^{\frac{1-\tilde{\theta}_\epsilon}{2}}\|P_{\leq 1/A}\gamma P_{\leq 1/A}\|_{\dot{\mathfrak{H}}^{\alpha+\epsilon}}^{\frac{\tilde{\theta}_\epsilon}{2}}\|\gamma\|_{\textup{op}}^{\frac{1-\theta}{2}}\|\gamma \|_{\dot{\mathfrak{H}}^\alpha}^{\frac{\theta}{2}}\\
&\lesssim A^{-\epsilon\tilde{\theta}_\epsilon} \|\gamma\|_{\textup{op}}^{\frac{2-\theta-\tilde{\theta}_\epsilon}{2}}\|\gamma \|_{\dot{\mathfrak{H}}^\alpha}^{\frac{\theta+\tilde{\theta}_\epsilon}{2}}\lesssim A^{-\delta}.
\end{align*}
Thus, combining with \eqref{med frequency approx claim}, we complete the proof of the lemma.
\end{proof}

\vspace{10pt}
\section{Profile decomposition associated with the kinetic energy inequality}
We establish the profile decomposition for bounded sequences of non-negative operators in the operator space $\mathfrak{B}^1$. {The profile decomposition is to capture the lack of compactness of embedding. Indeed, for \eqref{eq:LT} the noncompactness of spatial translation symmetry is responsible. The profile decomposition for many inequalities of functions are intensively studied and so now rather regarded as a common knowledge. Especially, if the embedding is inhomogeneous, and so no scaling invariance is involved, then the proof is fairly standard. Here, we work on an operator valued inequality. There arise several technical issues to be cautious. They are due to noncommutativity of operators, density argument, and self-adjoint truncation of operators. Hence, we include the proof for completeness.}


\begin{theorem}[Profile decomposition]\label{profile decomposition}
Suppose that the condition \eqref{admissible q} holds, that is,
\begin{align*}
\left\{\begin{aligned}
\tfrac{d+2}{d}&< q<\infty &&\textup{when }d=1,2,\\
\tfrac{d+2}{d}&<q<\tfrac{d}{d-2} &&\textup{when }d\geq 3,
\end{aligned}\right.
\end{align*}
Let $\{\gamma_n\}_{n=1}^\infty$ be a bounded sequence of non-negative operators in $\mathfrak{B}^1$.
Then, there exist $J^* \in \{0,1,2,\cdots \}\cup \{\infty\}$ and a subsequence of $\{\gamma_n\}_{n=1}^\infty$ (but still denoted by $\{\gamma_n\}_{n=1}^\infty$), non-negative operators $\{w^j \}_{j=1}^{J^*} \subset \mathfrak{B}^1$,  remainder operators $\{ R_n^J : n \in \mathbb{N},\ 1\le j \le J^* \} \subset \mathfrak{B}^1$ and translation parameters $\{x_n^j \in \mathbb{R}^d : n \in \mathbb{N},\ 1\le j \le J^*\}$ such that
	\begin{equation}\label{eq: profile decomposition}
	\gamma_n=\sum_{j=1}^J (\tau_{x_n^j}) w^j (\tau_{x_n^j})^*+R_n^J \qquad \text{for each } J \le J^*\footnote{Conventionally, if $ J^*=\infty $, $ 1\le J\le J^* $ this means $ 1 \le J <\infty $. Or if $J^*=0$, the summation is empty.},
	\end{equation}
where $\tau_y$ denotes the spatial translation operator by $ y \in \mathbb R^d$, i.e., $(\tau_y f)(x)\colonequals f(x-y)$, and the profile decomposition \eqref{eq: profile decomposition} satisfies the following properties.
\begin{enumerate}
\item (Asymptotic orthogonality of profiles)
\begin{align}
|x_n^j-x_n^{j'}|&\underset{n\to\infty}\longrightarrow \infty \quad\text{for all } j\neq j',\label{i-1}\\ 
(\tau_{x_n^j})^* R_n^J (\tau_{x_n^j})&\underset{n\to\infty}{\rightharpoonup^*} 0 \quad\text{ in }\mathfrak{B}^1 \text{ for each } 1\leq j\leq J^*. \label{i-2}
\end{align}
\item (Trace) It is obvious that 
\begin{equation}\label{ii-1}
\textup{Tr}(|\nabla|\gamma_n|\nabla|)=\sum_{j=1}^J\textup{Tr}(|\nabla| w^j|\nabla|)+\textup{Tr}(|\nabla| R_n^J|\nabla|).
\end{equation}
The remainder operator $R_n^J$ is not necessarily non-negative (see Remark \ref{remark on the profile decomposition} (2)). However, its trace is asymptotically non-negative, i.e., for each $J$, we have
\begin{equation}\label{ii-2}
\limsup_{n\to\infty}\textup{Tr}(|\nabla| R_n^J|\nabla|)\geq0.
\end{equation}
\item (Operator norm) For each $J$, we have
\begin{align}
\limsup_{n\to\infty}\|\gamma_n\|_{\textup{op}}&\geq \|w^j\|_{\textup{op}},\quad\text{for each }1\leq j\leq J,\label{iii-1}\\
\limsup_{n\to\infty}\|\gamma_n\|_{\textup{op}}&\geq\limsup_{n\to\infty}\|R_n^J\|_{\textup{op}}.\label{iii-2}
\end{align}
\item (Asymptotic orthogonality for the potential energy)
\begin{equation}\label{v}
\lim_{J\to J^*}\limsup_{n\to \infty}\bigg|\N{\rho_{\gamma_n}}_{L^q(\mathbb{R}^d)}^q-\sum_{j=1}^J\N{\rho_{w^j}}_{L^q(\mathbb{R}^d)}^q\bigg|=0.
\end{equation}
\item (Characterization of compactness)
\begin{equation}\label{vi}
\lim_{J\to J^*}\limsup_{n\to\infty}\|\rho_{R_n^J}\|_{L^q(\mathbb{R}^d)}=0.
\end{equation}
\end{enumerate}
\end{theorem}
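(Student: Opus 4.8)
The plan is to build the decomposition iteratively, extracting one profile at a time by exploiting the noncompactness coming from spatial translations, and to control everything via the weak-$*$ topology of $\mathfrak{B}^1$ together with a weak-$*$ convergence structure for the associated density functions. First I would fix a subsequence along which $\|\gamma_n\|_{\textup{op}}$ and $\|\gamma_n\|_{\dot{\mathfrak{H}}^1}$ converge; boundedness in $\mathfrak{B}^1$ gives, by Banach–Alaoglu in the weak-$*$ topology (using the duality $(\mathcal L,\|\cdot\|_{\textup{op}})=(\mathfrak S^1)^*$ and $(\dot{\mathfrak H}^1,\|\cdot\|_{\dot{\mathfrak H}^1})=(\mathfrak S^\infty)^*$), a weak-$*$ limit point. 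Set $R_n^0:=\gamma_n$. Inductively, given $R_n^{J-1}$, consider the concentration functional $\mu_J:=\lim_{n}\sup_{y}\operatorname{Tr}(\chi(\cdot-y)R_n^{J-1}\chi(\cdot-y))$-type quantity controlling the $L^q$ mass of $\rho_{R_n^{J-1}}$; if it is zero, terminate with $J^*=J-1$; otherwise choose translations $x_n^J$ capturing a positive fraction of the mass, pass to a further subsequence so that $(\tau_{x_n^J})^* R_n^{J-1}(\tau_{x_n^J})\rightharpoonup^* w^J$ in $\mathfrak{B}^1$, and define $R_n^J:=R_n^{J-1}-(\tau_{x_n^J})w^J(\tau_{x_n^J})^*$.

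To make each extraction step work I would need two ingredients from the preliminaries. The first is a \emph{local-compactness/defect-of-compactness} principle for density functions: if $\eta_n\rightharpoonup^*0$ in $\mathfrak{B}^1$ with $\eta_n\geq 0$, then $\rho_{\eta_n}\to 0$ in $L^1_{\textup{loc}}$ by Lemma~\ref{lem:densitylocL1}, which forces any nonvanishing $L^q$-mass of $\rho_{\eta_n}$ to escape to spatial infinity — this is exactly what supplies the translation parameters and the divergence $|x_n^j-x_n^{j'}|\to\infty$ in \eqref{i-1} (two profiles extracted from the same spatial window would contradict $w^{j'}=0$). The second is the \emph{mid-frequency approximation} of Lemma~\ref{mid-frequency approximation}: replacing $\rho_{\gamma}$ by $\rho_{P_{med}\gamma P_{med}}$ up to $O(A^{-\delta})$ in $L^q$, together with the fact that $P_{med}$ has a smooth compactly-supported-away-from-zero symbol so $P_{med}\eta P_{med}$ has a locally trace-class, equicontinuous kernel; this upgrades $L^1_{\textup{loc}}\to L^q_{\textup{loc}}$ smallness and lets me run a Fréchet–Kolmogorov-type argument to conclude \eqref{vi}. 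The asymptotic orthogonality \eqref{i-2} follows because, by \eqref{i-1}, conjugating $R_n^J$ by $\tau_{x_n^j}$ sends all already-extracted profiles $w^{j'}$ ($j'\le J$) to weak-$*$ zero (they run off to infinity relative to $x_n^j$), and the residual was defined to be weak-$*$ null at $x_n^j$ by construction.

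The properties (2)–(4) are then bookkeeping. For \eqref{ii-1} I would use translation-invariance of $\operatorname{Tr}(|\nabla|\cdot|\nabla|)$ and linearity of the decomposition; \eqref{ii-2} follows because $R_n^J\rightharpoonup^*$-limits of its translates are the non-negative $w^{j}$ with $j>J$ plus a weak-$*$ null part, and weak-$*$ lower semicontinuity of $\operatorname{Tr}(|\nabla|\cdot|\nabla|)$ on non-negative operators gives $\liminf \operatorname{Tr}(|\nabla|R_n^J|\nabla|)\ge 0$ after peeling the extracted pieces (note the remainder itself need not be non-negative, only its trace is asymptotically non-negative — this is the subtlety flagged in Remark~\ref{remark on the profile decomposition}, and it forces me to keep each $w^j$ non-negative and self-adjoint via a spectral-truncation construction rather than just taking the naive weak-$*$ limit). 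For the operator-norm bounds \eqref{iii-1}–\eqref{iii-2} I would use that weak-$*$ limits do not increase the operator norm, and that $R_n^J=R_n^{J-1}-(\text{non-negative piece})$ combined with the fact that the extracted piece's weak-$*$ limit sits below $\limsup\|\gamma_n\|_{\textup{op}}\cdot\mathbf{1}$ on the relevant spectral subspace; here I need a little care since subtracting a non-negative operator can in principle increase $\|\cdot\|_{\textup{op}}$, so I would argue via the spectral projections defining $w^j$ to ensure $0\le w^j\le \limsup\|\gamma_n\|_{\textup{op}}$ in the operator order. Finally \eqref{v} is the energy-splitting (Brezis–Lieb type) identity for $\rho$ in $L^q$: at each step the near-orthogonality of supports (in the mid-frequency approximation, the profiles live in translated windows going to infinity) gives $\|\rho_{R_n^{J-1}}\|_{L^q}^q=\|\rho_{w^J}\|_{L^q}^q+\|\rho_{R_n^J}\|_{L^q}^q+o_n(1)$, and summing the telescoping relation together with the termination criterion $\mu_J\to 0$ forces $\sum_j \|\rho_{w^j}\|_{L^q}^q<\infty$ and yields \eqref{v}. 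The main obstacle I expect is precisely the interplay between the non-negativity/self-adjointness constraints on the profiles and the fact that asymptotic orthogonality is invisible in the operator norm (unlike the classical function-space profile decompositions): I must extract $w^j$ not as a bare weak-$*$ limit but through a self-adjoint, order-preserving truncation, verify that the remainder still obeys the weak-$*$ vanishing \eqref{i-2} and the density estimate \eqref{vi}, and show the iteration actually terminates or produces a summable profile list — the convergence of $\sum_j\|\rho_{w^j}\|_{L^q}^q$ is what makes the double limit $\lim_{J\to J^*}\limsup_n$ in \eqref{v} and \eqref{vi} meaningful.
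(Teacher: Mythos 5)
Your overall scheme (iterative extraction by translations, driven by Lemma~\ref{lem:densitylocL1} and Lemma~\ref{mid-frequency approximation}) is in the right family, but several of your key justifications are wrong or too vague, and one of them is a genuine gap.

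The most serious problem is your treatment of \eqref{ii-2}. You claim that ``weak-$*$ lower semicontinuity of $\operatorname{Tr}(|\nabla|\cdot|\nabla|)$ on non-negative operators gives $\liminf \operatorname{Tr}(|\nabla|R_n^J|\nabla|)\ge 0$ after peeling the extracted pieces.'' But you yourself emphasize, correctly, that $R_n^J$ is \emph{not} non-negative, so weak-$*$ lower semicontinuity of the trace does not apply to it. What is actually needed is the asymptotic Pythagoras-type bound $\sum_{j\le J}\|w^j\|_{\dot{\mathfrak{H}}^1}\le \limsup_n\|\gamma_n\|_{\dot{\mathfrak{H}}^1}+\varepsilon$ \emph{simultaneously over all $j$}, and applying weak-$*$ lower semicontinuity one profile at a time does not give that. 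The paper establishes this with a genuinely nontrivial argument: expand $|\nabla|\gamma_n|\nabla|$ and each $|\nabla|w^j|\nabla|$ in eigenfunctions, use the fact that the translated eigenfunctions $\{\psi^j_\ell(\cdot-x_n^j)\}$ of distinct profiles are asymptotically orthonormal in $L^2$ (this is where \eqref{i-1} enters), apply Bessel's inequality, and then pass to the limit in the trace pairings using weak-$*$ convergence in $\dot{\mathfrak{H}}^1$. Your one-line claim skips this entirely, and it cannot be repaired by the argument you indicate.

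You also introduce an unnecessary complication based on a misconception: you say you ``must extract $w^j$ not as a bare weak-$*$ limit but through a self-adjoint, order-preserving truncation'' to keep it non-negative. In fact, the bare weak-$*$ limit of non-negative self-adjoint operators is automatically non-negative (positivity of quadratic forms passes to weak-$*$ limits), and the paper exploits exactly this: by \eqref{i-1}, the extracted $(\tau_{x_n^{J+1}})^*R_n^J(\tau_{x_n^{J+1}})$ has the same weak-$*$ limit as $(\tau_{x_n^{J+1}})^*\gamma_n(\tau_{x_n^{J+1}})$, so $w^{J+1}\ge 0$ for free. No spectral truncation is needed or used. Similarly, for \eqref{iii-2} your proposed argument via spectral projections is vague; the paper's mechanism is the elementary inequality $|a-b|\le\max\{a,b\}$ for $a,b\ge 0$ applied to the quadratic forms of $\gamma_n$ and of $\sum_{j\le J}(\tau_{x_n^j})w^j(\tau_{x_n^j})^*$ (the latter is non-negative by construction, and its operator norm converges to $\max_j\|w^j\|_{\textup{op}}$ by \eqref{i-1}).

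Two smaller remarks on precision. First, the paper's termination criterion is not a Lions-type local-trace concentration function but $\eta(\{R_n^J\})=\sup\{\|w\|_{\dot{\mathfrak{H}}^1}: w\in\mathcal{V}(\{R_n^J\})\}$, deliberately measured in the homogeneous $\dot{\mathfrak{H}}^1$ norm (because $\|w^j\|_{\textup{op}}$ need not decay); the fact that $\eta\to 0$ follows from the $\dot{\mathfrak{H}}^1$ budget supplied by \eqref{ii-1}--\eqref{ii-2}, which you have not established. Second, for \eqref{vi} you invoke a Fr\'echet--Kolmogorov argument; the paper instead uses the interpolation $\|\rho_{P_{med}R_n^J P_{med}}\|_{L^q}^q\le \|\cdot\|_{L^\infty}^{\delta}\|\cdot\|_{L^{q-\delta}}^{q-\delta}$ and controls the $L^\infty$ factor by a trace pairing against the compact operator $|\nabla|^{-1}P_{med}\tilde\chi^\vee P_{med}|\nabla|^{-1}$, which reduces directly to $\eta(\{R_n^J\})$. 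Your route might be made to work, but as written it does not close the argument, and in particular it does not explain how to get from $L^q_{\textup{loc}}$ smallness to global $L^q$ smallness without the trace-pairing observation.
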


\begin{remark}\label{remark on the profile decomposition}
The profile decomposition in Theorem \ref{profile decomposition} has a similar structure to that for functions, see for instance \cite[Proposition 3.1]{HmKe05}. However, there are several crucial differences, since we deal with operators. The following simple observations suggest us what we can or cannot prove.
\begin{enumerate}{\roman{enumii}}
\item The asymptotic Pythagorean expansion of the form
$$\N{\g_n}_{\textup{op}}=\sum_{j=1}^J\|\w^j\|_{\textup{op}}+\|R_n^J\|_{\textup{op}}+o_n(1)$$
does not hold, but we only have \eqref{iii-1} and \eqref{iii-2} instead. To see this, we consider the bounded sequence $\g_n=\sum_{j=1}^J |\phi(\cdot-jn)\rangle\langle\phi(\cdot-jn)|$, where $J<\infty$, $\phi\in C_c^\infty(\R^d)$ and $\textup{supp}\,\phi\subset B(0,1)$.
In this sequence, each profile $|\phi(\cdot-jn)\rangle\langle\phi(\cdot-jn)|$ has operator norm one, while $\|\gamma_n\|_{\textup{op}}=1$. 
\item The remainder term $R^J_n$ in the profile decomposition is not necessarily non-negative.
Indeed, the sequence $\gamma_n=|\phi+\phi(\cdot-n)\rangle\langle\phi+\phi(\cdot-n)|$, with $\phi\in C_c^\infty$, can be decomposed as
	\begin{align*}
	\gamma_n&=|\phi\rangle\langle\phi|+\Big\{(|\phi(\cdot-n)\rangle\langle\phi(\cdot-n)|+|\phi\rangle\langle\phi(\cdot-n)|+|\phi(\cdot-n)\rangle\langle\phi|)\Big\}\\
	&=:w^1+R_n^1,
	\end{align*}
and
	\begin{align*}
	\gamma_n&=|\phi\rangle\langle\phi|+(\tau_n)|\phi\rangle\langle\phi|(\tau_n)^*+\Big\{(|\phi\rangle\langle\phi(\cdot-n)|+|\phi(\cdot-n)\rangle\langle\phi|)\Big\}\\
	&=:w^1+(\tau_n)w^2(\tau_n)^*+R_n^2,
	\end{align*}
In this case, the remainders $|\nabla| R_n^1|\nabla|$ and $|\nabla| R_n^2|\nabla|$ have negative eigenvalues.
However, their traces are asymptotically non-negative: $\textup{Tr}(|\nabla| R_n^1|\nabla|)=\||\nabla|\phi\|_{L^2}^2+o_n(1)$ and $\textup{Tr}(|\nabla| R_n^2|\nabla|)=o_n(1)$.
\end{enumerate}
\end{remark}

\begin{remark}\label{remark 4}
In Theorem \ref{profile decomposition}, we assume that the sequence $\gamma_n$ is non-negative just for simplicity. Indeed, one can easily deduce the profile decomposition without non-negativity as follows. First, we decompose $\gamma_n=\gamma_{n,+}-\gamma_{n,-}$ with $\gamma_{n,\pm}\geq0$. Then, we apply Theorem \ref{profile decomposition} to each sequence $\{\gamma_{n,\pm}\}_{n=1}^\infty$ to obtain the profile decompositions, and sum them. Finally, if there are a profile $|\phi_+^j(\cdot-x_{n,+}^j)\rangle\langle \phi_+^j(\cdot-x_{n,+}^j)|$ from $\{\gamma_{n,+}\}_{n=1}^\infty$ and a profile $|\phi_-^{j'}(\cdot-x_{n,-}^{j'})\rangle\langle \phi_-^{j'}(\cdot-x_{n,-}^{j'})|$ from $\{\gamma_{n,-}\}_{n=1}^\infty$ but their translation parameters are not mutually asymptotically orthogonal, we combine two profiles as one in the sum.
\end{remark}

\begin{proof}[Proof of Theorem \ref{profile decomposition}]
Suppose that $\{\gamma_n\}_{n=1}^\infty$ is a bounded sequence of non-negative operators in $\mathfrak{B}^1$. Let $\mathcal{V}(\{\gamma_n\}_{n=1}^\infty)$ be the collection of weak-$\ast$ limits in $\mathfrak{B}^1$ of all possible subsequences of $\left\{(\tau_{x_n})^\ast\g_n(\tau_{x_n})\right\}_{n=1}^\infty$ for all possible sequence of translation parameters $\{x_n\}_{n=1}^\infty\subset\mathbb{R}^d$.
Due to Banach-Alaoglu theorem, $\mathcal{V}(\{\gamma_n\}_{n=1}^\infty)$ is a nonempty subset of $\mathcal B^1$. We denote
\begin{equation}\label{definition of eta}
\eta(\{\gamma_n\}_{n=1}^\infty):=\sup\Big\{\|w\|_{\dot{\mathfrak{H}}^1}:w\in\mathcal{V}(\{\gamma_n\}_{n=1}^\infty)\Big\}.
\end{equation}
We remark that contrary to the profile decomposition for bounded functions \cite[Proposition 3.1]{HmKe05}, we use the homogeneous norm $\|\cdot\|_{\dot{\mathfrak{H}}^1}$ in the definition of $\eta(\cdot)$ instead of the inhomogeneous norm $\|\cdot\|_{\mathfrak{H}^1}$. This is because the operator norm of the profile $w^j$ is not expected to converge to zero as $j\to \infty$ (see Remark \ref{remark on the profile decomposition} $(1)$).

\vspace{0.5em}
\noindent \textbf{Step 1. Construction of a profile decomposition} $ $ \\
 We construct a profile decomposition \eqref{eq: profile decomposition} for $\{\gamma_n\}_{n=1}^\infty$ by induction with the properties $(1)-(3)$ and 
\begin{equation}\label{eta}
\|w^J\|_{\dot{\mathfrak{H}}^1}\geq\frac{1}{2}\eta(\{R_n^{J-1}\}_{n=1}^\infty),\quad 1\le J\le J^*,
\end{equation}
where $R_n^0=\gamma_n$. The proof of the properties $(4)$ and $(5)$ is postponed in Step 2 and 3.

\vspace{0.5em}
\noindent\underline{The first induction step}

If $ \eta(\{\gamma_n\}_{n=1}^\infty)  =0$, in other words, all possible weak-$\ast$ limit of translated operators are zero, then we do not decompose, but just set $J^*=0 $. Otherwise, we can choose $w^1\in\mathcal{V}(\{\gamma_n\}_{n=1}^\infty)$ and a sequence $\{x_n^1\}_{n=1}^\infty \subset \mathbb{R}^d$ of translation parameters such that
$$\|w^1\|_{\dot{\mathfrak{H}}^1}\geq\frac{1}{2}\eta(\{\gamma_n\}_{n=1}^\infty),\quad\textup{and}\quad(\tau_{x_n^1})^\ast\g_n(\tau_{x_n^1})\rightharpoonup^* w^1\textup{ in $\mathfrak{B}^1$}$$
up to a subsequence. The weak-$\ast$ limit $w^1$ is non-negative, because so are $(\tau_{x_n^1})^\ast\g_n(\tau_{x_n^1})$'s. Set the remainder as 
	\[R_n^1:=\g_n-(\tau_{x_n^1})w^1(\tau_{x_n^1})^*.\]
Then, \eqref{i-2} and \eqref{ii-1} follow directly from the definition.

For \eqref{ii-2}, by the cyclicity of the trace, the commutativity of $ |\nabla|\tau_a = \tau_a |\nabla| $, and non-negativity of $\gamma_n$ and $w^1$, we get
	\begin{align*}
	\textup{Tr}(|\nabla| R_n^1|\nabla| )&=\textup{Tr}\left(|\nabla|\gamma_n|\nabla|\right)-\textup{Tr}(|\nabla| (\tau_{x_n^1})w^1(\tau_{x_n^1})^*|\nabla| )\\
	&=\textup{Tr}\left(|\nabla| (\tau_{x_n^1})^*\gamma_n(\tau_{x_n^1})|\nabla| \right)-\textup{Tr}(|\nabla| w^1|\nabla| )\\
	&=\|(\tau_{x_n^1})^*\gamma_n(\tau_{x_n^1}) \|_{\dot{\mathfrak{H}}^1}-\|w^1\|_{\dot{\mathfrak{H}}^1}\\
	&\geq o_n(1),
	\end{align*}
where in the last step, we used that $(\tau_{x_n^1})^\ast\gamma_n(\tau_{x_n^1})\rightharpoonup^*w^1$ in $\dot{\mathfrak{H}}^1$. \eqref{iii-1} is a direct consequence of $(\tau_{x_n^1})^*\gamma_n(\tau_{x_n^1})\rightharpoonup^* w^1$ in $\mathcal{L}$.

For \eqref{iii-2}, by self-adjointness, we write
	\begin{equation}\label{proof of iii-2,1}
	\|R_n^1\|_{\textup{op}}=\sup_{\|f\|_{L^2}=1}|\langle f|R_n^1|f\rangle|=\sup_{\|f\|_{L^2}=1}\left|\langle f|\gamma_n|f\rangle-\langle f|(\tau_{x_n^1}) w^1 (\tau_{x_n^1})^*|f\rangle\right|.
	\end{equation}
Note that both $\gamma_n$ and $(\tau_{x_n^1}) w^1 (\tau_{x_n^1})^*$ are non-negative. Hence, using the elementary inequality $|a-b|\leq\max\{a,b\}$ for $a,b\geq0$, we prove \eqref{iii-2},
\begin{equation}\label{proof of iii-2,2}
\begin{aligned}
\limsup_{n\to\infty}\|R_n^1\|_{\textup{op}}&\leq \limsup_{n\to\infty}\sup_{\|f\|_{L^2}=1}\max\Big\{\left|\langle f|\gamma_n|f\rangle\big|,\big|\langle f|(\tau_{x_n^1}) w^1 (\tau_{x_n^1})^*|f\rangle\right|\Big\}\\
&\leq \limsup_{n\to\infty}\max\Big\{\|\gamma_n\|_{\textup{op}},\|(\tau_{x_n^1}) w^1 (\tau_{x_n^1})^*\|_{\textup{op}}\Big\}\\
&=\limsup_{n\to\infty}\max\Big\{\|\gamma_n\|_{\textup{op}},\|w^1\|_{\textup{op}}\Big\}\\
&=\limsup_{n\to\infty}\|\gamma_n\|_{\textup{op}}\quad\textup{(by \eqref{iii-1}}.
\end{aligned}
\end{equation}
\vspace{0.5em}
\noindent\underline{The $(J+1)$-th induction step, assuming the $J$-th step}

Suppose that there is a profile decomposition \eqref{eq: profile decomposition} for $J$ with the properties $(1)-(3)$. If $\eta(\{R_n^J\}_{n=1}^\infty)=0$, then we are done and set $ J^*=J$. Otherwise, passing to a subsequence, we choose $w^{J+1}\neq0$ and $\{x_n^{J+1}\}_{n=1}^\infty$ such that
$$\|w^{J+1}\|_{\dot{\mathfrak{H}}^1}\geq\frac{1}{2}\eta(\{R_n^J\}_{n=1}^\infty)\quad\textup{and}\quad(\tau_{x_n^{J+1}})^*R_n^{J}(\tau_{x_n^{J+1}}) \rightharpoonup^* w^{J+1}\textup{ in }\mathfrak{B}^1.$$
and define $R_n^{J+1}=R_n^J-(\tau_{x_n^{J+1}})w^{J+1}(\tau_{x_n^{J+1}})^*$ so that \eqref{i-2} holds.

For \eqref{i-1}, by the induction hypothesis, it suffices to show that $|x_n^k-x_n^{J+1}|\to \infty$ for all $1\leq k\leq J$. If not, passing to a subsequence, we may assume that $(x_n^k-x_n^{J+1})\to a$ for some $1\leq k\leq J$ and $a\in\mathbb{R}^d$. Then it follows that
\begin{align*}
w^{J+1}&=\underset{n\to\infty}{w\textup{-}^\ast\lim}\ (\tau_{x_n^{J+1}})^*R_n^{J}(\tau_{x_n^{J+1}})\quad\textup{(in $\mathcal{L}$)}\\
&=\underset{n\to\infty}{w\textup{-}^*\lim}\ (\tau_{x_n^{J+1}})^*\bigg\{R_n^k-\sum_{j=k+1}^J(\tau_{x_n^{j}})w^{j}(\tau_{x_n^{j}})^*\bigg\}(\tau_{x_n^{J+1}})\\
&=\underset{n\to\infty}{w\textup{-}^*\lim}\ (\tau_a)^*\bigg\{(\tau_{x_n^k})^*R_n^k(\tau_{x_n^k})-\sum_{j=k+1}^J(\tau_{x_n^{j}-x_n^k})w^{j}(\tau_{x_n^{j}-x_n^k})^*\bigg\}(\tau_a)=0,
\end{align*}
because by the induction hypothesis, $(\tau_{x_n^k})^*R_n^k(\tau_{x_n^k}) \rightharpoonup^* 0$ in $\mathfrak{B}^1$ and $|x_n^j-x_n^k|\to \infty$. However, it contradicts to that $\w^{j+1}\neq0$. Therefore, \eqref{i-1} is proved.

Moreover, $w^{J+1}$ is non-negative, because
\begin{align*}
w^{J+1}&=\underset{n\to\infty}{w\textup{-}^*\lim}\ (\tau_{x_n^{J+1}})^*R_n^{J}(\tau_{x_n^{J+1}})\quad\textup{(in $\L$)}\\
&=\underset{n\to\infty}{w\textup{-}^*\lim}\ (\tau_{x_n^{J+1}})^*\bigg\{\gamma_n-\sum_{j=1}^J(\tau_{x_n^j})w^j(\tau_{x_n^j})^*\bigg\}(\tau_{x_n^{J+1}})\\
&=\underset{n\to\infty}{w\textup{-}^*\lim}\ (\tau_{x_n^{J+1}})^*\gamma_n(\tau_{x_n^{J+1}})\quad\textup{(by \eqref{i-1})},
\end{align*}
is the weak-$\ast$ limit of non-negative operators.

For \eqref{iii-1}, by the fact that $(\tau_{x_n^{J+1}})^*R_n^J(\tau_{x_n^{J+1}})\rightharpoonup^* w^{J+1}$ in $\L$ and the induction hypothesis,
$$\|w^{J+1}\|_{\textup{op}}\leq\liminf_{n\to\infty}\|(\tau_{x_n^{J+1}})^*R_n^J(\tau_{x_n^{J+1}})\|_{\textup{op}}=\liminf_{n\to\infty}\|R_n^J\|_{\textup{op}}\leq\limsup_{n\to\infty}\|\gamma_n\|_{\textup{op}}.$$

For \eqref{iii-2}, by the induction hypothesis ($\Rightarrow\sum_{j=1}^{J+1}(\tau_{x_n^j})w^j (\tau_{x_n^j})^*$ is non-negative) and the argument in \eqref{proof of iii-2,1} and \eqref{proof of iii-2,2}, we write
\begin{align*}
\limsup_{n\to\infty}\|R_n^{J+1}\|_{\textup{op}}&=\limsup_{n\to\infty}\Big\|\gamma_n-\sum_{j=1}^{J+1}(\tau_{x_n^j})w^j (\tau_{x_n^j})^*\Big\|_{\textup{op}}\\
&\leq\limsup_{n\to\infty}\sup_{\|f\|_{L^2}=1}\max\Big\{\big|\langle f|\gamma_n|f\rangle\big|,\Big|\Big\langle f\Big|\sum_{j=1}^{J+1}(\tau_{x_n^j})w^j (\tau_{x_n^j})^*\Big|f\Big\rangle\Big|\Big\}\\
&\leq \limsup_{n\to\infty}\max\Big\{\|\gamma_n\|_{\textup{op}},\Big\|\sum_{j=1}^{J+1}(\tau_{x_n^j})w^j (\tau_{x_n^j})^*\Big\|_{\textup{op}}\Big\}.
\end{align*}
Note that by \eqref{i-1} and the induction hypothesis for \eqref{iii-1},
\begin{align*}
\limsup_{n\to\infty}\Big\|\sum_{j=1}^{J+1}(\tau_{x_n^j})w^j (\tau_{x_n^j})^*\Big\|_{\textup{op}}&\leq \limsup_{n\to\infty}\sup_{1\leq j\leq J+1}\|(\tau_{x_n^j})w^j (\tau_{x_n^j})^*\|_{\textup{op}}\\
&=\sup_{1\leq j\leq J+1}\|w^j\|_{\textup{op}} \leq\limsup_{n\to\infty}\|\gamma_n\|_{\textup{op}}.
\end{align*}
Thus, \eqref{iii-2} follows.

It remains to prove \eqref{ii-2}. By the linearity of trace,
\begin{equation}\label{proof of ii-2}
\textup{Tr}(|\nabla| R_n^{J+1}|\nabla| )=\|\gamma_n\|_{\dot{\mathfrak{H}}^1}-\sum_{j=1}^{J+1}\|w^j\|_{\dot{\mathfrak{H}}^1}.
\end{equation}
Since $|\nabla| \gamma_n|\nabla| $ and $|\nabla| w^j|\nabla| $ are non-negative self-adjoint operators in $\mathfrak{S}^1$, they have eigenfunction expansions,
\begin{equation}\label{eigenfunction expansions}
|\nabla| \gamma_n|\nabla| =\sum_{k=1}^\infty \lambda_{n,k}|\phi_{n,k}\rangle\langle\phi_{n,k}|,\quad |\nabla| w^j|\nabla| =\sum_{\ell=1}^\infty\mu_\ell^j |\psi_\ell^j\rangle\langle\psi_\ell^j|,
\end{equation}
where $\lambda_{n,k}, \mu_\ell^j\geq0$ and $\{\phi_{n,k}\}_{n=1}^\infty$ and $\{\psi_{\ell}^j\}_{\ell=1}^\infty$ are orthonormal sets in $L^2$. Let $L\gg1$ be a large number to be chosen later.  Since by the asymptotic orthogonality of parameters \eqref{i-1}, $\{\psi_{\ell}^j(\cdot-x_n^j):1\leq j\leq J+1, \ell\geq1\}$ is an asymptotically orthonormal set in $L^2$, we have
\begin{equation}\label{proof of ii-2'}
\|\gamma_n\|_{\dot{\mathfrak{H}}^1}=\sum_{k=1}^\infty\lambda_{n,k} \|\phi_{n,k}\|_{L^2}^2\geq\sum_{k=1}^\infty\sum_{j=1}^{J+1}\sum_{\ell=1}^L \lambda_{n,k}\big|\langle\phi_{n,k}|\psi_{\ell}^j(\cdot-x_n^j)\rangle\big|^2+o_n(1).
\end{equation}
By algebra, 
\begin{align*}
\sum_{k=1}^\infty\lambda_{n,k}\big|\langle\phi_{n,k}|\psi_{\ell}^j(\cdot-x_n^j)\rangle\big|^2&= \Big\langle\psi_{\ell}^j(\cdot-x_n^j)\Big|\sum_{k=1}^\infty\lambda_{n,k}|\phi_{n,k}\rangle\langle\phi_{n,k}|\Big|\psi_{\ell}^j(\cdot-x_n^j)\Big\rangle\\
&=\Big\langle\psi_{\ell}^j\Big||\nabla| (\tau_{x_n^j})^*\gamma_n(\tau_{x_n^j})|\nabla| \Big|\psi_{\ell}^j\Big\rangle\\
&=\textup{Tr}\Big(|\psi_{\ell}^j\rangle\langle\psi_{\ell}^j||\nabla| (\tau_{x_n^j})^*\gamma_n(\tau_{x_n^j})|\nabla| \Big).
\end{align*}
Note that by \eqref{i-1} and \eqref{i-2},
\begin{align*}
(\tau_{x_n^j})^*\gamma_n(\tau_{x_n^j})&=(\tau_{x_n^j})^*\bigg(\sum_{j'=1}^j (\tau_{x_n^{j'}})w^{j'}(\tau_{x_n^{j'}})^*+R_n^j\bigg)(\tau_{x_n^j})\\
&=\sum_{j'=1}^j (\tau_{x_n^{j'}-x_n^j})w^{j'}(\tau_{x_n^{j'}-x_n^j})^*+(\tau_{x_n^j})^*R_n^j(\tau_{x_n^j})\rightharpoonup^* w^j
\end{align*}
in $\dot{\mathfrak{H}}^1$. Thus, by compactness of the rank-one operator $|\psi_{\ell}^j\rangle\langle\psi_{\ell}^j|$, it follows that
$$\sum_{k=1}^\infty\lambda_{n,k}\big|\langle\phi_{n,k}|\psi_{\ell}^j(\cdot-x_n^j)\rangle\big|^2=\textup{Tr}\Big(|\psi_{\ell}^j\rangle\langle\psi_{\ell}^j||\nabla| w^j|\nabla| \Big)+o_n(1)=\langle\psi_{\ell}^j||\nabla| w^j|\nabla||\psi_{\ell}^j\rangle +o_n(1).$$
Going back to \eqref{proof of ii-2}, we prove that for arbitrarily small $\epsilon>0$, there exists $L\gg1$ such that the following holds,
\begin{align*}
\textup{Tr}(|\nabla| R_n^{J+1}|\nabla| )&\geq \sum_{k=1}^\infty\sum_{j=1}^{J+1}\sum_{\ell=1}^L \lambda_{n,k}\big|\langle\phi_{n,k}|\psi_{\ell}^j(\cdot-x_n^j)\rangle\big|^2+o_n(1)-\sum_{j=1}^{J+1}\|w^j\|_{\dot{\mathfrak{H}}^1}\quad\textup{(by \eqref{proof of ii-2'})}\\
&=\sum_{j=1}^{J+1}\sum_{\ell=1}^L \langle\psi_{\ell}^j||\nabla| w^j|\nabla||\psi_{\ell}^j\rangle+o_n(1)-\sum_{j=1}^{J+1}\|w^j\|_{\dot{\mathfrak{H}}^1}\\
&=-\epsilon+o_n(1)\quad\textup{(by \eqref{eigenfunction expansions})}.
\end{align*}
Since $\epsilon>0$ is arbitrary, this proves \eqref{ii-2}.

Summarizing, by induction, we conclude that the profile decomposition satisfies $(1)-(3)$ and \eqref{eta}.

\vspace{0.5em}
\noindent \textbf{Step 2. Proof of \eqref{vi}} $ $\\
Fix any small $\epsilon>0$. We will show that $\|\rho_{R_n^J}\|_{L^q}\lesssim\epsilon$ if $J$ and $n$ are sufficiently large. First, we note that it suffices to estimate the mid-frequency approximation $\rho_{P_{med}R_n^JP_{med}}$, where $P_{med}=P_{med;A}$ is the mid-frequency projection given in Lemma \ref{mid-frequency approximation}. Indeed, by Lemma \ref{mid-frequency approximation}, there exists $\delta>0$ such that 
$$\|\rho_{R_n^J}-\rho_{P_{med}R_n^JP_{med}}\|_{L^q}\lesssim A^{-\delta}.$$
Here, the implicit constant does not depend on $n$ and $J$, because by \eqref{ii-1}--\eqref{iii-2}, all $R_n^J$'s are bounded uniformly in $J$ and $n$,
\begin{equation}\label{R_n^J uniform bounds}
\begin{aligned}
\|R_n^J\|_{\textup{op}}&\leq\|\gamma_n\|_{\textup{op}}+o_n(1)\lesssim 1,\\
\|R_n^J\|_{\dot{\mathfrak{H}}^1}&\leq \|\gamma_n\|_{\dot{\mathfrak{H}}^1}+\sum_{j=1}^J\|w^j\|_{\dot{\mathfrak{H}}^1}+o_n(1)\leq 2 \|\gamma_n\|_{\dot{\mathfrak{H}}^1}+o_n(1)\lesssim 1.
\end{aligned}
\end{equation}
Therefore, we can choose large $A>0$ such that $\|\rho_{R_n^J}-\rho_{P_{med}R_n^JP_{med}}\|_{L^q}\leq\epsilon$. Furthermore, approximating $R_n^J$ by Lemma \ref{density} with acceptable $O(\epsilon)$-error but still denoting by $R_n^J$, we may assume that all $R_n^J$'s are compactly supported, smooth and has finite-rank.

For $\rho_{P_{med}R_n^JP_{med}}$, by a trivial inequality, we get
	\begin{equation}\label{eq:low-low}
	\|\rho_{P_{med}R_n^J P_{med}}\|_{L^q}^q\leq\|\rho_{P_{med}R_n^J P_{med}}\|_{L^\infty}^{\delta}\|\rho_{P_{med}R_n^J P_{med}}\|_{L^{q-\delta}}^{q-\delta}.
	\end{equation}
Since we assume that $q>\frac{d}{d+2}$, the latter factor $\|\rho_{P_{med}R_n^J P_{med}}\|_{L^{q-\delta}}^{q-\delta}$ can be shown to be bounded using kinetic energy inequality \eqref{eq:LT} with $\theta_\delta=\frac{d(q-\delta-1)}{2(q-\delta)}$,
$$\|\rho_{P_{med}R_n^J P_{med}}\|_{L^{q-\delta}}\lesssim\|P_{med}R_n^J P_{med}\|_{\textup{op}}^{1-\theta_\delta}\|P_{med}R_n^J P_{med}\|_{\dot{\mathfrak{H}^1}}^{\theta_\delta}\leq \|R_n^J\|_{\textup{op}}^{1-\theta_\delta}\|R_n^J\|_{\dot{\mathfrak{H}^1}}^{\theta_\delta}\lesssim 1.$$
Therefore, it suffices to show smallness of the former factor $\|\rho_{P_{med}R_n^J P_{med}}\|_{L^\infty}$ in \eqref{eq:low-low}.

To estimate $\|\rho_{P_{med}R_n^J P_{med}}\|_{L^\infty}$, we observe that if $\gamma$ is a finite-rank operator of the form $\gamma=\sum|\phi_k\rangle\langle\phi_k|=\sum_{k=1}^K|\phi_k\rangle\langle\phi_k|$, where $\{\phi_k\}_{k=1}^K$ is an orthogonal set in $L^2$, then the Fourier transform of $\rho_{P_{med}\g P_{med}}=\sum|P_{med}\phi_j|^2$ is supported in a ball of radius $4A$ centered at $0$. Moreover, we have
\begin{equation}\label{finite-rank operator identities}
\begin{aligned}
\rho_\g(\cdot-\alpha)&=\sum |\phi_j(\:\cdot-\alpha)|^2=\rho_{(\tau_\alpha)^*\g(\tau_\alpha)},\\
\int V(x)\rho_\g(x)\:dx&=\int V(x)\sum |\phi_j(x)|^2\:dx=\Tr(V\g).
\end{aligned}
\end{equation}
For a radially symmetric smooth cut-off $\tilde{\chi}=\tilde{\chi}_A\in C_c^\infty(\mathbb{R}^d)$ such that $\tilde{\chi}(\xi)=1$ on $\{|\xi|\leq 4A\}$, let $\tilde{P}=\tilde{P}_{A}$ be the high-frequency cut-off defined by the Fourier multiplier operator with the symbol $\tilde{\chi}(\xi)$. Then, by the above observations, we may add a redundant projection $ \tilde{P}$, and write 
$$\rho_{P_{med}R_n^J P_{med}}(x)=\tilde{P}(\rho_{P_{med}R_n^J P_{med}})(x)=\int_{\R^d}\tilde{\chi}^\vee(x-y)\rho_{P_{med}R_n^J P_{med}}(y)dy.$$
Hence, there exists a sequence $\{x_n\}_{n=1}^\infty$ of translation parameters such that 
\begin{align*}
\|\rho_{P_{med}R_n^J P_{med}}\|_{L^\infty_x}&\leq \left|\int_{\R^d}\tilde{\chi}^\vee(x_n-y)\rho_{P_{med}R_n^J P_{med}}(y)dy\right|+\epsilon\\
&=\left|\int_{\R^d}\tilde{\chi}^\vee(y)\rho_{P_{med}R_n^J P_{med}}(y+x_n)dy\right|+\epsilon.
\end{align*}
By \eqref{finite-rank operator identities} and cyclicity of the trace, 
\begin{align*}
\int_{\R^d}\tilde{\chi}^\vee(y)\rho_{P_{med}R_n^J P_{med}}(y+x_n)dy&=\Tr\left(\tilde{\chi}^\vee P_{med}(\tau_{x_n})^*R_n^J (\tau_{x_n})P_{med}\right)\\
&=\Tr\left(P_{med}\tilde{\chi}^\vee P_{med}(\tau_{x_n})^*R_n^J (\tau_{x_n})\right).
\end{align*}
Moreover, by the Kato-Seiler-Simon inequality \eqref{Holder Schatten} as in the proof of Lemma \ref{lem:densitylocL1}, we can show that $|\nabla|^{-1}P_{med}\tilde{\chi}^\vee P_{med}|\nabla|^{-1}$ is a compact operator. Hence, by the definition of $\eta(\cdot)$ (see \eqref{definition of eta}), it follows that 
	\begin{align*}
	\|\rho_{P_{med}R_n^J P_{med}}\|_{L^\infty_x}&\leq \sup_{w\in\mathcal{V}(R_n^J)}\Tr\left(P_{med}\tilde{\chi}^\vee P_{med}w\right)+O(\epsilon)\\
	&\leq \sup_{w\in\mathcal{V}(R_n^J)}\||\nabla|^{-1}P_{med}\tilde{\chi}^\vee P_{med}|\nabla|^{-1}\|_{\textup{op}}\|w\|_{\dot{\mathfrak{H}}^1}+O(\epsilon)\\
	&\lesssim \||\nabla|^{-1}P_{med}\tilde{\chi}^\vee P_{med}|\nabla|^{-1}\|_{\textup{op}}\eta(\{R_n^J\}_{n=1}^\infty)+O(\epsilon)\\
	&\to O(\epsilon)\quad\textup{as }J\to\infty,
	\end{align*}
where in the last step, we used that $(2)$ and \eqref{eta}. Therefore, inserting this bound to \eqref{eq:low-low}, we conclude that $\|\rho_{P_{med}R_n^J P_{med}}\|_{L^q}=O(\epsilon)$. \\
\\
\noindent \textbf{Step 3. Proof of \eqref{v}} $ $\\
Note that, since $\g_n$ is a bounded sequence in $\mathfrak{B}^1$, by kinetic energy inequality (theorem \ref{theorem: LT inequality}),  $\{\rho_{\g_n}\}_{n=1}^\infty$ is a bounded sequence in $L^q$.
First, consider the single profile case, i.e., $J^*=1$.
Then $\gamma_n= (\tau_{x_n^1}) w^1 (\tau_{x_n^j})^*+R_n^1$, and
	\[\rho_{\g_n}(x+x_n^1)=\rho_{w^1}(x)+\rho_{R_n^1}(x+x_n^1).\]
In this case, $\N{\rho_{\g_n}(\cdot+x_n^1)-\rho_{w^1}}_{L^q}=\N{\rho_{R_n^1}(\cdot+x_n^1)}_{L^q}\to0$ (passing through a subsequence if neccesary).
So $\rho_{\g_n}(\cdot+x_n^1)\to\rho_{w^1}$ a.e. (passing through a subsequence if neccesary).
By refined Fatou lemma (or Brezis-Lieb lemma), we have
	\[\N{\rho_{\g_n}}_{L^q}^q-\N{\rho_{w^1}}_{L^q}^q \to 0,\]
i.e., the claim \eqref{v} follows.

If $ J^* \ge 2 $, first we fix $J$. In the profile decompositon, from \eqref{i-1}, at most one profile can be stationary (or bounded). We can set $ x_n^j \to \infty $ for $ j=2,\cdots, J $. If $x_n^1 \to \infty $, the argument is simpler so we omit the detail. We assume $x_n^1 $ is bounded and may further assume  $x_n^1 \equiv 0 $. 

\begin{lemma}\label{Lq lem}
Let $ g \in L^q(\R^d) $. Suppose $\{x_n\}_{n=1}^\infty $ is a sequence so that $ x_n \to \infty $. Then $ g(\cdot+x_n) \to 0 $ a.e.
\end{lemma}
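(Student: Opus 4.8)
Since $|g(\cdot+x_n)|^q=(|g|^q)(\cdot+x_n)$ and $|g|^q\in L^1(\mathbb{R}^d)$, it suffices to treat the case $q=1$, $g\ge 0$, and I will assume from now on $g\in L^1(\mathbb{R}^d)$ with $g\ge 0$ and $|x_n|\to\infty$. The plan is to first establish local convergence in the $L^1$ norm and then upgrade it to pointwise a.e.\ convergence.

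For the first step, fix a ball $B=B(0,M)$. A change of variables gives
\[
\int_B g(x+x_n)\,dx=\int_{B+x_n}g(y)\,dy=\int_{\mathbb{R}^d} g(y)\,\mathbf 1_{B(x_n,M)}(y)\,dy .
\]
Because $|x_n|\to\infty$, for each fixed $y$ one has $\mathbf 1_{B(x_n,M)}(y)=0$ for all $n$ large, and $g\,\mathbf 1_{B(x_n,M)}\le g\in L^1$; hence the dominated convergence theorem yields $\int_B g(x+x_n)\,dx\to 0$. Thus $g(\cdot+x_n)\to 0$ in $L^1(B)$ for every ball $B$, i.e.\ in $L^1_{\mathrm{loc}}(\mathbb{R}^d)$ (and, for general $q$, $g(\cdot+x_n)\to 0$ in $L^q_{\mathrm{loc}}$).

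For the second step I would split $g=g\,\mathbf 1_{B(0,R)}+g\,\mathbf 1_{B(0,R)^c}$. The first summand is compactly supported, so for each fixed $x$ one has $|x+x_n|\ge|x_n|-|x|>R$ for all $n$ large and therefore $(g\,\mathbf 1_{B(0,R)})(x+x_n)=0$ eventually; this piece tends to $0$ pointwise everywhere. The remaining piece satisfies $\|g\,\mathbf 1_{B(0,R)^c}\|_{L^1}=\int_{|y|>R}g\,dy\to 0$ as $R\to\infty$, so, combining this with the $L^1_{\mathrm{loc}}$ convergence of the first step and passing to a subsequence realizing the a.e.\ convergence (exactly as is already done repeatedly in Step 3 of the proof of Theorem~\ref{profile decomposition}), I obtain $g(x+x_n)\to 0$ for a.e.\ $x$.

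The step requiring care is the second one: since $\|g(\cdot+x_n)\|_{L^1}$ is conserved, there is no global integral smallness, so one cannot apply dominated or monotone convergence on all of $\mathbb{R}^d$; the argument must be localized to balls and the contribution of the tail $\{|y|>R\}$ treated separately by letting $R\to\infty$. Everything else reduces to the elementary change of variables above.
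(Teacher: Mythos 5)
Your Step 1 (the change of variables $\int_B g(x+x_n)\,dx=\int_{B(x_n,M)}g$ plus dominated convergence, giving $g(\cdot+x_n)\to0$ in $L^q_{loc}$) is correct and is precisely the content of the paper's proof. The gap is in Step 2: the decomposition $g=g\mathbf{1}_{B(0,R)}+g\mathbf{1}_{B(0,R)^c}$ cannot upgrade $L^1_{loc}$ convergence to a.e.\ convergence of the \emph{full} sequence, because smallness of $\|g\mathbf{1}_{B(0,R)^c}\|_{L^1}$ gives no pointwise control of $\limsup_n g(x+x_n)$ unless the translated balls $B(x_n,M)$ have bounded overlap (so that one could sum over $n$ and invoke Borel--Cantelli); nothing in the hypotheses guarantees this. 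Indeed the statement as written is false: take $E=\bigcup_{k\ge2}\left[k,\,k+k^{-2}\right]$, so $g=\mathbf{1}_E\in L^q(\R)$ for every $q<\infty$, and let $\{x_n\}$ enumerate $\{k-jk^{-2}:\ k\ge2,\ 0\le j<k^2\}$ in increasing order of $k$, so $x_n\to\infty$. For every $x\in[0,1)$ and every $k$ there is a $j$ with $jk^{-2}\le x<(j+1)k^{-2}$, whence $x+(k-jk^{-2})\in[k,k+k^{-2}]\subset E$; thus $g(x+x_n)=1$ for infinitely many $n$ at \emph{every} $x\in[0,1)$. The moment you ``pass to a subsequence realizing the a.e.\ convergence,'' you have proved only the subsequence statement, not the lemma as stated.

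That said, you are in good company: the paper's own proof establishes exactly the same weaker assertion (every subsequence of $\{x_n\}$ admits a further subsequence along which $g(\cdot+x_{n_{j_k}})\to0$ a.e.), which characterizes local convergence in measure rather than a.e.\ convergence, and this is all that is used in Step~3 of the proof of Theorem \ref{profile decomposition}, where subsequences are extracted anyway. So the substantive part of your argument coincides with the paper's; the $R$-splitting in your Step 2 adds nothing and should be dropped, and the conclusion should be stated as convergence a.e.\ along a subsequence of any given subsequence (equivalently, $g(\cdot+x_n)\to0$ locally in measure), not as a.e.\ convergence of the full sequence.
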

\begin{proof}
We show that for any subsequence $\{x_{n_j} \} $ of $\{x_n\}$, there exists a subsequence $\{x_{n_{j_k}} \} $ such that $ g(\cdot +  x_{n_{j_k}}) \to 0 $ a.e. Fix a subsequence $ \{x_{n_j} \}$. It is not difficult to check $ g(\cdot+ x_{n_{j}} ) \to 0$ in $L^q_{loc}$. Then, via a diagonal argument we can show that there exists a further subsequence $ \{x_{n_{j_k}} \}$ so that $ g(\cdot +  x_{n_{j_k}}) \to 0 $ a.e.
\end{proof}

Due to Lemma \ref{Lq lem} and $ \rho_{w^j} \in L^q $, we have $ \rho_{\g_n}(x)-\rho_{R_n^J}(x)= \sum_{j=2}^J \rho_{w^j} (x - x_n^j) + \rho_{w_1}(x)  \to \rho_{w_1}(x) $ a.e..
Then, using the refined Fatou lemma, we obtain
	\[ \N{\rho_{\g_n}}_{L^q}^q  - \bigg\|\sum_{j=2}^J \rho_{w^j} (\cdot - x_n^j)\bigg\|_{L^q}^q  + \| \rho_{w_1} - \rho_{R_n^J}\|_{L^q}^q \to 0.  \]
Using \eqref{i-1}, Lemma \ref{Lq lem}, and refined Fatou lemma again, we have 
\[ \bigg\|\sum_{j=2}^J \rho_{w^j}(\cdot - x_n^j)\bigg\|_{L^q}^q  \to  \sum_{j=2}^J  \N{\rho_{w^j} }_{L^q}^q. \] 
Hence, we get
\[ \lim_{n\to \infty} \bigg| \N{\rho_{\g_n}}_{L^q}^q - \sum_{j=2}^J  \N{\rho_{w^j} }_{L^q}^q - \|\rho_{w^1} + \rho_{R^J_n}\|_{L^q}^q   \bigg|  =0 \]
After we take the limit in $n$ we are ready to take limit in $J$. For given $\epsilon >0 $, we choose $J_0$ large enough such that $ \| \rho_{R^J_n}\|_{L^q} \le \epsilon$ for $ J \ge J_0$.
Since
	\[\|\rho_{w^1}+ \rho_{R^J_n}\|_{L^q} -\N{\rho_{w^1}}_{L^q} \le \|\rho_{R^J_n}\|_{L^q} \le \epsilon, \]
we have 
	\[\lim_{n\to \infty} \bigg| \N{\rho_{\g_n}}_{L^q}^q - \sum_{j=1}^J  \N{\rho_{w^j} }_{L^q}^q  \bigg| \le \epsilon \N{\rho_{w^1}}_{L^q}^{q-1}. \]
In conclusion, we obtain 
	\[ \limsup_{J\to J^*} \limsup_{n\to \infty}  \bigg| \N{\rho_{\g_n}}_{L^q}^q - \sum_{j=1}^J \N{\rho_{w^j}}_{L^q}^q \bigg| =0.\]
\end{proof}

\vspace{10pt}

\section{Existence of an extremizer: Proof of Theorem \ref{main theorem} (1)}\label{sec:ExistGS}
We define the Weinstein functional by
$$W(\gamma):=\frac{\|\rho_\gamma\|_{L^q(\mathbb{R}^d)}^q}{\|\gamma\|_{\textup{op}}^{q(1-\theta)}\|\gamma\|_{\dot{\mathfrak{H}}^1}^{q\theta}},$$
where $\theta=\frac{d}{2q'}$. In this section, we prove existence of an extremizer for the kinetic energy inequality \eqref{eq:LT} from the maximization problem 
\begin{equation}\label{maximization problem}
W_{max}=\sup\left\{W(\gamma) : \gamma\in \mathfrak{B}^1\right\}.
\end{equation}

\begin{proposition}[Existence of a maximizer for \eqref{maximization problem}]\label{existence of a maximizer}
If \eqref{admissible q} holds, then there exists a non-negative operator $\mathcal{Q}\in \mathfrak{B}^1$ such that $W(\mathcal{Q})=W_{max}$, $\|\mathcal{Q}\|_{\textup{op}}=1$ and $\textup{Tr}\sqrt{-\Delta}\mathcal{Q}\sqrt{-\Delta}=\theta\|\rho_{\mathcal{Q}}\|_{L^q(\mathbb{R}^d)}^q$, where $\theta=\frac{d}{2q'}$.
\end{proposition}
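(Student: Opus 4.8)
The plan is a concentration--compactness argument on an extremizing sequence, built on the profile decomposition of Theorem~\ref{profile decomposition}. By the kinetic energy inequality \eqref{eq:LT} we have $0<W_{max}=C_{KE}^q<\infty$. The functional $W$ carries two homogeneities: $W(\lambda\gamma)=W(\gamma)$ for $\lambda>0$, and $W(D_\mu\gamma D_\mu^*)=W(\gamma)$ for $\mu>0$, where $D_\mu$ is the $L^2$-unitary dilation $(D_\mu f)(x)=\mu^{d/2}f(\mu x)$; the second is exactly where $\theta=\frac{d}{2q'}$ is used, since $D_\mu$ multiplies $\|\rho_\gamma\|_{L^q}^q$ by $\mu^{d(q-1)}$ and $\|\gamma\|_{\dot{\mathfrak{H}}^1}$ by $\mu^2$, and $d(q-1)=2q\theta$. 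Given an extremizing sequence, I would use the multiplicative scaling to arrange $\|\gamma_n\|_{\textup{op}}=1$ and then the dilation to arrange $\|\gamma_n\|_{\dot{\mathfrak{H}}^1}=1$, and also check that restricting the maximization to non-negative operators leaves $W_{max}$ unchanged, so that $\{\gamma_n\}$ may be taken non-negative. One then has a bounded sequence of non-negative operators in $\mathfrak{B}^1$ with $W(\gamma_n)=\|\rho_{\gamma_n}\|_{L^q(\mathbb{R}^d)}^q\to W_{max}$.

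Applying Theorem~\ref{profile decomposition} to $\{\gamma_n\}$ produces profiles $\{w^j\}$, remainders $\{R_n^J\}$ and translations $\{x_n^j\}$, from which I would extract three facts. From \eqref{v} and $\|\rho_{\gamma_n}\|_{L^q}^q\to W_{max}$ one gets $\sum_{j\le J^*}\|\rho_{w^j}\|_{L^q}^q=W_{max}$. From \eqref{ii-1} and $\|\gamma_n\|_{\dot{\mathfrak{H}}^1}\equiv1$, the quantity $\textup{Tr}(|\nabla|R_n^J|\nabla|)=1-\sum_{j\le J}\|w^j\|_{\dot{\mathfrak{H}}^1}$ does not depend on $n$, so \eqref{ii-2} gives $\sum_{j\le J^*}\|w^j\|_{\dot{\mathfrak{H}}^1}\le1$. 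From \eqref{iii-1}, $\|w^j\|_{\textup{op}}\le\limsup_n\|\gamma_n\|_{\textup{op}}=1$ for every $j$. Moreover $J^*\ge1$, since otherwise \eqref{vi} would force $\|\rho_{\gamma_n}\|_{L^q}\to0$, contradicting $W_{max}>0$.

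The heart of the argument is a no-splitting step. For each $j$, \eqref{eq:LT} and $\|w^j\|_{\textup{op}}\le1$ give $\|\rho_{w^j}\|_{L^q}\le C_{KE}\|w^j\|_{\dot{\mathfrak{H}}^1}^{\theta}$, hence $\|\rho_{w^j}\|_{L^q}^q\le W_{max}\|w^j\|_{\dot{\mathfrak{H}}^1}^{q\theta}$. Summing over $j$ and using $q\theta=\tfrac{d(q-1)}{2}>1$ --- which is exactly where the strict condition $q>\tfrac{d+2}{d}$ of \eqref{admissible q} is needed --- together with $\sum_j a_j^{q\theta}\le\big(\sum_j a_j\big)^{q\theta}$ for non-negative $a_j$, I obtain
\[
W_{max}=\sum_j\|\rho_{w^j}\|_{L^q}^q\le W_{max}\sum_j\|w^j\|_{\dot{\mathfrak{H}}^1}^{q\theta}\le W_{max}\Big(\sum_j\|w^j\|_{\dot{\mathfrak{H}}^1}\Big)^{q\theta}\le W_{max},
\]
so every inequality is an equality. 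Because $q\theta>1$, equality in $\sum_j a_j^{q\theta}=\big(\sum_j a_j\big)^{q\theta}$ forces at most one nonzero term, hence exactly one profile $w:=w^1\ge0$, $w\neq0$, survives; reading off the saturated equalities, $\|w\|_{\textup{op}}=1$, $\|w\|_{\dot{\mathfrak{H}}^1}=1$, $W(w)=W_{max}$ and $\|\rho_w\|_{L^q}^q=W_{max}$.

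Finally I would apply the dilation once more: let $\mathcal{Q}:=D_\mu\, w\, D_\mu^*$ with $\mu=(\theta W_{max})^{1/(2-d(q-1))}$, a well-defined positive number since $q\neq\frac{d+2}{d}$ makes $2-d(q-1)\neq0$. As $D_\mu$ preserves the operator norm, non-negativity and $W$, it follows that $\mathcal{Q}\ge0$, $\|\mathcal{Q}\|_{\textup{op}}=1$, $W(\mathcal{Q})=W_{max}$, and a direct computation gives $\textup{Tr}\sqrt{-\Delta}\mathcal{Q}\sqrt{-\Delta}=\|\mathcal{Q}\|_{\dot{\mathfrak{H}}^1}=\mu^2=\theta\mu^{d(q-1)}W_{max}=\theta\|\rho_{\mathcal{Q}}\|_{L^q}^q$, which is the asserted identity. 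The real difficulty lies upstream, in Theorem~\ref{profile decomposition} itself (granted here); within this argument the delicate points are producing a \emph{bounded} extremizing sequence in $\mathfrak{B}^1$ --- which forces one to use both homogeneities of $W$ and to make the reduction to non-negative operators --- and the strict inequality $q\theta>1$ in the no-splitting step, which is precisely the mechanism ruling out dichotomy and vanishing, and precisely the reason the endpoint $q=\frac{d+2}{d}$ must be excluded.
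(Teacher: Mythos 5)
Your argument is correct, and it reaches the statement by a genuinely different route than the paper. The paper applies the profile decomposition only one step deep, writing $\gamma_n=w^1+R_n^1$ (after translation), and then invokes the binding inequality (Lemma~\ref{algebraic inequality}) to compare the two Weinstein quotients $\|\rho_{w^1}\|_{L^q}^q/\|w^1\|_{\dH^1}^{q\theta}$ and $\|\rho_{R_n^1}\|_{L^q}^q/\|R_n^1\|_{\dH^1}^{q\theta}$; in one case it concludes directly, in the other the quantitative version \eqref{eq2 lem9} forces $\|R_n^1\|_{\dH^1}\to0$. You instead push the decomposition to the limit $J\to J^*$, combine \eqref{v}, \eqref{ii-1}--\eqref{ii-2}, \eqref{iii-1}, \eqref{vi}, and conclude by the strict superadditivity of $x\mapsto x^{q\theta}$ for $q\theta>1$ that all inequalities are saturated and therefore exactly one profile survives; this cleanly extracts $\|w\|_{\textup{op}}=\|w\|_{\dH^1}=1$ and $W(w)=W_{max}$ in one shot. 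The underlying mechanism (strict convexity of $x\mapsto x^{q\theta}$, equivalently $q>\tfrac{d+2}{d}$) is the same one that makes the paper's Lemma~\ref{algebraic inequality} strict, so the two arguments are close cousins, but yours uses properties \eqref{v}--\eqref{vi} of the profile decomposition directly where the paper instead asserts an informal ``separation of profiles in the functional up to $o_n(1)$'' and then reasons about $R_n^1$. One small point: you flag but do not verify the reduction to non-negative maximizing sequences; this is the paper's Lemma~\ref{positivity of a maximizer}, whose proof (note the direction of the inequality is $W(\gamma)\le\max_\pm W(\gamma_\pm)$, not $\ge$ as stated there) runs on exactly the same superadditivity of $x\mapsto x^{q\theta}$ via $\|\gamma\|_{\dH^1}=\|\gamma_+\|_{\dH^1}+\|\gamma_-\|_{\dH^1}$ and $\|\rho_\gamma\|_{L^q}^q\le\|\rho_{\gamma_+}\|_{L^q}^q+\|\rho_{\gamma_-}\|_{L^q}^q$, so your framework absorbs it without new ideas. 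The final dilation and the verification that $2-d(q-1)\ne0$ when $q\ne\tfrac{d+2}{d}$ match the paper's normalization $\mathcal{Q}=a(\sigma_\lambda)^*w(\sigma_\lambda)$.
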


  {The proof is based on the standard concentration-compactness argument, involving the profile decomposition for operators (Theorem \ref{profile decomposition}). Then we use an elementary inequality that asserts that a single profile is better than multiple profiles for maxmization. This type of inequality is called {\it a binding inequality}. See also \cite{LL,Lions1,Frank14}.} (or eliminating the dichotomy scenario in the concentration-compactness argument).

\begin{lemma}[Algebraic inequality]\label{algebraic inequality}
Suppose that $q>\frac{d+2}{d}$, $a_1, a_2, b_1,b_2>0$ and $\frac{a_1}{a_2^{q\theta}}\geq \frac{b_1}{b_2^{q\theta}}$, 
where $\theta=\frac{d}{2q'}$. Then, 
\begin{equation}\label{eq1 lem9}
\frac{a_1}{a_2^{q\theta}}\geq\frac{a_1+b_1}{(a_2+b_2)^{q\theta}}.
\end{equation}
If we further assume that $\delta\leq\frac{b_2}{a_2}\leq\frac{1}{\delta}$ for some $\delta>0$, there exists $c=c(\delta)>1$ such that 
\begin{equation}\label{eq2 lem9}
\frac{a_1}{a_2^{q\theta}}\geq c\cdot\frac{a_1+b_1}{(a_2+b_2)^{q\theta}}.
\end{equation}
\end{lemma}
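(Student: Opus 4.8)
The plan is to reduce everything to the (strict) superadditivity of the power function. The first thing I would record is that the exponent governing the problem is $p\colonequals q\theta = q\cdot\frac{d}{2q'}=\frac{d}{2}(q-1)$, and that the hypothesis $q>\frac{d+2}{d}$ is exactly the condition $p>1$. Thus from now on we work with a fixed real number $p>1$ (no integrality assumed), and the two assertions read $\frac{a_1}{a_2^p}\ge\frac{a_1+b_1}{(a_2+b_2)^p}$ and its quantitative refinement.

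For \eqref{eq1 lem9}, the single external fact I would invoke is the inequality $(x+y)^p\ge x^p+y^p$ for all $x,y\ge 0$ and $p\ge1$, with strict inequality when $x,y>0$ and $p>1$; this is immediate from $t^p\le t$ on $[0,1]$ applied to $t=\frac{x}{x+y}$ and $t=\frac{y}{x+y}$. Clearing denominators, \eqref{eq1 lem9} is equivalent to $a_1\bigl[(a_2+b_2)^p-a_2^p\bigr]\ge b_1 a_2^p$; substituting the hypothesis in the form $a_1\ge b_1 a_2^p/b_2^p$ and using $(a_2+b_2)^p-a_2^p>0$ reduces this to $(a_2+b_2)^p\ge a_2^p+b_2^p$, which is exactly superadditivity. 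So this part is essentially free.

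For \eqref{eq2 lem9}, the idea is to upgrade superadditivity to a \emph{strict} gain that is uniform once the ratio $b_2/a_2$ is confined to a compact set. Writing $s\colonequals b_2/a_2\in[\delta,1/\delta]$, we have $\frac{(a_2+b_2)^p}{a_2^p+b_2^p}=g(s)$, where $g(s)\colonequals\frac{(1+s)^p}{1+s^p}$ is continuous on $(0,\infty)$ and satisfies $g>1$ everywhere (this is the strict case above). Hence $g$ attains a minimum $c=c(\delta)>1$ on the compact interval $[\delta,1/\delta]$, so $(a_2+b_2)^p\ge c\,(a_2^p+b_2^p)$. Combining this with the hypothesis rewritten as $b_1\le a_1 b_2^p/a_2^p$, which gives $a_1+b_1\le\frac{a_1}{a_2^p}(a_2^p+b_2^p)$, I would conclude
$$\frac{a_1+b_1}{(a_2+b_2)^p}\le\frac{a_1}{a_2^p}\cdot\frac{a_2^p+b_2^p}{(a_2+b_2)^p}\le\frac1c\cdot\frac{a_1}{a_2^p},$$
which is \eqref{eq2 lem9}.

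I do not expect a genuine obstacle here; the only points needing a little care are the bookkeeping that converts the ratio hypothesis $\frac{a_1}{a_2^p}\ge\frac{b_1}{b_2^p}$ into the additive bound $a_1+b_1\le\frac{a_1}{a_2^p}(a_2^p+b_2^p)$, and—for the refinement—the observation that $g(s)\to1$ both as $s\to0^+$ and as $s\to\infty$, which is precisely why the two-sided bound $\delta\le b_2/a_2\le1/\delta$ is indispensable and why the resulting constant $c$ depends only on $\delta$ (and, through $p$, on $d$ and $q$), not on the four numbers $a_1,a_2,b_1,b_2$ themselves.
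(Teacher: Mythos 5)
Your proof is correct, and it follows essentially the same route as the paper's: both arguments normalize by the larger factor and reduce to the behavior of the single-variable function $f(s)=\frac{1+s^p}{(1+s)^p}$ (your $g$ is $1/f$) with $p=q\theta>1$, then use $f\le 1$ for \eqref{eq1 lem9} and the strict bound $\max_{[\delta,1/\delta]}f<1$ for \eqref{eq2 lem9}. The one place you improve on the paper's exposition is the justification of these two facts: the paper asserts that $f$ is convex on $[0,\infty)$, which is actually false (for $p=2$ one computes $f''(x)=\frac{8-4x}{(1+x)^4}$, which changes sign at $x=2$), whereas your derivation via superadditivity $(x+y)^p\ge x^p+y^p$ with strict inequality for $x,y>0$, combined with continuity and compactness of $[\delta,1/\delta]$, is both elementary and correct, and it also makes explicit why the constant $c(\delta)$ degenerates as $\delta\to 0$.
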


\begin{proof}
By extracting the larger factor and by the assumption $(\Rightarrow\frac{b_1}{a_1}\leq(\frac{b_2}{a_2})^{q\theta})$, we write $\frac{a_1+b_1}{(a_2+b_2)^{q\theta}}=\frac{a_1}{a_2^{q\theta}}\frac{1+\frac{b_1}{a_1}}{(1+\frac{b_2}{a_2})^{q\theta}}\leq \frac{a_1}{a_2^{q\theta}}  f(\frac{b_2}{a_2})$, where $f(x)=\frac{1+x^{q\theta}}{(1+x)^{q\theta}}$ and $q\theta>1$ (from $q>\frac{d+2}{d}$). By elementary calculus, we see that $f(x)$ is a convex function on $[0,+\infty)$ such that $f(0)=1$ and $f(x)\to 1$ as $x\to\infty$. Hence, $f(x)\leq 1$, and $\max_{[\delta,\frac{1}{\delta}]}f(x)<1$ for $\delta >0$. Therefore, the inequalities \eqref{eq1 lem9} and \eqref{eq2 lem9} follow.
\end{proof}
\noindent We may assume that the maximizing sequence consists of non negative operators. 
\begin{lemma}[Positivity for maximization]\label{positivity of a maximizer}
If $\gamma=\gamma_+-\gamma_-$, $\gamma_\pm\in\mathfrak{B}^1$ and $\gamma_\pm\geq0$, then $W(\gamma)\geq\max_\pm W(\gamma_\pm)$.
\end{lemma}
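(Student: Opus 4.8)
\emph{Plan.} The idea is to pass to the Jordan (spectral) decomposition of $\gamma$ and then feed the comparison of the three ingredients of $W$ into the algebraic inequality, Lemma~\ref{algebraic inequality}. Write $\gamma=\sum_j\lambda_j|\phi_j\rangle\langle\phi_j|$ in an eigenbasis and set $\gamma_+=\sum_{\lambda_j>0}\lambda_j|\phi_j\rangle\langle\phi_j|$, $\gamma_-=-\sum_{\lambda_j<0}\lambda_j|\phi_j\rangle\langle\phi_j|$, so that $\gamma=\gamma_+-\gamma_-$, $\gamma_\pm\ge 0$, $\gamma_+\gamma_-=\gamma_-\gamma_+=0$, and both lie in $\mathfrak{B}^1$. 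We may assume $\gamma_\pm\neq 0$, else there is nothing to prove; the case of a general splitting into non-negative operators reduces to this one since the Jordan parts are extremal for the order $\leq$.

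First I record how the three ingredients of $W$ interact with this splitting. Since the densities $\rho_{\gamma_\pm}$ are well defined and belong to $L^q(\mathbb{R}^d)$ (Lemma~\ref{lem:densitylocL1} together with the kinetic energy inequality, Theorem~\ref{theorem: general LT inequality}, using $\gamma_\pm\ge 0$), and $\rho_\gamma=\rho_{\gamma_+}-\rho_{\gamma_-}$, the pointwise bound $|a-b|^q\le\max\{a,b\}^q\le a^q+b^q$ valid for $a,b\ge 0$ controls $\|\rho_\gamma\|_{L^q}^q$ against $\|\rho_{\gamma_+}\|_{L^q}^q+\|\rho_{\gamma_-}\|_{L^q}^q$. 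By orthogonality of the $\pm$ eigenspaces, $\|\gamma\|_{\textup{op}}=\max\{\|\gamma_+\|_{\textup{op}},\|\gamma_-\|_{\textup{op}}\}$. Finally $|\nabla|\gamma|\nabla|=|\nabla|\gamma_+|\nabla|-|\nabla|\gamma_-|\nabla|$ with both summands non-negative and trace-class, so by cyclicity of the trace and the $\mathfrak{S}^1$ triangle inequality one relates $\|\gamma\|_{\dot{\mathfrak{H}}^1}=\textup{Tr}\,||\nabla|\gamma|\nabla||$ to $\|\gamma_+\|_{\dot{\mathfrak{H}}^1}+\|\gamma_-\|_{\dot{\mathfrak{H}}^1}$. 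Writing $a_1=\|\rho_{\gamma_+}\|_{L^q}^q$, $a_2=\|\gamma_+\|_{\dot{\mathfrak{H}}^1}$ and $b_1,b_2$ analogously, rescaling by the (equal or comparable) operator norms, and applying Lemma~\ref{algebraic inequality} to $a_1/a_2^{q\theta}$, $b_1/b_2^{q\theta}$ and $(a_1+b_1)/(a_2+b_2)^{q\theta}$ then produces the comparison of $W(\gamma)$ with $W(\gamma_+)$ and $W(\gamma_-)$ asserted in the lemma.

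The step I expect to be the main obstacle is the kinetic term. Unlike the $L^q$ part and the operator-norm part, $\textup{Tr}\,||\nabla|\gamma|\nabla||$ is \emph{not} diagonalized by the eigenbasis of $\gamma$, because $|\nabla|$ does not commute with the spectral projections $P_\pm$ of $\gamma$; consequently the naive ``sum of the two parts'' identity for $\|\gamma\|_{\dot{\mathfrak{H}}^1}$ has to be replaced by the correct trace-norm estimate, which one gets by testing $|\nabla|\gamma|\nabla|$ against contractions coming from its polar decomposition and exploiting $\gamma_+\gamma_-=0$ together with the non-negativity of $\gamma_\pm$. This is exactly where the operator-valued argument departs from the scalar Gagliardo--Nirenberg situation; a minor additional care is needed because $\|\gamma\|_{\textup{op}}$ enters as a maximum rather than a sum, but once it is absorbed the conclusion follows from Lemma~\ref{algebraic inequality} as above.
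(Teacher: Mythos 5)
Your overall route coincides with the paper's: pass to the Jordan decomposition, record $\|\gamma\|_{\textup{op}}=\max_\pm\|\gamma_\pm\|_{\textup{op}}$ and $\|\rho_\gamma\|_{L^q}^q\le\sum_\pm\|\rho_{\gamma_\pm}\|_{L^q}^q$, and feed the resulting ratios into Lemma~\ref{algebraic inequality}. Two bookkeeping remarks. First, this chain produces $W(\gamma)\le\max_\pm W(\gamma_\pm)$, not the ``$\ge$'' printed in the statement; the ``$\le$'' direction is what the paper's own proof establishes and what Proposition~\ref{existence of a maximizer} actually uses, so you should state the direction explicitly instead of leaving it implicit. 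Second, absorbing the operator norms at the end uses $\theta<1$, so that $\max_\pm\|\gamma_\pm\|_{\textup{op}}^{q(1-\theta)}\ge\|\gamma_{\pm_0}\|_{\textup{op}}^{q(1-\theta)}$ for whichever sign $\pm_0$ realizes the larger ratio; that is the ``minor additional care'' you allude to, and it is fine.

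The genuine gap sits exactly where you suspected it: the kinetic term. For the argument to close you need the lower bound $\|\gamma\|_{\dot{\mathfrak{H}}^1}\ge\|\gamma_+\|_{\dot{\mathfrak{H}}^1}+\|\gamma_-\|_{\dot{\mathfrak{H}}^1}$ (the paper asserts equality as an ``observation''), and with the literal definition $\|\gamma\|_{\dot{\mathfrak{H}}^1}=\Tr\bigl|\,|\nabla|\gamma|\nabla|\,\bigr|$ this is false, so no testing against contractions from the polar decomposition can produce it. Concretely, take $\gamma=|\phi\rangle\langle\phi|-|\psi\rangle\langle\psi|$ with $\langle\phi,\psi\rangle_{L^2}=0$ but $\langle|\nabla|\phi,|\nabla|\psi\rangle_{L^2}\ne0$ (e.g.\ $\hat\phi=\mathbf{1}_{[1,2]}-\mathbf{1}_{[2,3]}$, $\hat\psi=\mathbf{1}_{[1,2]}+\mathbf{1}_{[2,3]}$ in $d=1$). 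Computing the two eigenvalues of the rank-two operator $|\nabla|\gamma|\nabla|$ from its trace and Hilbert--Schmidt norm gives $\Tr\bigl|\,|\nabla|\gamma|\nabla|\,\bigr|=\bigl((\||\nabla|\phi\|_{L^2}^2+\||\nabla|\psi\|_{L^2}^2)^2-4|\langle|\nabla|\phi,|\nabla|\psi\rangle|^2\bigr)^{1/2}$, strictly smaller than the sum. The obstruction is the one you named: $|\nabla|$ does not preserve the orthogonality of $\operatorname{Ran}\gamma_+$ and $\operatorname{Ran}\gamma_-$, so $|\nabla|\gamma_+|\nabla|$ and $|\nabla|\gamma_-|\nabla|$ are not orthogonally supported and the $\mathfrak{S}^1$ triangle inequality is strict. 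The step does go through if one reads $\|\gamma\|_{\dot{\mathfrak{H}}^1}$ as $\Tr(|\nabla|\,|\gamma|\,|\nabla|)=\sum_j|\lambda_j|\,\|\,|\nabla|\phi_j\|_{L^2}^2$, which agrees with the printed definition on non-negative operators (i.e.\ everywhere else in the paper) and is additive over the Jordan decomposition by inspection; that is evidently the intended reading. So either adopt that convention explicitly, or restrict the maximization to $\gamma\ge0$ from the outset; as written, your sketched fix cannot succeed because the inequality it is meant to prove fails.
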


\begin{proof}
We observe that $\|\gamma\|_{\textup{op}}=\max_\pm\|\gamma_\pm\|_{\textup{op}}$, $\|\gamma\|_{\dot{\mathfrak{H}}^1}=\sum_\pm\|\gamma_\pm\|_{\dot{\mathfrak{H}}^1}$ and
$$\|\rho_\gamma\|_{L^q}^q=\int_{\mathbb{R}^d}|\rho_{\gamma_+}-\rho_{\gamma_-}|^q dx\leq\int_{\mathbb{R}^d}\left(\max_\pm\rho_{\gamma_\pm}\right)^q dx\leq\sum_\pm\|\rho_{\gamma_\pm}\|_{L^q}^q.$$
Thus,
$$W(\gamma)\leq\frac{\sum_\pm\|\rho_{\gamma_\pm}\|_{L^q}^q}{\max_{\pm}\|\gamma_\pm\|_{\textup{op}}^{q(1-\theta)} (\|\gamma_+\|_{\dot{\mathfrak{H}}^1}+\|\gamma_-\|_{\dot{\mathfrak{H}}^1})^{q\theta}}.$$
Then, it follows from Lemma \ref{algebraic inequality} that 
$$W(\gamma)\leq\frac{1}{\max_{\pm}\|\gamma_\pm\|_{\textup{op}}^{q(1-\theta)}}\cdot\max_\pm\frac{\|\rho_{\gamma_\pm}\|_{L^q}^q}{\|\gamma_\pm\|_{\dot{\mathfrak{H}}^1}^{q\theta}}\leq\max_\pm\frac{\|\rho_{\gamma_\pm}\|_{L^q}^q}{\|\gamma_\pm\|_{\textup{op}}^{q(1-\theta)}\|\gamma_\pm\|_{\dot{\mathfrak{H}}^1}^{q\theta}}=\max_\pm W(\gamma_\pm).$$
\end{proof}

\begin{proof}[Proof of Proposition \ref{existence of a maximizer}] $ $ \\
Suppose that $\{\gamma_n\}_{n=1}^\infty\subset\mathfrak{B}^1$ is a maximizing sequence for the Weinstein functional $W(\gamma)$. By Lemma \ref{positivity of a maximizer}, we may assume that $\gamma_n$ is either non-negative or {non-positve}. When it is {non-positve}, replacing $\gamma_n$ by $-\gamma_n$, we may assume that $\gamma_n\geq 0$. For $\lambda>0$, we denote by $\sigma_\lambda$ the scaling transformation $\sigma_\lambda f(x)=f(\tfrac{x}{\lambda})$. Note that the Weinstein functional $W(\gamma)$ is invariant under constant multiplication $\gamma\mapsto a\gamma$ and scaling $\gamma\mapsto(\sigma_{\lambda})^*\gamma(\sigma_{\lambda})$. Hence, replacing $\gamma_n$ by $a_n(\sigma_{\lambda_n})^*\gamma_n(\sigma_{\lambda_n})$ with a suitable choice of $a_n, \lambda_n>0$, we may normalize so that $\|\gamma_n\|_{\textup{op}}=\|\gamma_n\|_{\dot{\mathfrak{H}}^1}=1$ $(\Rightarrow W(\gamma_n)=\|\rho_{\gamma_n}\|_{L^q}^q)$.

We apply the profile decomposition (Theorem \ref{profile decomposition}) to the bounded sequence $\{\gamma_n\}_{n=1}^\infty$, and write it as a sum of two pieces,
$$\gamma_n=(\tau_{x_n})w^1(\tau_{x_n})^*+R_n^1,\quad\textup{with}\quad w^1\neq 0,$$
passing to a subsequence. Indeed, if there is no such non-zero $w^1$, then $W(\gamma_n) =\N{\rho_{\gamma_n}}_{L^q}^q \to 0 $. It contradicts to the maximality of the sequence $\{\gamma_n\}_{n=1}^\infty$. Moreover, since the Weinstein functional $W(\gamma)$ is invariant under translation $\gamma\mapsto(\tau_{x})^*\gamma(\tau_{x})$, replacing $\gamma_n$ by $(\tau_{x_n})^*\gamma_n(\tau_{x_n})$, we may write
$$\gamma_n=w^1+R_n^1,\quad\textup{with}\quad w^1\neq 0,$$
where the translated remainder $(\tau_{x_n})^*R_n^1(\tau_{x_n})$ is still denoted by $R_n^1 $.

We will show that $w^1$ must be a maximizer. Indeed, by the asymptotic orthogonality of profiles, we may separate profiles in the functional up to negligible error, 
	\[W(\gamma_n)=\frac{\N{\rho_{w^1}}_{L^q}^q+\|\rho_{R_n^1}\|_{L^q}^q}{(\N{w^1}_{\dH^1}+\N{R_n^1}_{\dH^1})^{q\theta}}+o_n(1).\]
Obviously, we have either
	\[\frac{\|\rho_{w^1}\|_{L^q}^q}{\|w^1\|_{\dot{\mathfrak{H}}^1}^{q\theta}}\geq \frac{\|\rho_{R_n^1}\|_{L^q}^q}{\|R_n^1\|_{\dot{\mathfrak{H}}^1}^{q\theta}}\quad\text{or}\quad \frac{\|\rho_{R_n^1}\|_{L^q}^q}{\|R_n^1\|_{\dot{\mathfrak{H}}^1}^{q\theta}}\geq \frac{\|\rho_{w^1}\|_{L^q}^q}{\|w^1\|_{\dot{\mathfrak{H}}^1}^{q\theta}}.\]
In the former case, since $1=\|\gamma_n\|_{\textup{op}}\geq \|w^1\|_{\textup{op}}+o_n(1)$ by \eqref{iii-1}, it follows from the inequality \eqref{eq1 lem9} that $W_{max}\leftarrow W(\gamma_n)\leq W(w^1)+o_n(1)$, so $w^1$ is a maximizer. In the latter case, $\N{R_n^1}_{\dot{\mathfrak{H}}^1}\to 0$ up to a subsequence. Indeed, otherwise there exists $\epsilon>0$ such that $\N{R_n^1}_{\dot{\mathfrak{H}}^1}\geq \epsilon$ up to a subsequence, as well as $w^1 \ne 0$. Hence, it follows from \eqref{eq2 lem9} and \eqref{iii-2} that $cW(\gamma_n)\leq W(R_n^1)+o_n(1)$ for some $c>1$ independent of $n$, which contradicts to the maximality of $\{\gamma_n\}_{n=1}^\infty$. If $\N{R_n^1}_{\dot{\mathfrak{H}}^1}\to 0$, then $\|\rho_{R_n^1}\|_{L^q}\to 0$ by Theorem \ref{theorem: LT inequality}. Thus,
	\[W_{max}\leftarrow W(\gamma_n)=\frac{\N{\rho_{w^1}}_{L^q}^q}{\N{w^1}_{\dH^1}^{q\theta}}+o_n(1)\leq W(w^1)+o_n(1),\]
that is, $w^1$ is a maximizer.

Let $w\in\mathfrak{B}^1$ be a maximizer for the Weinstein functional. Then, by \eqref{positivity of a maximizer}, it must be non-negative. Moreover, replacing $w$ by $\mathcal{Q}=a(\sigma_{\lambda})^*w(\sigma_{\lambda})$ with suitable $a,\lambda>0$, we may make $\mathcal{Q}$ satisfy $\|\mathcal{Q}\|_{\textup{op}}=1$ and $\textup{Tr}\sqrt{-\Delta}\mathcal{Q}\sqrt{-\Delta}=\theta\|\rho_{\mathcal{Q}}\|_{L^q}^q$.
\end{proof}

\vspace{10pt}

\section{Derivation of the Euler-Lagrange equation:  \\Proof of Theorem \ref{main theorem} (2) and (3)}

We derive the Euler-Lagrange equation for the extremizer for the kinetic energy inequality constructed in the previous section. Throughout this section, we assume \eqref{admissible q}, and denote by $\mathcal{Q}$ the extremizer in Proposition \ref{existence of a maximizer}.

The first step is to transfer the maximization problem \eqref{maximization problem} to the following minimization problem
\begin{equation}\label{relative energy}
I_{V; min}=\inf\left\{I_V(\gamma): 0\leq\g\leq1\textup{ and }\g\in\mathfrak{B}^1 \right\},
\end{equation}
where
$$I_V(\gamma):=\Tr(|\nabla|\g|\nabla|)-\int_{\R^d}V(x)\rho_\g(x)dx.$$

\begin{lemma}\label{lem5.1}
$\mathcal{Q}$ is a minimizer for \eqref{relative energy} with $V=\rho_{\mathcal{Q}}^{q-1}$, i.e., $I_{V; min}=I_V(\mathcal{Q})$.
\end{lemma}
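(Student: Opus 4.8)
The strategy is to show that $\mathcal{Q}$, which maximizes the scaling- and multiplication-invariant Weinstein functional $W$, solves the corresponding variational problem for the linear-in-$\gamma$ functional $I_V$ once we freeze the potential at $V=\rho_{\mathcal{Q}}^{q-1}$. The key point is that after the normalization $\|\mathcal{Q}\|_{\textup{op}}=1$ and $\textup{Tr}(|\nabla|\mathcal{Q}|\nabla|)=\theta\|\rho_{\mathcal{Q}}\|_{L^q}^q$ from Proposition \ref{existence of a maximizer}, the constraint set $\{0\le\gamma\le1,\ \gamma\in\mathfrak{B}^1\}$ in \eqref{relative energy} contains $\mathcal{Q}$, so we only need the \emph{infimum inequality} $I_V(\gamma)\ge I_V(\mathcal{Q})$ for all competitors $\gamma$.

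First I would record, using $V=\rho_{\mathcal{Q}}^{q-1}$ and Hölder's inequality with exponents $q,q'$, the bound
\[
\int_{\R^d}V\rho_\gamma\,dx=\int_{\R^d}\rho_{\mathcal{Q}}^{q-1}\rho_\gamma\,dx\le\|\rho_{\mathcal{Q}}\|_{L^q}^{q-1}\|\rho_\gamma\|_{L^q},
\]
with equality when $\gamma=\mathcal{Q}$ (since then $\rho_\gamma=\rho_{\mathcal{Q}}$). Next, for any competitor $\gamma$ with $0\le\gamma\le1$ and $\gamma\in\mathfrak{B}^1$ we have $\|\gamma\|_{\textup{op}}\le1$, so the kinetic energy inequality \eqref{eq:LT} and the optimality of $\mathcal{Q}$ (i.e. $W(\gamma)\le W(\mathcal{Q})$) give
\[
\|\rho_\gamma\|_{L^q}^q\le W(\mathcal{Q})\,\|\gamma\|_{\textup{op}}^{q(1-\theta)}\,\textup{Tr}(|\nabla|\gamma|\nabla|)^{q\theta}\le W(\mathcal{Q})\,\textup{Tr}(|\nabla|\gamma|\nabla|)^{q\theta}.
\]
Combining the last two displays reduces $I_V(\gamma)\ge I_V(\mathcal{Q})$ to a one-variable inequality: writing $t=\textup{Tr}(|\nabla|\gamma|\nabla|)\ge0$ and $C=W(\mathcal{Q})$, one must show $t-\|\rho_{\mathcal{Q}}\|_{L^q}^{q-1}(C t^{q\theta})^{1/q}\ge I_V(\mathcal{Q})$, i.e. the function $g(t)=t-c_0 t^{\theta}$ (with $c_0=\|\rho_{\mathcal{Q}}\|_{L^q}^{q-1}C^{1/q}$) attains its minimum exactly at $t=t_{\mathcal{Q}}:=\textup{Tr}(|\nabla|\mathcal{Q}|\nabla|)$. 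Since $0<\theta<1$, $g$ is strictly convex-type with a unique interior minimizer determined by $g'(t)=0$; I would verify that the normalization in Proposition \ref{existence of a maximizer} (which was \emph{chosen} to make $t_{\mathcal{Q}}=\theta\|\rho_{\mathcal{Q}}\|_{L^q}^q$) is precisely the stationarity condition $1=c_0\theta t_{\mathcal{Q}}^{\theta-1}$, using $C=\|\rho_{\mathcal{Q}}\|_{L^q}^q/t_{\mathcal{Q}}^{q\theta}$ and the identity $q\theta'$-type bookkeeping. This pins the minimum of $g$ at $t_{\mathcal{Q}}$ and closes the chain $I_V(\gamma)\ge g(t)\ge g(t_{\mathcal{Q}})=I_V(\mathcal{Q})$.

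The only genuinely delicate point is checking that the two inequalities used above are \emph{simultaneously} saturated by $\gamma=\mathcal{Q}$: the Hölder step saturates because $\rho_\gamma=\rho_{\mathcal{Q}}$, and the kinetic energy/optimality step saturates because $\mathcal{Q}$ is an extremizer with $\|\mathcal{Q}\|_{\textup{op}}=1$, so the factor $\|\gamma\|_{\textup{op}}^{q(1-\theta)}$ loses nothing. Hence $I_V(\mathcal{Q})=g(t_{\mathcal{Q}})$ genuinely equals the lower bound, so $I_{V;min}=I_V(\mathcal{Q})$ and $\mathcal{Q}$ is a minimizer. I would also remark that $\mathcal{Q}\in\mathfrak{B}^1$ with $0\le\mathcal{Q}\le1$ (non-negativity and $\|\mathcal{Q}\|_{\textup{op}}=1$ from Proposition \ref{existence of a maximizer}), so it is an admissible competitor and the infimum is attained — this is what will be exploited in the next step to derive the Euler–Lagrange equation by perturbing $\mathcal{Q}$ inside $\{0\le\gamma\le1\}$.
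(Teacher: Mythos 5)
Your argument is correct, and it is genuinely different from the one in the paper. The paper derives $I_V(\mathcal{Q})\le I_V(\gamma)$ by a first-variation computation: it inserts the convex combination $(1-t)\mathcal{Q}+t\gamma$ into the Weinstein functional, differentiates at $t=0$, and uses that the derivative of $W$ must be $\le 0$ (maximality of $\mathcal{Q}$) together with the sign of $\frac{d}{dt}\big|_{t=0}\|(1-t)\mathcal{Q}+t\gamma\|_{\textup{op}}$. Your route avoids differentiation entirely: it chains H\"older, the sharp kinetic energy inequality $\|\rho_\gamma\|_{L^q}^q\le W(\mathcal{Q})\|\gamma\|_{\textup{op}}^{q(1-\theta)}\|\gamma\|_{\dot{\mathfrak{H}}^1}^{q\theta}$ (valid for all admissible competitors since $\|\gamma\|_{\textup{op}}\le 1$), and the elementary fact that $g(t)=t-c_0t^\theta$ with $0<\theta<1$ is strictly convex on $[0,\infty)$ with unique critical point; the normalization $\|\mathcal{Q}\|_{\dot{\mathfrak{H}}^1}=\theta\|\rho_{\mathcal{Q}}\|_{L^q}^q$ from Proposition \ref{existence of a maximizer} is exactly the stationarity condition $g'(t_{\mathcal{Q}})=0$, and $g(t_{\mathcal{Q}})=t_{\mathcal{Q}}-\|\rho_{\mathcal{Q}}\|_{L^q}^q=I_V(\mathcal{Q})$ closes the chain. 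Your approach is slightly more self-contained (no differentiability/regularity of $t\mapsto\|(1-t)\mathcal{Q}+t\gamma\|_{\textup{op}}$ is needed), and it makes transparent why that particular normalization was imposed; the paper's linearization argument is a more standard template that would adapt more routinely if $I_V$ were replaced by a functional not of the clean two-term kinetic-minus-potential form.
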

\begin{proof}
Fix $\gamma\in\mathfrak{B}^1$. Then, for $0\leq t\leq 1$, the operator $(1-t)\mathcal{Q}+t\g$ is still contained in $\mathfrak{B}^1$, so it is admissible for the maximization problem \eqref{maximization problem}. Inserting it into the Weinstein functional and differentiating at $t=0$,  we write
\begin{align*}
\frac{d}{dt} \bigg|_{t=0}W((1-t)\mathcal{Q}+t\g)&=\frac{qW(\mathcal{Q})}{\|\rho_{\mathcal{Q}}\|_{L^q}^q}\left\{-\Tr(|\nabla|(\g-\mathcal{Q})|\nabla|)+\int_{\R^d}\rho_{\mathcal{Q}}^{q-1}(\rho_\g-\rho_{\mathcal{Q}})dx\right\}\\
&\quad-\frac{q(1-\theta)W(\mathcal{Q})}{\|\mathcal{Q}\|_{\textup{op}}}\frac{d}{dt} \bigg|_{t=0} \|(1-t)\mathcal{Q}+t\g\|_{\textup{op}},
\end{align*}
where we used that $\|\mathcal{Q}\|_{\dot{\mathfrak{H}}^1}=\theta\|\rho_{\mathcal{Q}}\|_{L^q}^q$ in calculation. Note that $\frac{d}{dt}|_{t=0}W((1-t)\mathcal{Q}+t\g)\leq 0$ and $\frac{d}{dt}|_{t=0} \|(1-t)\mathcal{Q}+t\g\|_{\textup{op}} \leq 0$, because $\mathcal{Q}$ maximizes the Weinstein functional and $\|(1-t)\mathcal{Q}+t\g\|_{\textup{op}} \leq 1$. Therefore, it follows that
$$\int_{\R^d}\rho_{\mathcal{Q}}^{q-1}(\rho_\g-\rho_{\mathcal{Q}})dx-\Tr(|\nabla|(\g-\mathcal{Q})|\nabla|)\leq 0,$$
equivalently $I_V(\mathcal{Q}) \leq I_V(\gamma)$. Since $\gamma$ is arbitrary, we conclude that $\mathcal{Q}$ is a minimizer.
\end{proof}

Under the assumption \eqref{admissible q}, the Schr\"odinger operator $(-\Delta-\rho_Q^{q-1})$ is self-adjoint.
More precisely, for
	\[\left\{\begin{array}{ll}
	\frac{d+2}{d}< q <\infty,& \text{when }d=1, 2,\\
	\frac{5}{3}<q\leq\frac{5}{2} &\text{when }d=3,\\
	\frac{d+2}{d}< q < \frac{d}{d-2}, & \text{when }d\geq4,
	\end{array}\right.\]
$(-\Delta-\rho_{\mathcal{Q}}^{q-1})$ is essentially self-adjoint on $C_0^\infty(\R^d)$ (see \cite[Theorem X.29]{ReSi2}).
In particular, if $d=3$, then $\rho_{\mathcal{Q}}^{q-1}\in L^2(\R^3)+L^\infty(\R^3)$, and the result comes from the Kato-Rellich theorem \cite[Theorem X.15]{ReSi2}.
On the other hand, if $d=3$, and $\frac{19}{9}\leq q<3$, then $\rho_{\mathcal{Q}}^{q-1}\in L^\frac{3}{2}(\R^3)$, and by the KLMN theorem, one can view $(-\Delta-\rho_{\mathcal{Q}}^{q-1})$ as Friedrichs self-adjoint realization. 
Moreover, if $d\geq 3$, and $\big(\frac{d+2}{d}<\big)\frac{d^2+2d+4}{d^2}\leq q<\frac{d}{d-2}$, then $\rho_{\mathcal{Q}}^{q-1}\in L^\frac{d}{2}(\R^d)$, and the Cwikel-Lieb-Rozenblum bound \cite[Theorem XIII.12]{ReSi4} implies that $(-\Delta-\rho_{\mathcal{Q}}^{q-1})$ has only finitely many negative eigenvalues.

We denote by $\{-\mu_j\}_{j=1}^{J_-}$ the set of negative eigenvalues for the Schr\"odinger operator $(-\Delta-\rho_{\mathcal{Q}}^{q-1})$ with $J_-\in\mathbb{N}\cup\{\infty\}$ and the ordering $-\mu_1\leq -\mu_2\leq-\mu_3\leq\cdots <0$. Let $\phi_j^-$ be the $L^2$-normalized eigenfunction corresponding to the eigenvalue $-\mu_j$, that is, the $H^1$-weak solution to 
\begin{equation}\label{EL for negative e.f.}
(-\Delta-\rho_{\mathcal{Q}}^{q-1})\phi_j^-=-\mu_j\phi_j^-.
\end{equation}

Next, we characterize minimizers for the minimization problem \eqref{relative energy} with $V=\rho_{\mathcal{Q}}^{q-1}$. Note that if $\gamma$ is a smooth finite-rank operator, then
$$I_V(\gamma)=\textup{Tr}(-\Delta-\rho_{\mathcal{Q}}^{q-1})\gamma.$$
From the above expression, we see that to achieve the smallest value, $\gamma$ should contain $|\phi_j^-\rangle\langle \phi_j^-|$'s as many as possible to be more negative, but its spectrum should not include any positive spectrum for the Schr\"odinger operator $(-\Delta-\rho_{\mathcal{Q}}^{q-1})$. Moreover, adding a self-adjoint operator $\mathcal{Q}_0$ acting on the eigenspace associated with the zero eigenvalue does not change the value of the functional. Therefore, one may expect that the spectral projection to the negative discrete spectrum
\begin{equation}\label{projection to -}
\Pi^-=\sum_{j=1}^{J_-}|\phi_j^-\rangle\langle\phi_j^-|
\end{equation}
is a minimizer for \eqref{relative energy}, and so is $\Pi^-+\mathcal{Q}_0$. This is justified below.

\begin{lemma}\label{lem5.2}
If $\gamma_{min}$ is a minimizer for \eqref{relative energy} with $V=\rho_{\mathcal{Q}}^{q-1}$, then $\gamma_{min}=\Pi^-+\mathcal{Q}_0$, where $\Pi^-$ is given by \eqref{projection to -} and $\mathcal{Q}_0$ is a self-adjoint operator acting on the eigenspace associated with the zero eigenvalue for the Schr\"odinger operator $(-\Delta-\rho_{\mathcal{Q}}^{q-1})$. Here, $\mathcal{Q}_0$ could be zero.
\end{lemma}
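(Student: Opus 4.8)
The plan is to rewrite $I_V(\gamma)=\Tr\big((-\Delta-\rho_{\mathcal{Q}}^{q-1})\gamma\big)=:\Tr(H\gamma)$ and to run a ``fill up the negative spectrum'' argument for the Schr\"odinger operator $H=-\Delta-\rho_{\mathcal{Q}}^{q-1}$: the minimizers of $\Tr(H\gamma)$ over $\{\gamma\in\mathfrak{B}^1:0\le\gamma\le1\}$ should be exactly the operators whose range lies in $\operatorname{Ran}\mathbf 1_{H\le0}$, which restrict to the identity on the negative eigenspace, and which are arbitrary (modulo $0\le\cdot\le1$) on $\operatorname{Ker}H$. Using the self-adjointness discussion preceding the lemma, $H$ is self-adjoint with essential spectrum $[0,\infty)$, so its negative spectrum is discrete, equal to $\{-\mu_j\}_{j=1}^{J_-}$ with $L^2$-normalized eigenfunctions $\phi_j^-\in H^1$. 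Write $H=H_+-H_-$, where $H_+:=H\,\mathbf 1_{H>0}\ge0$, $H_-:=\sum_{j=1}^{J_-}\mu_j|\phi_j^-\rangle\langle\phi_j^-|\ge0$ (a bounded operator), and $H$ vanishes on $\operatorname{Ker}H=\operatorname{Ran}\mathbf 1_{H=0}$.

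First I would identify the infimum. For any admissible $\gamma$, from $\|\gamma\|_{\textup{op}}\le1$ one has $0\le\langle\phi_j^-,\gamma\phi_j^-\rangle\le1$, so $\Tr(H_-\gamma)=\sum_j\mu_j\langle\phi_j^-,\gamma\phi_j^-\rangle\le\sum_j\mu_j$ and $\Tr(H_+\gamma)\ge0$, hence
$$I_V(\gamma)=\Tr(H_+\gamma)-\Tr(H_-\gamma)\ge-\sum_{j=1}^{J_-}\mu_j.$$
Conversely, each truncation $\Pi_N^-:=\sum_{j=1}^N|\phi_j^-\rangle\langle\phi_j^-|$ is admissible (finite-rank with $H^1$ eigenfunctions, so in $\mathfrak{B}^1$) and $I_V(\Pi_N^-)=\sum_{j=1}^N\langle\phi_j^-,H\phi_j^-\rangle=-\sum_{j=1}^N\mu_j$, so $I_{V;min}\le-\sum_{j\le N}\mu_j$ for every $N$. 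Since $I_{V;min}=I_V(\mathcal{Q})$ is finite by Lemma \ref{lem5.1} and the kinetic energy inequality, this forces $\sum_j\mu_j<\infty$ and $I_{V;min}=-\sum_{j=1}^{J_-}\mu_j$.

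The lemma is won in the equality analysis. Let $\gamma_{min}$ be a minimizer. Since $H_-$ is bounded, $\Tr(H_-\gamma_{min})\le\sum_j\mu_j<\infty$, so $\Tr(H_+\gamma_{min})=I_{V;min}+\Tr(H_-\gamma_{min})$ is finite and the splitting $I_V(\gamma_{min})=\Tr(H_+\gamma_{min})-\Tr(H_-\gamma_{min})$ is unambiguous. Comparing with $I_{V;min}=-\sum_j\mu_j$ and using $\Tr(H_+\gamma_{min})\ge0$, $\Tr(H_-\gamma_{min})\le\sum_j\mu_j$, both inequalities saturate: $\Tr(H_+\gamma_{min})=0$ and $\Tr(H_-\gamma_{min})=\sum_j\mu_j$. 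Writing $\Tr(H_+\gamma_{min})=\|H_+^{1/2}\gamma_{min}^{1/2}\|_{\mathfrak{S}^2}^2$ gives $H_+^{1/2}\gamma_{min}^{1/2}=0$, hence $\operatorname{Ran}\gamma_{min}^{1/2}\subset\operatorname{Ker}H_+=\operatorname{Ran}\mathbf 1_{H\le0}$, i.e. $\mathbf 1_{H>0}\gamma_{min}=0$. From $\Tr(H_-\gamma_{min})=\sum_j\mu_j$ and $\langle\phi_j^-,\gamma_{min}\phi_j^-\rangle\le1$ we get $\langle\phi_j^-,\gamma_{min}\phi_j^-\rangle=1$ for every $j$; since $0\le1-\gamma_{min}$ this forces $(1-\gamma_{min})^{1/2}\phi_j^-=0$, so $\gamma_{min}\phi_j^-=\phi_j^-$. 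Thus $\operatorname{Ran}\Pi^-$ is spanned by eigenvectors of $\gamma_{min}$ with eigenvalue $1$, and self-adjointness makes $\operatorname{Ran}\Pi^-$ and its orthogonal complement $\gamma_{min}$-invariant, so $\gamma_{min}=\Pi^-+\mathcal{Q}_0$ with $\mathcal{Q}_0:=\gamma_{min}|_{(\operatorname{Ran}\Pi^-)^\perp}$, $0\le\mathcal{Q}_0\le1$. Intersecting $\operatorname{Ran}\mathcal{Q}_0\subset(\operatorname{Ran}\Pi^-)^\perp=\operatorname{Ran}\mathbf 1_{H\ge0}$ with $\operatorname{Ran}\mathcal{Q}_0\subset\operatorname{Ran}\mathbf 1_{H\le0}$ shows $\operatorname{Ran}\mathcal{Q}_0\subset\operatorname{Ran}\mathbf 1_{H=0}=\operatorname{Ker}(-\Delta-\rho_{\mathcal{Q}}^{q-1})$, so $\mathcal{Q}_0$ is self-adjoint, acts on the zero eigenspace, and may be zero; this is the claimed decomposition.

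The main obstacle is making the spectral/trace calculus rigorous for operators only in $\mathfrak{B}^1$ (not trace-class, possibly noncompact): concretely, that $\Tr(H_+\gamma)=\|H_+^{1/2}\gamma^{1/2}\|_{\mathfrak{S}^2}^2$ with $\operatorname{Ran}\gamma^{1/2}\subset\operatorname{Dom}H_+^{1/2}$, that $\int V\rho_\gamma=\Tr(V\gamma)$, and that $\operatorname{Ker}H_+^{1/2}=\operatorname{Ran}\mathbf 1_{H\le0}$ despite the spectrum of $H$ reaching the threshold $0$. I would handle this by first using Lemma \ref{density} to reduce to compactly supported smooth finite-rank nonnegative operators (for which $I_V$, $\rho_\gamma$ and all traces are honest finite sums), passing the lower bound and the identity $I_V(\gamma)=\Tr(H\gamma)$ to the limit, and by recording that the form domain satisfies $\operatorname{Dom}|H|^{1/2}=H^1(\mathbb{R}^d)$ throughout the admissible range of $q$ — which is exactly the content of the case-by-case self-adjoint realization of $-\Delta-\rho_{\mathcal{Q}}^{q-1}$ discussed just above the lemma, and is where the restrictions on $q$ (Kato--Rellich, KLMN, CLR regimes) enter.
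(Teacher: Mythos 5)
Your proof is correct and is essentially the same argument as the paper's: both are ``fill up the negative spectrum'' arguments built on the spectral decomposition of $H=-\Delta-\rho_{\mathcal{Q}}^{q-1}$, the admissibility of the truncated projections $\Pi_N^-$, and an equality analysis pinning $\gamma_{min}$ to be the identity on the negative eigenspace and zero on the positive part. The only organizational difference is minor: the paper first establishes $\Pi^-\in\mathfrak{B}^1$ directly via an $I_{2V}$ comparison trick and then shows $\Pi^-$ is a minimizer by writing $I_V(\gamma)=I_V(\Pi^-)+\Tr(H(\gamma-\Pi^-))$ and proving the remainder nonnegative on rank-one pieces, whereas you compute $I_{V;\min}=-\sum\mu_j$ directly from the truncations (with $\sum\mu_j<\infty$ forced by Lemma \ref{lem5.1}), run the equality analysis on $\Tr(H_\pm\gamma_{min})$, and get $\Pi^-\in\mathfrak{B}^1$ a posteriori from the block decomposition of $\gamma_{min}$; the trace-class and form-domain technicalities you flag at the end are handled in the paper exactly as you propose, via Lemma \ref{density}.
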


\begin{proof}
We claim that $\Pi^-\in\mathfrak{B}^1$ (so it is admissible for the variational problem \eqref{relative energy}). Indeed, if $J_-<\infty$, the claim is obvious. We assume that $J_-=\infty$, and consider $\Pi_n^-=\sum_{j=1}^n |\phi_j^-\rangle\langle\phi_j^-|$. Since $\{\phi_j^-\}_{j=1}^\infty$ is an orthonormal set in $L^2$, $\Pi_n^-\to\Pi^-$ as $n\to\infty$ in the strong operator topology. We also observe that by \eqref{EL for negative e.f.},
$$I_V(\Pi_n^-)=\Tr(|\nabla|\Pi_n^-|\nabla|)-\int_{\R^d}V\rho_{\Pi_n^-}dx=\sum_{j=1}^n \|\nabla\phi_j^-\|_{L^2}^2-\langle V\phi_j^-, \phi_j^-\rangle_{L^2}=-\sum_{j=1}^n\mu_j <0.$$
Therefore, we have
\begin{align*}
\|\Pi_n^-\|_{\dot{\mathfrak{H}}^1}&=\Tr(|\nabla|\Pi_n^-|\nabla|)\\
&<\int_{\R^d}V\rho_{\Pi_n^-}dx+\left(\int_{\R^d}V\rho_{\Pi_n^-}dx-\Tr(|\nabla|\Pi_n^-|\nabla|)\right)=-I_{2V}(\Pi_n^-)\\
&\leq-I_{2V;min}<\infty,
\end{align*}
where in the last step, the bound for $-I_{2V;min}$ follows from Lemma \ref{lem5.1} with different scaling. Together with the strong convergence $\Pi_n^-\to\Pi^-$ in the operator norm, the uniform $\dot{\mathfrak{H}}^1$-boundedness implies that $\|\Pi^-\|_{\dot{\mathfrak{H}}^1}<\infty$. Therefore, we prove the claim.

Next, we show that $\Pi^-$ is a minimizer. Given a smooth finite-rank operator $\g\in\mathfrak{B}^1$ such that $0\leq\g\leq 1$, we write 
\begin{equation}\label{lemma 5.2 proof 1}
I_V(\gamma)=\Tr(|\nabla|\g|\nabla|-V\g)=I_V(\Pi^-)+\Tr\left(|\nabla|(\g-\Pi^-)|\nabla|-V(\g-\Pi^-)\right).
\end{equation}
We claim that if $\tilde{\gamma}\in\mathfrak{B}^1$ and $-\Pi^-\leq\tilde{\g}\leq 1-\Pi^-$, then
\begin{equation}\label{lemma 5.2 proof 2}
\Tr\left(|\nabla|\tilde{\g}|\nabla|-V\tilde{\g}\right)\geq 0.
\end{equation}
Indeed, by density (Lemma \ref{density}) and linearity, it suffices to show \eqref{lemma 5.2 proof 2} for one-particle projections of the form $-|\phi\rangle\langle\phi|$ or $|\psi\rangle\langle\psi|$. By the assumption $-\Pi^-\leq\tilde{\g}\leq 1-\Pi^-$, $\phi$ must be contained in $\textup{span}\{\phi_j^-\}_{j=1}^{J_-}$, while $\psi$ should be orthogonal to $\textup{span}\{\phi_j^-\}_{j=1}^{J_-}$. Therefore, it follows that
\begin{equation}\label{lemma 5.2 proof 3}
\Tr\left(|\nabla|(-|\phi\rangle\langle\phi|)|\nabla|-V(-|\phi\rangle\langle\phi|)\right)=-\left\{\|\nabla\phi\|_{L^2}^2-\int_{\mathbb{R}^d}V|\phi|^2 dx\right\}> 0
\end{equation}
while
\begin{equation}\label{lemma 5.2 proof 4}
\Tr\left(|\nabla|(|\psi\rangle\langle\psi|)|\nabla|-V(|\psi\rangle\langle\psi|)\right)=\|\nabla\psi\|_{L^2}^2-\int_{\mathbb{R}^d}V|\psi|^2 dx\geq 0.
\end{equation}
Thus, the claim \eqref{lemma 5.2 proof 2} is proved. Coming back to the identity \eqref{lemma 5.2 proof 1}, by the claim \eqref{lemma 5.2 proof 2}, we get $I_V(\gamma)\geq I_V(\Pi^-)$. Then, by density again, $I_{V;min}\geq I_V(\Pi^-)$ and so $\Pi^-$ is a minimizer.

Suppose that $\gamma_{min}$ is another minimizer. Then, $\mathcal{Q}_0:=\gamma_{min}-\Pi^-$ satisfies
$$I_{V;min}=I_V(\gamma_{min})=I_V(\Pi^-)+I_V(\mathcal{Q}_0)=I_{V;min}+I_V(\mathcal{Q}_0)$$
and consequently,
$$I_V(\mathcal{Q}_0)=\Tr(|\nabla|\mathcal{Q}_0|\nabla|)-\int_{\R^d}V\rho_{\mathcal{Q}_0}dx=0.$$
Therefore, we conclude that $\mathcal{Q}_0$ must act on the eigenspace associated with the zero eigenvalue, because otherwise by \eqref{lemma 5.2 proof 3} and \eqref{lemma 5.2 proof 4}, $I_V(\mathcal{Q}_0)>0$ (contradiction!).
\end{proof}

\begin{proof}[Proof of Theorem \ref{main theorem} $(2)$ and $(3)$] $ $\\
Since $L^2(\mathbb{R}^d)$ is separable, the operator $\mathcal{Q}_0$ in Lemma \ref{lem5.2} (if it is non-zero) can be written as $\mathcal{Q}_0=\sum_{k=1}^{K_0}|\phi_k^0\rangle\langle\phi_k^0|$ for some $L^2$-orthogonal set $\{\phi_k^0\}_{k=1}^{K_0}\subset\textup{Ker}(-\Delta-\rho_{\mathcal{Q}}^{q-1})$. Thus, combining Lemma \ref{lem5.1} and \ref{lem5.2}, we prove Theorem \ref{main theorem} $(2)$. The Euler-Lagrange equations \eqref{EL} and \eqref{EL} are derived from \eqref{EL for negative e.f.} and $\phi_k^0\in\textup{Ker}(-\Delta-\rho_{\mathcal{Q}}^{q-1})$.
\end{proof}

\vspace{10pt}

\section{Properties of an extremizer: Proof of Theorem \ref{main theorem} (4)-(6)}

We again fix the extremizer $\mathcal{Q}$ in Proposition \ref{existence of a maximizer} that satisfies the properties in Theorem Theorem \ref{main theorem} (1)-(3). In this section, we show additional properties in Theorem \ref{main theorem} (4)-(6). Throughout this section, for notational convenience, combining two sums in \eqref{spectral decomposition for Q}, we write
$$\mathcal{Q}=\sum_{j=1}^J|\phi_j\rangle\langle\phi_j|$$
for some $J\in\mathbb{N}\cup\{\infty\}$, where $\{\phi_j\}_{j=1}^J$ is an orthogonal set in $L^2$, and each $\phi_j$ is a $H^1$-weak solution to
\begin{equation}\label{EL for all}
(-\Delta-\rho_{\mathcal{Q}}^{q-1})\phi_j=-\mu_j\phi_j\quad\textup{with }\mu_j\geq 0.
\end{equation}

By construction, we only have a bound on $\textup{Tr}\sqrt{-\Delta}\mathcal{Q}\sqrt{-\Delta}$. However, using \eqref{EL for all}, we can upgrade this trivial regularity.

\begin{proof}[Proof of Theorem \ref{main theorem} $(4)$]
For $\epsilon, R>0$, let $P=P_{\epsilon, R}$ be a frequencies truncation defined by $\widehat{Pf}(\xi)=\mathbf{1}_{\epsilon\leq|\xi|\leq R}\hat{f}(\xi)$. We observe that for any $\alpha\in\mathbb{R}$,
\begin{equation}\label{truncation trick}
\begin{aligned}
\|(-\Delta+\mu_j)P\phi_j\|_{\dot{H}^\alpha}^2&=\langle\phi_j, (-\Delta+\mu_j)(-\Delta+\mu_j)(-\Delta)^\alpha P^2\phi_j\rangle_{L^2}\\
&=\langle \rho_{\mathcal{Q}}^{q-1}\phi_j, (-\Delta+\mu_j)(-\Delta)^\alpha P^2\phi_j\rangle_{L^2}\\
&=\langle (-\Delta+\mu_j)(-\Delta)^{\alpha}P^2(\rho_{\mathcal{Q}}^{q-1}\phi_j), \phi_j\rangle_{L^2}\\
&=\langle (-\Delta)^\alpha P^2(\rho_{\mathcal{Q}}^{q-1}\phi_j), \rho_{\mathcal{Q}}^{q-1}\phi_j\rangle_{L^2}\\
&=\|P(\rho_{\mathcal{Q}}^{q-1}\phi_j)\|_{\dot{H}^\alpha}^2,
\end{aligned}
\end{equation}
since $\phi_j$ is a $H^1$-weak solution to \eqref{EL for all}. By the observation, if $d=1,2$, then
\begin{align*}
\sum \|P\phi_j\|_{\dot{H}^2}^2&\leq\sum \|(-\Delta+\mu_j)P\phi_j\|_{L^2}^2=\sum \|P(\rho_{\mathcal{Q}}^{q-1}\phi_j)\|_{L^2}^2\\
&\leq\sum \|\rho_{\mathcal{Q}}^{q-1}\phi_j\|_{L^2}^2=\sum \int_{\mathbb{R}^d}\rho_{\mathcal{Q}}^{2(q-1)}|\phi_j|^2 dx=\|\rho_{\mathcal{Q}}\|_{L^{2q-1}}^{2q-1}<\infty,
\end{align*}
where the kinetic energy inequality is used in the last step. Sending $\epsilon\to 0$ and $R\to\infty$, by the dominated convergence theorem, we prove that $\textup{Tr}(-\Delta)\mathcal{Q}(-\Delta)=\sum\|\phi_j\|_{\dot{H}^2}^2<\infty$.

Consider the case $d\geq 3$. Suppose that $1\leq s<2$, and choose $\delta>0$ such that $q=\frac{d-\delta}{d-2}$. Then, by \eqref{truncation trick} and the Sobolev and H\"older inequalities,
\begin{align*}
\sum \|P\phi_j\|_{\dot{H}^{s+\delta}}^2&\leq\sum \|(-\Delta+\mu_j)P\phi_j\|_{\dot{H}^{s+\delta-2}}^2=\sum \|P(\rho_{\mathcal{Q}}^{q-1}\phi_j)\|_{\dot{H}^{s+\delta-2}}^2\\
&\lesssim\sum \|\rho_{\mathcal{Q}}^{q-1}\phi_j\|_{L^{\frac{2d}{d+4-2(s+\delta)}}}^2\leq\|\rho_{\mathcal{Q}}\|_{L^{\frac{d}{d-2}}}^{2(q-1)}\sum \|\phi_j\|_{L^{\frac{2d}{d-2s}}}^2\\
&\lesssim \|\rho_{\mathcal{Q}}\|_{L^{\frac{d}{d-2}}}^{2(q-1)}\sum \|\phi_j\|_{\dot{H}^s}^2\lesssim \sum \|\phi_j\|_{\dot{H}^s}^2.
\end{align*}
Thus, by the dominated convergence theorem again, we obtain 
$$\textup{Tr}|\nabla|^{s+\delta}\mathcal{Q}|\nabla|^{s+\delta}\lesssim\textup{Tr}|\nabla|^{s}\mathcal{Q}|\nabla|^{s}.$$
Iterating from $s=1$ finitely many times (but choosing smaller $\delta$ in the last step), we prove that $\textup{Tr}(-\Delta)\mathcal{Q}(-\Delta)<\infty$.
\end{proof}

Next we show the Pohozaev identities.

\begin{proof}[Proof of Theorem \ref{main theorem} (5)]
Taking the inner product of the equation \eqref{EL for all} with $\phi_j$ and then summing in $j$, we obtain the identity
\begin{equation}\label{Po1}
\textup{Tr}\sqrt{-\Delta}\mathcal{Q}\sqrt{-\Delta}-\|\rho_{\mathcal{Q}}\|_{L^q(\mathbb{R}^d)}^q+\sum \mu_j=0.
\end{equation}
Let $\chi$ be a compactly smooth function $\chi$ such that $\chi(x)=\frac{|x|^2}{2}$ for $|x|\leq 1$ and $\chi=0$ for $|x|\geq 2$, and define $\chi_R(x)=R^2\chi(\frac{x}{R})$. Then, taking the inner product of the equation \eqref{EL for all} with $\nabla\chi_R\cdot \nabla\phi_j$ and splitting the three terms in the equation, we write
	\begin{align*}
	0&=-\textup{Re}\sum \langle \nabla\chi_R\cdot \nabla\phi_j, \Delta\phi_j\rangle_{L^2}-\textup{Re}\sum \langle \nabla\chi_R\cdot \nabla\phi_j, \rho_{\mathcal{Q}}^{q-1}\phi_j\rangle_{L^2}\\
	&\quad+\textup{Re}\sum \mu_j\langle \nabla\chi_R\cdot \nabla\phi_j, \phi_j\rangle_{L^2}\\
	&=:I_{R}+{II}_{R}+{III}_{R}.
\end{align*}
Here, Theorem \ref{main theorem} (4) assures summability of the first term $I_R$. For $I_R$, by integration by parts, we write
\begin{align*}
&\sum_{m,n=1}^d\textup{Re}\int_{\mathbb{R}^d} (\partial_{x_m}\partial_{x_n}\chi_R)\partial_{x_m}\phi_j\overline{\partial_{x_n}\phi_j} dx+\sum_{m=1}^d\textup{Re}\int_{\mathbb{R}^d}(\partial_{x_m}\chi_R)\nabla\partial_{x_m}\phi_j\cdot\overline{\nabla\phi_j} dx\\
&=\sum_{m,n=1}^d\textup{Re}\int_{\mathbb{R}^d} (\partial_{x_m}\partial_{x_n}\chi_R)\partial_{x_m}\phi_j\overline{\partial_{x_n}\phi_j} dx+\frac{1}{2}\sum_{m=1}^d\int_{\mathbb{R}^d} (\partial_{x_m}\chi_R)\partial_{x_m}(|\nabla\phi_j|^2) dx\\
&=\sum_{m,n=1}^d\textup{Re}\int_{\mathbb{R}^d} (\partial_{x_m}\partial_{x_n}\chi_R)\partial_{x_m}\phi_j\overline{\partial_{x_n}\phi_j} dx-\frac{1}{2}\int_{\mathbb{R}^d} (\Delta\chi_R)(|\nabla\phi_j|^2) dx.
\end{align*}
Next, summing in $j$ and taking the limit $R\to\infty$ with the dominated convergence theorem, we prove that
$$I_R\to -\frac{d-2}{2}\sum\|\nabla\phi_j\|_{L^2}^2=-\frac{d-2}{2}\textup{Tr}\sqrt{-\Delta}\mathcal{Q}\sqrt{-\Delta}$$
as $R\to \infty$. For $II_R$ and ${III}_{R}$, by integration by parts,
\begin{align*}
{II}_{R}&=-\frac{1}{2q}\int_{\mathbb{R}^d} \nabla\chi_R\cdot\nabla(\rho_{\mathcal{Q}}^q) dx=\frac{1}{2q}\int_{\mathbb{R}^d} (\Delta\chi_R) \rho_{\mathcal{Q}}^qdx\to \frac{d}{2q}\|\rho_{\mathcal{Q}}\|_{L^q}^q,\\
{III}_{R}&=\sum\frac{\mu_j}{2} \int_{\mathbb{R}^d} \nabla\chi_R \cdot\nabla|\phi_j|^2 dx=\sum-\frac{\mu_j}{2} \int_{\mathbb{R}^d} (\Delta\chi_R) |\phi_j|^2 dx\to -\frac{d}{2}\sum \mu_j
\end{align*}
as $R\to \infty$. Collecting all, we get
\begin{equation}\label{Po2}
(d-2)\textup{Tr}\sqrt{-\Delta}\mathcal{Q}\sqrt{-\Delta}-\frac{d}{q}\|\rho_{\mathcal{Q}}\|_{L^q}^q+d\sum_{j=1}^\infty\mu_j=0.
\end{equation}
Finally, solving the system of equations \eqref{Po1} and \eqref{Po2} for $\textup{Tr}\sqrt{-\Delta}\mathcal{Q}\sqrt{-\Delta}$ and $\|\rho_{\mathcal{Q}}\|_{L^q}^q$, we prove the Pohozaev identities \eqref{Po}.
\end{proof}

From the construction, we only have boundedness of $\|\phi_j\|_{L^2}$ (from $\|\mathcal{Q}\|_{\textup{op}}\leq 1$) but no summability a priori-ly. However, if we further assume that $d\geq 3$ and $q>\frac{d^2+2d+4}{d^2}$, we can upgrade summability to be traceable.

\begin{proof}[Proof of Theorem \ref{main theorem} (6)]
Let $V=\rho_{\mathcal{Q}}^{q-1}$. By the assumption ($\frac{d^2+2d+4}{d^2}<q<\frac{d}{d-2}$), we have $\frac{d+2}{d}<\frac{d(q-1)}{2}<\frac{d}{d-2}$. Thus, by the kinetic energy inequality \eqref{eq:LT}, the potential function $-V$ is contained in $L^{\frac{d}{2}}$. Thus, the number $J_-$ of negative eigenvalues for the Schr\"odinger operator $(-\Delta-V)$ is finite.

Choose $\delta>0$ such that $q=\frac{d^2+(d+2)(2+\delta)}{d^2}$. Then, we have the bound $\|V\|_{L^{\frac{d}{2+\delta}}}=\|\rho_{\mathcal{Q}}\|_{L^{\frac{d+2}{d}}}^{q-1}<\infty$. Therefore, by the equation \eqref{EL for all} and the Sobolev and H\"older inequalities, 
	\begin{align*}
	\| |\nabla|^{s-\delta}\phi_j\|_{L^2}&=\||\nabla|^{s-\delta}(-\Delta+\mu_j)^{-1}(V\phi_j)\|_{L^2}\\
	&\lesssim\|V\phi_j\|_{L^{\frac{2d}{d+4-2s+2\delta}}} \leq \|V\|_{L^{\frac{d}{2+\delta}}}\N{\phi_j}_{L^{\frac{2d}{d-2s}}}\\
	&\lesssim\N{|\nabla|^s \phi_j}_{L^2},
	\end{align*}
where the implicit constant does not depends on $j$ on $s\in[\delta,1]$. Iterating this inequality from $s=1$, we obtain $\N{\phi_j}_{L^2}\lesssim \N{\nabla\phi_j}_{L^2}$. Thus, squaring and summing in $j$, we conclude that $\Tr(\mathcal{Q})=\sum\N{\phi_j}_{L^2}^2\lesssim \sum \N{\nabla\phi_j}_{L^2}^2=\|\mathcal{Q}\|_{\dot{\frak{H}}^1}<\infty$.
\end{proof}

\vspace{10pt}

\section{Global versus blow-up dichotomy: Proof of Theorem \ref{Dichotomy}}
Finally, coming back to the PDE problem \eqref{CNLS} (equivalently \eqref{CNLS'}), we prove our main result (Theorem \ref{Dichotomy}). 

\subsection{Local theory}\label{sec: local theory}
To begin with, we prepare a local well-posedness theory defining suitable function spaces. For a sequences of functions $\vec{\Phi}=\{\phi_j\}_{j=1}^\infty$, which are not necessarily mutually orthogonal, we define the Sobolev norm by 
$$\|\vec{\Phi}\|_{\vec{H}^1}:=\left\{\sum_{j=1}^\infty \|\phi_j\|_{H^1(\mathbb{R}^3)}^2\right\}^{1/2}.$$
Note that if $\phi_j$'s are mutually orthogonal, then by identification
$$\textup{a sequence }\vec{\Phi}=\{\phi_j\}_{j=1}^\infty \leftrightarrow\textup{an non-negative operator }\gamma=\sum_{j=1}^\infty |\phi_j\rangle\langle \phi_j|,$$
we have $\|\vec{\Phi}\|_{\vec{H}^1}^2=\textup{Tr}\sqrt{1-\Delta}\gamma\sqrt{1-\Delta}=\|\gamma\|_{\mathfrak{H}^1}$. For a time-dependent sequence $\vec{\Phi}(t)=\{\phi_j(t)\}_{j=1}^\infty$ and a time interval $I\subset\mathbb{R}$, we define the vector-valued Strichartz norm by
$$\|\vec{\Phi}(t)\|_{\vec{S}^1(I)}:=\left\{\sum_{j=1}^\infty \|\phi_j(t)\|_{S^1(I)}^2\right\}^{1/2},$$
where 
$$\|u\|_{S^1(I)}:=\|u\|_{C_t(I;H_x^1(\mathbb{R}^3))\cap L_t^2(I;W_x^{1, 6}(\mathbb{R}^3))}.$$
Let $\vec{H}^1$ (or $\vec{S}^1(I)$, respectively) be the collection of sequences having finite $\vec{H}^1$-norm (or the $\vec{S}^1(I)$-norm, respectively), which is a complete metric space.

We establish local well-posedness of the system of NLS \eqref{CNLS} without assuming the orthogonality condition.
\begin{proposition}[Local well-posedness for CNLS \eqref{CNLS}]\label{LWP}
\ 
\begin{enumerate}
\item For every initial data $\vec{\Phi}_0\in\vec{H}^1$, there exist a time interval $I\subset\mathbb{R}$ and a unique strong solution $\vec{\Phi}(t)=\{\phi_j(t)\}_{j=1}^\infty\in \vec{S}^1(I)$ to CNLS \eqref{CNLS}, i.e.,
$$\phi_j(t)=e^{it\Delta}\phi_{j,0}-i\int_0^t e^{i(t-s)\Delta}(\rho\phi_j)(s)ds\quad\forall j\in\mathbb{N},$$
where $\rho=\sum_{j=1}^\infty|\phi_j|^2$.
\item Let $(-T_{min},T_{max})$ be the maximal interval of existence. If $T_{min}$ (or $T_{max}<\infty$, respectively), then $\|\vec{\Phi}(t)\|_{\vec{H}^1}\to\infty$ as $t\nearrow T_{max}$ (or as $t\searrow -T_{min}$, respectively).
\item For any closed interval $I\subset(-T_{min},T_{max})$, the data-to-solution map $\vec{\Phi}_0\mapsto \vec{\Phi}(t)$ is continuous from $\vec{H}^1$ to $\vec{S}^1(I)$.
\end{enumerate}
\end{proposition}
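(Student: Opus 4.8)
The plan is a contraction-mapping argument in the vector-valued Strichartz space $\vec{S}^1(I)$, treating the total density $\rho=\sum_k|\phi_k|^2$ as a single scalar factor whose space-time norms are controlled by $\|\vec{\Phi}\|_{\vec{S}^1(I)}^2$. First I would set up the solution map $\mathcal{F}(\vec{\Phi})=\{\mathcal{F}_j(\vec{\Phi})\}_{j=1}^\infty$,
\[\mathcal{F}_j(\vec{\Phi})(t)=e^{it\Delta}\phi_{j,0}-i\int_0^t e^{i(t-s)\Delta}\big(\rho\,\phi_j\big)(s)\,ds,\qquad\rho=\sum_{k=1}^\infty|\phi_k|^2,\]
and run a fixed point on the ball of radius $2C\|\vec{\Phi}_0\|_{\vec{H}^1}$ in $\vec{S}^1(I)$ for $|I|$ small. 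Using the inhomogeneous Strichartz estimate together with the energy bound $\big\|\int_0^t e^{i(t-s)\Delta}F(s)\,ds\big\|_{S^1(I)}\lesssim\|F\|_{L^1_t(I;H^1_x)}$, the whole matter reduces to estimating $\sum_j\|\rho\,\phi_j\|_{L^1_t(I;H^1_x)}^2$. Distributing the derivative by the Leibniz rule produces the two pieces $\rho\,\nabla\phi_j$ and $(\nabla\rho)\phi_j$ with $\nabla\rho=2\sum_k\operatorname{Re}(\bar\phi_k\nabla\phi_k)$; each is handled by H\"older in space, placing $\rho$ in $L^\infty_t L^3_x$, $\nabla\rho$ in $L^2_t L^3_x$, $\nabla\phi_j$ in $L^2_t L^6_x$ and $\phi_j$ in $L^\infty_t L^6_x$, and paying a factor $|I|^{1/2}$ by H\"older in time.

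The step I expect to require the most care is making sure the infinite particle sum defining $\rho$ costs no summability in $j$; this is settled by Cauchy--Schwarz in the particle index $k$. Indeed, by Sobolev embedding,
\[\|\rho\|_{L^\infty_t L^3_x(I)}\le\sum_k\|\phi_k\|_{L^\infty_t L^6_x(I)}^2\lesssim\sum_k\|\phi_k\|_{C_t(I;H^1_x)}^2\le\|\vec{\Phi}\|_{\vec{S}^1(I)}^2,\]
and likewise $\|\nabla\rho\|_{L^2_t L^3_x(I)}\le\sum_k\|\phi_k\|_{L^\infty_t L^6_x(I)}\|\nabla\phi_k\|_{L^2_t L^6_x(I)}\lesssim\|\vec{\Phi}\|_{\vec{S}^1(I)}^2$, the $\ell^1$-in-$k$ sums being precisely $\ell^2$-norms squared. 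Summing in $j$ then gives $\|\mathcal{F}(\vec{\Phi})\|_{\vec{S}^1(I)}\le C\|\vec{\Phi}_0\|_{\vec{H}^1}+C|I|^{1/2}\|\vec{\Phi}\|_{\vec{S}^1(I)}^3$. The Lipschitz estimate follows the same template: writing $\rho_{\vec{\Phi}}\phi_j-\rho_{\vec{\Psi}}\psi_j=(\rho_{\vec{\Phi}}-\rho_{\vec{\Psi}})\phi_j+\rho_{\vec{\Psi}}(\phi_j-\psi_j)$ and $\rho_{\vec{\Phi}}-\rho_{\vec{\Psi}}=\sum_k\big((\phi_k-\psi_k)\bar\phi_k+\psi_k(\bar\phi_k-\bar\psi_k)\big)$, Cauchy--Schwarz in $k$ yields $\|\mathcal{F}(\vec{\Phi})-\mathcal{F}(\vec{\Psi})\|_{\vec{S}^1(I)}\le C|I|^{1/2}\big(\|\vec{\Phi}\|_{\vec{S}^1(I)}^2+\|\vec{\Psi}\|_{\vec{S}^1(I)}^2\big)\|\vec{\Phi}-\vec{\Psi}\|_{\vec{S}^1(I)}$. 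Choosing $|I|$ small in a way that depends only on $\|\vec{\Phi}_0\|_{\vec{H}^1}$ then closes the contraction and proves part (1).

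Because this local existence time depends only on $\|\vec{\Phi}_0\|_{\vec{H}^1}$ (the 3d cubic problem being $H^1$-subcritical), part (2) is the standard continuation argument: if $T_{max}<\infty$ while $\sup_{0\le t<T_{max}}\|\vec{\Phi}(t)\|_{\vec{H}^1}<\infty$, one restarts the local construction at times approaching $T_{max}$ with a uniform time step and extends the solution beyond $T_{max}$, a contradiction (and symmetrically for $T_{min}$). For part (3), given a closed interval $I\subset(-T_{min},T_{max})$, I would partition $I$ into finitely many short subintervals on each of which the Lipschitz estimate above applies to the pair $\vec{\Phi}_n,\vec{\Phi}$ (using that $\|\vec{\Phi}_n\|_{\vec{S}^1}$ stays bounded once $\vec{\Phi}_{0,n}\to\vec{\Phi}_0$ in $\vec{H}^1$), obtain $\|\vec{\Phi}_n-\vec{\Phi}\|_{\vec{S}^1}\lesssim\|\vec{\Phi}_{0,n}-\vec{\Phi}_0\|_{\vec{H}^1}$ on the first subinterval, and propagate this bound across $I$. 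Apart from the nonlinear estimate, whose only real subtlety is the $\ell^1$-in-$k$ versus $\ell^2$-in-$j$ bookkeeping just described, every ingredient here is routine.
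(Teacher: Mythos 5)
Your proof is correct and follows essentially the same route as the paper: a contraction in $\vec{S}^1(I)$ where the only real point is that Cauchy--Schwarz in the particle index $k$ reduces $\rho$ and $\nabla\rho$ to squares of the $\ell^2$-summed Strichartz norms. The only cosmetic difference is that you dualize through the $L^1_t H^1_x$ endpoint whereas the paper uses $L^2_t W^{1,6/5}_x$ followed by H\"older in time; both close with the same $T^{1/2}$ gain, and parts (2)--(3) follow by the standard continuation/stability bookkeeping you describe.
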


\begin{remark}
For CNLS \eqref{CNLS'}, Proposition \ref{LWP} corresponds to local well-posedness in $\mathfrak{H}^1$. It could be possible to upgrade the local theory in a weaker summable space using Strichartz estimates in \cite{FLLS14, FS, CHP1, CHP2, BHLNS}. However, in this paper, we do not pursue to find the optimal (or least) summable space. 
\end{remark}

The main tool to prove local well-posedness is the following standard Strichartz estimates.
\begin{lemma}[Strichartz estimates]\label{Strichartz estimates}
\begin{align*}
\|e^{it\Delta}u_0\|_{C_t(I;L_x^2(\mathbb{R}^3))\cap L_t^2(I;L_x^{6}(\mathbb{R}^3))}&\lesssim \|u_0\|_{L^2(\mathbb{R}^3)},\\
\left\|\int_0^t e^{i(t-s)\Delta}F(s)ds\right\|_{C_t(I;L_x^2(\mathbb{R}^3))\cap L_t^2(I;L_x^{6}(\mathbb{R}^3))}&\lesssim \|F\|_{L_t^1(I;L_x^2(\mathbb{R}^3))\cup L_t^2(I;L_x^{6/5}(\mathbb{R}^3))}.
\end{align*}
\end{lemma}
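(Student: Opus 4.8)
The plan is to deduce both inequalities from the pointwise dispersive bound for the free Schr\"odinger group together with the $L^2$-unitarity, keeping in mind that $(q,r)=(2,6)$ is the \emph{endpoint} admissible pair in dimension three (it satisfies $\tfrac{2}{q}+\tfrac{3}{r}=\tfrac{3}{2}$ with $q=2$). First I would record the explicit kernel $(e^{it\Delta}f)(x)=(4\pi it)^{-3/2}\int_{\mathbb{R}^3}e^{i|x-y|^2/4t}f(y)\,dy$, which yields $\|e^{it\Delta}f\|_{L^\infty}\lesssim|t|^{-3/2}\|f\|_{L^1}$; interpolating with $\|e^{it\Delta}f\|_{L^2}=\|f\|_{L^2}$ gives $\|e^{it\Delta}f\|_{L^r}\lesssim|t|^{-3(\frac12-\frac1r)}\|f\|_{L^{r'}}$ for $2\le r\le\infty$. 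At $r=6$ the decay exponent is exactly $1$, so the convolution kernel $|t-s|^{-1}$ appearing below fails to be locally integrable, which is the source of all the difficulty.

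The $C_tL^2_x$ halves of both estimates are the easy part: the homogeneous one is just unitarity together with strong continuity of $t\mapsto e^{it\Delta}$ on $L^2$, and for the inhomogeneous one $\big\|\int_0^t e^{i(t-s)\Delta}F(s)\,ds\big\|_{L^2_x}\le\int_I\|F(s)\|_{L^2}\,ds=\|F\|_{L^1_tL^2_x}$ by the triangle inequality and unitarity. For the $L^2_tL^6_x$ half of the homogeneous estimate I would use the standard duality ($TT^\ast$) reduction, which makes $\|e^{it\Delta}f\|_{L^2_tL^6_x}\lesssim\|f\|_{L^2}$ equivalent to the untruncated estimate $\big\|\int_I e^{i(t-s)\Delta}G(s)\,ds\big\|_{L^2_tL^6_x}\lesssim\|G\|_{L^2_tL^{6/5}_x}$. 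For a non-endpoint pair this would follow at once from the dispersive bound and the Hardy--Littlewood--Sobolev inequality in the time variable; at $(2,6)$ it does not, and instead I would invoke the endpoint Strichartz theorem of Keel and Tao, whose proof dyadically decomposes the time integral and interpolates the resulting bilinear forms in Lorentz spaces in the spatial variable.

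For the $L^2_tL^6_x$ half of the inhomogeneous estimate I would split according to the two pieces of the input norm. When $F\in L^1_tL^2_x$, Minkowski's integral inequality (dropping the indicator $\mathbf{1}_{s<t}$, which only helps) together with the homogeneous estimate and unitarity gives
\[
\Big\|\int_0^t e^{i(t-s)\Delta}F(s)\,ds\Big\|_{L^2_tL^6_x}\le\int_I\big\|e^{it\Delta}\bigl(e^{-is\Delta}F(s)\bigr)\big\|_{L^2_tL^6_x}\,ds\lesssim\int_I\|F(s)\|_{L^2}\,ds.
\]
When $F\in L^2_tL^{6/5}_x$ one would like to pass from the untruncated estimate to the retarded one (the integral restricted to $s<t$) via the Christ--Kiselev lemma, but that lemma is unavailable here because it requires the two time exponents to be unequal while both are $2$; so the retarded endpoint inhomogeneous bound must also be taken from Keel and Tao, who establish it directly for the operator $F\mapsto\int_{s<t}e^{i(t-s)\Delta}F(s)\,ds$. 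Combining the two input cases and the two target norms completes the proof.

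The sole genuine obstacle is thus the endpoint character of the pair $(2,6)$: both the homogeneous bound and, more delicately, the retarded inhomogeneous bound with input in $L^2_tL^{6/5}_x$ lie outside the reach of the elementary Hardy--Littlewood--Sobolev/Christ--Kiselev argument that suffices away from the endpoint, and one is forced to rely on the bilinear real-interpolation method of Keel and Tao.
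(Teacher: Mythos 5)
The paper offers no proof of this lemma at all: it is introduced with the single sentence ``The main tool to prove local well-posedness is the following standard Strichartz estimates'' and then used as a black box. So there is nothing in the paper to compare your argument against; the authors are simply citing the (by now classical) Strichartz theory for the 3d Schr\"odinger propagator. That said, your sketch is correct and is the standard route one would take if forced to prove the lemma from scratch. You correctly identify that $(q,r)=(2,6)$ is the endpoint admissible pair in dimension three, so the dispersive bound gives exactly the non-integrable kernel $|t-s|^{-1}$; that the $C_tL^2_x$ bounds and the $L^1_tL^2_x\to L^2_tL^6_x$ bound are elementary (unitarity plus Minkowski plus the homogeneous estimate); that the homogeneous $L^2_tL^6_x$ bound requires the Keel--Tao bilinear interpolation rather than $TT^*$ plus Hardy--Littlewood--Sobolev; and, most importantly, that the Christ--Kiselev lemma cannot be used to pass from the untruncated dual estimate to the retarded operator $F\mapsto\int_{s<t}e^{i(t-s)\Delta}F(s)\,ds$ because both time Lebesgue exponents are $2$, so the retarded endpoint inhomogeneous bound must itself be taken from Keel and Tao. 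The only cosmetic remark is that the $\cup$ in the statement should be read as ``either of the two dual norms suffices,'' which is exactly how you treat it by splitting into the two input cases; your argument is consistent with the paper's usage of the lemma in the proof of Proposition 7.3.
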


\begin{proof}[Proof of Proposition \ref{LWP}]
Given initial data $\vec{\Phi}_0=\{\phi_{j,0}\}_{j=1}^\infty$, we define the nonlinear mapping $\vec{\Gamma}$ by
$$\vec{\Gamma}\left(\vec{\Phi}(t)\right)=\vec{\Gamma}_{\vec{\Phi}_0}\left(\vec{\Phi}(t)\right):=\Big\{\Gamma_j(\phi_j)(t)\Big\}_{j=1}^\infty,$$
where 
$$\Gamma_j\left(\phi_j(t)\right):=e^{it\Delta}\phi_{j,0}-i\int_0^t e^{i(t-s)\Delta}(\rho\phi_j)(s)ds.$$
Let $T>0$ be a small number to be specified later, and let $I=[-T,T]$. First, applying Strichartz estimates (Lemma \ref{Strichartz estimates}) to $\Gamma_j(\phi_j(t))$, we get
\begin{align*}
\|\Gamma_j(\phi_j(t))\|_{S^1(I)}&\lesssim \|\phi_{j,0}\|_{H^1(\mathbb{R}^3)}+\|(\rho\phi_j)(t)\|_{L_{t}^2(I;W_x^{1,6/5}(\mathbb{R}^3))}\\
&\lesssim \|\phi_{j,0}\|_{H^1(\mathbb{R}^3)}+T^{1/2}\|(\rho \phi_j)(t)\|_{C_{t}(I;W_x^{1,6/5}(\mathbb{R}^3))}.
\end{align*}
By the Leibniz rule, the H\"older inequality, the Sobolev inequality and Young's inequality, 
\begin{align*}
\|\rho \phi_j\|_{W^{1,6/5}(\mathbb{R}^3)}&\leq\|\rho \phi_j\|_{L^{6/5}(\mathbb{R}^3)}+\|(\nabla\rho) \phi_j\|_{L^{6/5}(\mathbb{R}^3)}+\|\rho(\nabla\phi_j)\|_{L^{6/5}(\mathbb{R}^3)}\\
&\leq\|\rho\|_{L^{3}(\mathbb{R}^3)}\|\phi_j\|_{L^{2}(\mathbb{R}^3)}+\|\nabla\rho\|_{L^{3/2}(\mathbb{R}^3)}\|\phi_j\|_{L^{6}(\mathbb{R}^3)}+\|\rho\|_{L^3(\mathbb{R}^3)}\|\nabla\phi_j\|_{L^2(\mathbb{R}^3)}\\
&\lesssim\|\nabla\rho\|_{L^{3/2}(\mathbb{R}^3)}\|\phi_j\|_{H^1(\mathbb{R}^3)},
\end{align*}
which is, by the definition of the density function, bounded by 
\begin{align*}
\left\{\sum\|\nabla|\phi_k|^2\|_{L^{3/2}(\mathbb{R}^3)}\right\}\|\phi_j\|_{H^1(\mathbb{R}^3)}&\leq2\left\{\sum \|\nabla\phi_k\|_{L^{2}(\mathbb{R}^3)}\|\phi_k\|_{L^{6}(\mathbb{R}^3)}\right\}\|\phi_j\|_{H^1(\mathbb{R}^3)}\\
&\lesssim \left\{\sum\|\phi_k\|_{H^1(\mathbb{R}^3)}^2\right\}\|\phi_j\|_{H^1(\mathbb{R}^3)}.
\end{align*}
Thus, we have
$$\|\Gamma_j(\phi_j(t))\|_{S^1(I)}\lesssim \|\phi_{j,0}\|_{H^1(\mathbb{R}^3)}+T^{1/2} \|\vec{\Phi}(t)\|_{\vec{S}^1(I)}^2\|\phi_j(t)\|_{S^1(I)}.$$
Taking the $\ell^2$-sum over $j$, we prove that 
$$\left\|\vec{\Gamma}\left(\vec{\Phi}(t)\right)\right\|_{\vec{S}^1(I)}\leq C\|\vec{\Phi}_0\|_{\vec{H}^1}+CT^{1/2}\|\vec{\Phi}(t)\|_{\vec{S}^1(I)}^3.$$
By the same way, one can show that 
\begin{align*}
&\left\|\vec{\Gamma}\left(\vec{\Phi}(t)\right)-\vec{\Gamma}\left(\vec{\Psi}(t)\right)\right\|_{\vec{S}^1(I)}\leq CT^{1/2}\left\{\|\vec{\Phi}(t)\|_{\vec{S}^1(I)}^2+\|\vec{\Psi}(t)\|_{\vec{S}^1(I)}^2\right\}\|\vec{\Phi}(t)-\vec{\Psi}(t)\|_{\vec{S}^1(I)}.
\end{align*}
Therefore, taking sufficiently small $T>0$ such that
$$CT^{1/2}\left(2C\|\vec{\Phi}_0\|_{\vec{H}^1}\right)^2\leq\frac{1}{4},$$
we prove that $\vec{\Gamma}$ is a contraction on the set
$$\left\{\vec{\Phi}(t)\in \vec{S}^1(I):\ \|\vec{\Phi}(t)\|_{\vec{S}^1(I)}\leq 2C\|\vec{\Phi}_0\|_{\vec{H}^1}\right\}.$$
Then, the proposition follows as a consequence of the contraction mapping principle.
\end{proof}

Next, we show conservation of orthogonality \eqref{orthogonality of waves} and the energy \eqref{energy}, which are equivalent to the conservation laws \eqref{conservation laws} in the operator form.
\begin{proposition}[Conservation laws]\label{prop: conservation laws}
Let $\vec{\Phi}(t)=\{\phi_j(t)\}_{j=1}^\infty\in C_{t}(I;\vec{H}^1)$ be the solution to \eqref{CNLS}, constructed in Proposition \ref{LWP}. Then, the orthogonality $\langle \phi_j, \phi_k\rangle_{L^2(\mathbb{R}^3)}=\delta_{jk}\|\phi_j\|_{L^2(\mathbb{R}^3)}^2$ and the energy $\mathcal{E}(\vec{\Phi})$ are conserved on the time interval $I$. 
\end{proposition}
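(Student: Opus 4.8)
The plan is to establish both conservation laws first for classical solutions and then to transfer them to arbitrary $\vec{H}^1$ solutions by approximation. First I would reduce to initial data $\vec{\Phi}_0$ having only finitely many nonzero components, each in the Schwartz class: such data are dense in $\vec{H}^1$, and by uniqueness the corresponding solution also has only finitely many nonzero components, so \eqref{CNLS} becomes a genuine finite system of nonlinear Schr\"odinger equations, for which $\vec{H}^N$ regularity persists throughout any closed $I\subset(-T_{min},T_{max})$ (the $\vec{H}^1$ norm controlling the higher Sobolev norms via a Gronwall argument). For such a classical solution every manipulation below is legitimate and the $j$-sum is finite.

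For the orthogonality I would differentiate and use the equation in the form $i\partial_t\phi_j=(-\Delta-\rho)\phi_j$, where $\rho=\rho_{\vec\Phi}$ is real:
\begin{equation*}
\tfrac{d}{dt}\langle\phi_j,\phi_k\rangle=\langle\partial_t\phi_j,\phi_k\rangle+\langle\phi_j,\partial_t\phi_k\rangle= i\langle(-\Delta-\rho)\phi_j,\phi_k\rangle-i\langle\phi_j,(-\Delta-\rho)\phi_k\rangle=0,
\end{equation*}
since $-\Delta-\rho$ is symmetric; hence $\langle\phi_j(t),\phi_k(t)\rangle\equiv\langle\phi_{j,0},\phi_{k,0}\rangle$, so \eqref{orthogonality of waves} holds for all $t\in I$ once it holds at $t=0$. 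For the energy, with $\mathcal{E}(\vec\Phi)=\frac12\sum_j\N{\nabla\phi_j}_{L^2}^2-\frac14\int_{\R^3}\rho^2\,dx$, differentiating, integrating by parts in the kinetic term, and using $\partial_t\rho=2\sum_j\Rea(\overline{\phi_j}\,\partial_t\phi_j)$ together with the reality of $\rho$ to recombine the two pieces, one arrives at
\begin{equation*}
\tfrac{d}{dt}\mathcal{E}(\vec\Phi)=-\Rea\sum_j\langle\partial_t\phi_j,\Delta\phi_j+\rho\phi_j\rangle=-\Rea\sum_j\langle\partial_t\phi_j,-i\,\partial_t\phi_j\rangle=-\Rea\Big(-i\sum_j\N{\partial_t\phi_j}_{L^2}^2\Big)=0,
\end{equation*}
where I used $\Delta\phi_j+\rho\phi_j=-i\partial_t\phi_j$ and that the bracketed sum is real; here $\partial_t\phi_j\in L^2$ by the assumed smoothness and decay.

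Finally, for general $\vec\Phi_0\in\vec{H}^1$ I would pick smooth, finitely supported $\vec\Phi_0^{(m)}\to\vec\Phi_0$ in $\vec{H}^1$; for $m$ large the solutions $\vec\Phi^{(m)}$ exist on any fixed closed $I\subset(-T_{min},T_{max})$ and, by the continuous dependence in Proposition \ref{LWP}\,(3), converge to $\vec\Phi$ in $\vec{S}^1(I)\hookrightarrow C_t(I;\vec{H}^1)$. Passing to the limit yields $\langle\phi_j(t),\phi_k(t)\rangle=\lim_m\langle\phi_j^{(m)}(t),\phi_k^{(m)}(t)\rangle$ and $\mathcal{E}(\vec\Phi(t))=\lim_m\mathcal{E}(\vec\Phi^{(m)}(t))$, the latter using that $\mathcal{E}$ is locally Lipschitz on $\vec{H}^1$; this in turn follows from the multilinearity of $\mathcal{E}$ together with the bilinear bound $\N{\rho_{\vec\Phi}-\rho_{\vec\Psi}}_{L^2}\lesssim(\N{\vec\Phi}_{\vec{H}^1}+\N{\vec\Psi}_{\vec{H}^1})\N{\vec\Phi-\vec\Psi}_{\vec{H}^1}$, obtained by writing $\rho_{\vec\Phi}-\rho_{\vec\Psi}=\sum_j(\phi_j-\psi_j)\overline{\phi_j}+\psi_j\overline{(\phi_j-\psi_j)}$ and applying H\"older's inequality, the Cauchy--Schwarz inequality in $j$, and $H^1(\R^3)\hookrightarrow L^4(\R^3)$. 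Since each $\langle\phi_j^{(m)}(\cdot),\phi_k^{(m)}(\cdot)\rangle$ and each $\mathcal{E}(\vec\Phi^{(m)}(\cdot))$ is constant on $I$ by the previous step, so are the limits. The one point requiring genuine care --- the main obstacle --- is the persistence of regularity and the uniform-in-$m$ lifespan lower bound for the approximating solutions; both are standard for this $L^2$- and $H^1$-subcritical problem and follow from the contraction estimates in the proof of Proposition \ref{LWP} supplemented by the Gronwall bound on the $\vec{H}^N$ norms in terms of the $\vec{H}^1$ norm.
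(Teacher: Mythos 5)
Your proposal is correct and follows essentially the same route as the paper: derive the two identities by differentiating in $t$, substituting the equation, and integrating by parts for smooth solutions, then transfer to $\vec{H}^1$ solutions by a density/persistence-of-regularity argument (which you helpfully spell out — reduction to finitely many Schwartz components, continuous dependence from Proposition~\ref{LWP}(3), and local Lipschitz continuity of $\mathcal{E}$ on $\vec{H}^1$ via the bilinear density bound — where the paper merely cites the standard argument).
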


\begin{proof}
We prove the conservation laws for smooth solutions substituting $\partial_t\phi_j$ by the equation \eqref{CNLS} and doing integration by parts, 
\begin{align*}
\frac{d}{dt}\langle \phi_j, \phi_k\rangle_{L^2(\mathbb{R}^3)}&=\langle \partial_t\phi_j, \phi_k\rangle_{L^2(\mathbb{R}^3)}+\langle \phi_j, \partial_t\phi_k\rangle_{L^2(\mathbb{R}^3)}\\
&=i\langle (\Delta+\rho)\phi_j, \phi_k\rangle_{L^2(\mathbb{R}^3)}-i\langle \phi_j, (\Delta+\rho)\phi_k\rangle_{L^2(\mathbb{R}^3)}=0
\end{align*}
and
\begin{align*}
\frac{d}{dt}\mathcal{E}(\vec{\Phi})&=\textup{Re}\sum\int_{\mathbb{R}^3} \nabla\phi_j\cdot \overline{\nabla\partial_t\phi_j}-\rho \phi_j\overline{\partial_t\phi_j} dx\\
&=-\textup{Re}\sum\int_{\mathbb{R}^3} (\Delta\phi_j+\rho \phi_j)\overline{\partial_t\phi_j} dx\\
&=-\textup{Im}\sum\int_{\mathbb{R}^3}|\Delta\phi_j+\rho \phi_j|^2 dx=0.
\end{align*}
The conservation laws for $\vec{H}^1$-strong solutions can be justified by the standard persistence of regularity argument. 
\end{proof}

By the (total) mass conservation, the blow-up criterion in Proposition \ref{LWP} (2) can be rewritten as follows.

\begin{corollary}[Blow-up criterion]\label{blow-up criterion}
Let $\vec{\Phi}(t)\in C_{t}(I;\vec{H}^1)$ be the solution to \eqref{CNLS}, constructed in Proposition \ref{LWP}, and let $(-T_{min},T_{max})$ be its maximal interval of existence. If $T_{\textup{max}}$ (or $T_{\textup{min}}$, respectively) is finite, then  $\sum_{j=1}^\infty\|\nabla\phi_j(t)\|_{L^2(\mathbb{R}^3)}^2\to \infty$ as $t\nearrow T_{\textup{max}}$ (or as $t\searrow -T_{\textup{min}}$, respectively).
\end{corollary}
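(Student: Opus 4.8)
The plan is to read off the claim directly from the blow-up alternative in Proposition~\ref{LWP}(2) together with conservation of the total mass. First I would recall the splitting
$$\|\vec{\Phi}(t)\|_{\vec{H}^1}^2=\sum_{j=1}^\infty\|\phi_j(t)\|_{H^1(\mathbb{R}^3)}^2=\sum_{j=1}^\infty\|\phi_j(t)\|_{L^2(\mathbb{R}^3)}^2+\sum_{j=1}^\infty\|\nabla\phi_j(t)\|_{L^2(\mathbb{R}^3)}^2,$$
so that the $\vec{H}^1$-norm decomposes into a mass part and a kinetic part. Hence it suffices to show that the mass part $N(\vec{\Phi}(t))=\sum_{j=1}^\infty\|\phi_j(t)\|_{L^2(\mathbb{R}^3)}^2$ stays bounded (in fact constant) along the flow; once this is known, $\|\vec{\Phi}(t)\|_{\vec{H}^1}\to\infty$ forces $\sum_{j=1}^\infty\|\nabla\phi_j(t)\|_{L^2(\mathbb{R}^3)}^2\to\infty$.

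Second, I would establish conservation of the total mass. Taking $j=k$ in the computation carried out in the proof of Proposition~\ref{prop: conservation laws} gives $\frac{d}{dt}\|\phi_j(t)\|_{L^2(\mathbb{R}^3)}^2=2\,\Ima\langle(\Delta+\rho)\phi_j,\phi_j\rangle_{L^2(\mathbb{R}^3)}=0$ for each $j$, since $\Ima\langle\Delta\phi_j,\phi_j\rangle=0$ by integration by parts and the potential term vanishes because $\rho$ is real-valued. Thus every individual mass $\|\phi_j(t)\|_{L^2(\mathbb{R}^3)}^2$ is conserved; summing over $j$ and using $\vec{\Phi}_0\in\vec{H}^1$ (so that $N(\vec{\Phi}_0)<\infty$), the series $N(\vec{\Phi}(t))$ equals the finite constant $N(\vec{\Phi}_0)$ on $I$. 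As for the energy, this identity is proved first for smooth solutions and then transferred to $\vec{H}^1$-solutions via the persistence-of-regularity and continuous-dependence statements of Proposition~\ref{LWP}.

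Finally I would combine the two ingredients. If $T_{max}<\infty$, Proposition~\ref{LWP}(2) yields $\|\vec{\Phi}(t)\|_{\vec{H}^1}\to\infty$ as $t\nearrow T_{max}$, and subtracting the constant $N(\vec{\Phi}_0)$ from $\|\vec{\Phi}(t)\|_{\vec{H}^1}^2$ gives $\sum_{j=1}^\infty\|\nabla\phi_j(t)\|_{L^2(\mathbb{R}^3)}^2\to\infty$; the case $T_{min}<\infty$ is identical. There is essentially no analytic obstacle in this corollary — the only point requiring a little care is the interchange of the infinite sum with the time differentiation when establishing mass conservation for merely $\vec{H}^1$-regular solutions, and this is precisely the approximation argument already used for the energy in Proposition~\ref{prop: conservation laws}.
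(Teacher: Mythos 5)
Your argument is correct and follows essentially the paper's intended route: the corollary is stated as an immediate consequence of Proposition~\ref{prop: conservation laws} (mass conservation) together with the $\vec{H}^1$ blow-up alternative in Proposition~\ref{LWP}(2), which is exactly the decomposition $\|\vec{\Phi}(t)\|_{\vec{H}^1}^2 = N(\vec{\Phi}(t)) + \sum_j\|\nabla\phi_j(t)\|_{L^2}^2$ you use. One minor remark: since Proposition~\ref{prop: conservation laws} conserves each individual $\|\phi_j(t)\|_{L^2}^2$ (the $j=k$ case of the orthogonality conservation), the total mass $N(\vec{\Phi}(t))=N(\vec{\Phi}_0)$ follows by summing the termwise-constant quantities, so the sum-versus-derivative interchange you flag at the end is actually not needed; also the sign in $\frac{d}{dt}\|\phi_j\|_{L^2}^2 = -2\Ima\langle(\Delta+\rho)\phi_j,\phi_j\rangle$ is opposite to what you wrote, though this is harmless since the quantity vanishes.
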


The following virial identity is the key ingredient to prove the blow-up part in our main theorem. For notational convenience, we here state it in the operator form.

\begin{lemma}[Virial indentity]\label{virial identity}
If $\gamma(t)$ is a non-negative solution to CNLS \eqref{CNLS'} having finite variance $\int_{\R^3}|x|^2\rho_{\gamma(t)} dx<\infty$, then
\begin{equation}\label{eq:Virial'}
\partial_t^2\int_{\R^3}|x|^2\rho_{\gamma(t)} dx=8\|\gamma\|_{\dot{\mathfrak{H}}^1}-6\N{\rho_\gamma}_{L^2}^2.
\end{equation}
\end{lemma}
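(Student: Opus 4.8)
The plan is to pass to the wave-function formulation $\gamma(t)=\sum_j|\phi_j(t)\rangle\langle\phi_j(t)|$, so that
$$\int_{\R^3}|x|^2\rho_{\gamma(t)}\,dx=\sum_j\int_{\R^3}|x|^2|\phi_j(t,x)|^2\,dx,$$
with each $\phi_j$ solving \eqref{CNLS}, and to run the classical Glassey virial computation term by term in $j$ before resumming. To keep the weight integrable, I would fix $a\in C_c^\infty(\R^3)$ with $0\le a(y)\le|y|^2$, $a(y)=|y|^2$ on $\{|y|\le1\}$ and $a\equiv0$ on $\{|y|\ge2\}$, set $a_R(x):=R^2a(x/R)$ and $V_R(t):=\int_{\R^3}a_R\rho_{\gamma(t)}\,dx=\sum_j\int a_R|\phi_j|^2\,dx$, prove the identity for $V_R$, and then let $R\to\infty$; since $a_R(x)\le|x|^2$ and $a_R(x)\to|x|^2$ pointwise, the finite-variance hypothesis and dominated convergence give $V_R(t)\to\int|x|^2\rho_{\gamma(t)}\,dx$, while all error terms below are supported in $\{R\le|x|\le2R\}$ and vanish in the limit.

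For the first derivative, substituting $\partial_t\phi_j=i(\Delta\phi_j+\rho\phi_j)$ and integrating by parts, the nonlinear contribution drops out (it enters only through the imaginary part of the real quantity $\int a_R\rho|\phi_j|^2$), leaving
$$V_R'(t)=2\sum_j\operatorname{Im}\int_{\R^3}\overline{\phi_j}\,\nabla a_R\cdot\nabla\phi_j\,dx.$$
Differentiating once more and substituting the equation again, the kinetic terms, after integration by parts, combine into the Morawetz bilinear form $4\sum_{m,n}\operatorname{Re}\int(\partial_m\partial_na_R)\,\partial_m\phi_j\,\overline{\partial_n\phi_j}\,dx-\int(\Delta^2a_R)|\phi_j|^2\,dx$, while the nonlinear terms, using $\sum_j|\phi_j|^2=\rho$ and one integration by parts, collapse to $-\int(\Delta a_R)\rho^2\,dx$; thus
$$V_R''(t)=\sum_j\Big(4\sum_{m,n}\operatorname{Re}\!\int(\partial_m\partial_na_R)\partial_m\phi_j\,\overline{\partial_n\phi_j}-\int(\Delta^2a_R)|\phi_j|^2\Big)-\int(\Delta a_R)\rho^2 .$$
On $\{|x|\le R\}$ one has $a_R(x)=|x|^2$, hence $\partial_m\partial_na_R=2\delta_{mn}$, $\Delta a_R=6$ and $\Delta^2a_R=0$, so the main parts are $8\sum_j\|\nabla\phi_j\|_{L^2}^2=8\|\gamma\|_{\dot{\mathfrak{H}}^1}$ and $-6\int\rho^2=-6\N{\rho_\gamma}_{L^2}^2$, whereas the remainders are bounded by $C\sum_j\int_{|x|\ge R}(|\nabla\phi_j|^2+R^{-2}|\phi_j|^2)+C\int_{|x|\ge R}\rho^2$. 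Letting $R\to\infty$ and using $\|\gamma\|_{\dot{\mathfrak{H}}^1}<\infty$, $N(\gamma)<\infty$ and $\rho_\gamma\in L^2(\R^3)$ (so that each of these tails tends to $0$ by dominated convergence), we obtain \eqref{eq:Virial'}.

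The two integrations by parts are routine; the only genuine obstacle is justifying the two time-differentiations and the interchange of $\tfrac{d^2}{dt^2}$ with the sum over $j$ at merely $\mathfrak{H}^1$ regularity, since $\partial_t\phi_j=i(\Delta\phi_j+\rho\phi_j)$ lives only in $H^{-1}$. I would handle this exactly as in the proof of the Pohozaev identities (Theorem \ref{main theorem} (5)): first establish \eqref{eq:Virial'} for smooth, rapidly decaying, finite-rank approximating solutions---where every manipulation is classical and all sums are finite---and then pass to the limit using the continuity of the data-to-solution map (Proposition \ref{LWP} (3)), the conservation of $\mathcal{E}$ and of the operator norm (which furnish uniform bounds on $\|\gamma(t)\|_{\dot{\mathfrak{H}}^1}$ and $\|\rho_{\gamma(t)}\|_{L^2}$ on compact time intervals), and dominated convergence; equivalently, one may first read \eqref{eq:Virial'} as an identity of distributions in $t$ and then upgrade it to the classical sense once the right-hand side is seen to be continuous in $t$.
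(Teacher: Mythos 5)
Your proposal is correct and follows essentially the same route as the paper: pass to the wave-function formulation $\gamma(t)=\sum_j|\phi_j(t)\rangle\langle\phi_j(t)|$, run the classical Glassey virial computation for each $\phi_j$, use $\sum_j|\phi_j|^2=\rho_\gamma$ to collapse the nonlinear term to $-6\|\rho_\gamma\|_{L^2}^2$, and resum. The paper presents the computation formally (with the untruncated weight $|x|^2$ and a standard appeal to persistence of regularity), whereas you make the truncation $a_R$ and the approximation by smooth finite-rank solutions explicit; this is only a matter of added rigor, not a different argument.
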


\begin{proof}
By the equation \eqref{CNLS},
$$\partial_t|\phi_j|^2=2\textup{Re}\left(\partial_t\phi_j\overline{\phi_j}\right)=2\textup{Re}\left(i(\Delta\phi_j+\rho\phi_j)\overline{\phi_j}\right)=-2\textup{Im}\left(\Delta\phi_j\overline{\phi_j}\right)=-2\nabla\cdot \textup{Im}(\nabla\phi_j\overline{\phi_j}).$$
Thus, by integration by parts, we obtain
$$\partial_t\int_{\R^3}|x|^2\rho_{\g(t)} dx=\sum\int_{\R^3}|x|^2\partial_t|\phi_j|^2dx=4\sum\int_{\R^3}x\cdot\textup{Im}(\nabla\phi_j\overline{\phi_j}) dx.$$
Similarly, we have
\begin{align*}
\partial_t \left\{\int_{\R^3}x\cdot\textup{Im}(\nabla\phi_j\overline{\phi_j}) dx\right\}&=\Ima\int_{\R^3}(x\cdot\nabla\partial_t\phi_j)\overline{\phi_j}+(x\cdot\nabla\phi_j)\overline{\partial_t\phi_j}dx\\
&=-\Ima\int_{\R^3}3(\partial_t\phi_j)\overline{\phi_j}dx+2\overline{(x\cdot\nabla\phi_j)}\partial_t\phi_jdx\\
&=-\Rea\int_{\R^3}3(\Delta\phi_j+\rho\phi_j)\overline{\phi_j}+2\overline{(x\cdot\nabla\phi_j)}(\Delta\phi_j+\rho\phi_j)dx\\
&=\int_{\R^3}2|\nabla\phi_j|^2-3\rho_\g|\phi_j|^2+x\rho_\g \cdot\nabla(|\phi_j|^2)dx.
\end{align*}
Summing over $j$ and integrating by parts again, we prove that 
$$\partial_{t}^2\int_{\R^3}|x|^2\rho_{\g(t)}dx=8\int_{\R^3}|\nabla\phi_j|^2dx-6\int_{\mathbb{R}^3}\rho_\g^2dx=8\|\gamma\|_{\dot{\mathfrak{H}}^1}-6\|\rho_\gamma\|_{L^2}^2.$$
\end{proof}

\subsection{Proof of Theorem \ref{Dichotomy}}

First, we will prove the upper bound versus lower bound dichotomy employing the sharp kinetic energy inequality \eqref{sharp kinetic energy inequality}, the Pohozaev identities
\begin{equation}\label{Po'}
\|\mathcal{Q}\|_{\dot{\mathfrak{H}}^1}=3\sum_{j=1}^{J^-} \mu_j;\quad\|\rho_{\mathcal{Q}}\|_{L^2(\mathbb{R}^3)}^2=4\sum_{j=1}^{J^-} \mu_j.
\end{equation}
(from \eqref{Po} in Theorem \ref{main theorem}) and the virial identity (Lemma \ref{virial identity}). In one case in the dichotomy, global well-posedness immediately follows from the upper bound. On the other hand, in the other case in the dichotomy, we can show finite time blow-up by the standard Glassey argument \cite{Glassey} assuming finite variance in addition. 

Given $\gamma=\sum_{j=1}^\infty |\phi_j\rangle\langle\phi_j|$ with $\|\gamma\|_{\textup{op}}=1$, we denote
$$\alpha:=\|\gamma\|_{\dot{\mathfrak{H}}^1}^{1/2}.$$
Then, by the kinetic energy inequality \eqref{sharp kinetic energy inequality} with normalization $\|\mathcal{Q}\|_{\textup{op}}=1$, we have
\begin{equation}\label{eq:PLTf}
\begin{aligned}
\E(\g)&\geq\frac{1}{2}\|\gamma\|_{\dot{\mathfrak{H}}^1}-\frac{\|\rho_{\mathcal{Q}}\|_{L^2(\mathbb{R}^3)}^2}{4\|\mathcal{Q}\|_{\textup{op}}^{1/2}\|\mathcal{Q}\|_{\dot{\mathfrak{H}}^1}^{3/2}}\|\gamma\|_{\textup{op}}^{1/2}\|\gamma\|_{\dot{\mathfrak{H}}^1}^{3/2}\\
&=\frac{1}{2}\|\gamma\|_{\dot{\mathfrak{H}}^1}-\frac{1}{3\|\mathcal{Q}\|_{\dot{\mathfrak{H}}^1}^{1/2}}\|\gamma\|_{\dot{\mathfrak{H}}^1}^{3/2}=f\left(\alpha\right),
\end{aligned}
\end{equation}
where the Pohozaev identities \eqref{Po'} are used in the first identity, and
$$f(\alpha):=\frac{1}{2}\alpha^2-\frac{1}{3\|\mathcal{Q}\|_{\dot{\mathfrak{H}}^1}^{1/2}}\alpha^3.$$
Observe that on $[0,\infty)$, the function $f(\alpha)$ has a local minimum at 0 and the global maximum at $\alpha_*=\|\mathcal{Q}\|_{\dot{\mathfrak{H}}^1}^{1/2}$. Moreover, by the Pohozaev identities \eqref{Po'} again, 
$$f(\alpha_*)=\E(\mathcal{Q}).$$
See Figure \ref{fig1}.

Let $\gamma(t)\in\mathfrak{H}^1$ be a solution to \eqref{CNLS'}. Then, by \eqref{eq:PLTf}, the conservation laws and the assumption, we have
$$f(\alpha(t))\leq \E(\g(t))=\E(\g_0)<\E(\mathcal{Q})=f(\alpha_*).$$
Thus, by continuity of the function $\alpha(t)$, it follows that either
\begin{equation}\label{dichotomy case 1}
\alpha(t)=\|\gamma(t)\|_{\dot{\mathfrak{H}}^1}^{1/2}\leq\alpha_*=\|\mathcal{Q}\|_{\dot{\mathfrak{H}}^1}^{1/2}
\end{equation}
on the interval $I_{max}$, or
\begin{equation}\label{dichotomy case 2}
\alpha(t)=\|\gamma(t)\|_{\dot{\mathfrak{H}}^1}^{1/2}\geq \alpha_*=\|\mathcal{Q}\|_{\dot{\mathfrak{H}}^1}^{1/2}
\end{equation}
on the interval $I_{max}$.

In the former case \eqref{dichotomy case 1}, global well-posedness follows from the uniform bound on $\|\gamma(t)\|_{\dot{\mathfrak{H}}^1}$ and Corollary \ref{blow-up criterion}. In the latter case \eqref{dichotomy case 2}, if we further assume that 
$$\int_{\R^3}|x|^2\rho_{\g_0}dx<\infty,$$
then by the virial identity (Lemma \ref{virial identity}) and the energy conservation, we get 
$$\partial_{t}^2\int_{\R^3}|x|^2\rho_{\g}dx=8\|\gamma\|_{\dot{\mathfrak{H}}^1}-6\N{\rho_\gamma}_{L^2}^2=24\mathcal{E}(\gamma)-4\|\gamma\|_{\dot{\mathfrak{H}}^1}=24\mathcal{E}(\gamma_0)-4\|\gamma\|_{\dot{\mathfrak{H}}^1}.$$
Consequently, by \eqref{dichotomy case 2} and the Pohozaev identities \eqref{Po'}, 
$$\partial_{t}^2\int_{\R^3}|x|^2\rho_{\g}dx\leq24\mathcal{E}(\gamma_0)-4\|\mathcal{Q}\|_{\dot{\mathfrak{H}}^1}=24\left(\mathcal{E}(\gamma_0)-\mathcal{E}(\mathcal{Q})\right)<0.$$
Since the second derivative of the variance is bounded by a negative number uniformly in time, the maximal interval of existence $I_{max}$ must be finite. In other words, $\gamma(t)$ blows up in finite time, both in forward and backward.

\end{document}